  \newcommand{\beqa}{\begin{eqnarray}}
  \newcommand{\eeqa}{\end{eqnarray}}
  \newcommand{\R}{\mathbb{R}}
  \newcommand{\N}{\mathbb{N}}
  \newcommand{\SP}{\mathbb{S}}
  \newcommand{\si}{\sigma}
  \newcommand{\dd}{\mathrm{d}}
  \newcommand{\no}{\nonumber}
  \newcommand{\inttts}{\iiint\limits_{\R^3\times\R^3\times\SP^2}}
  \newcommand{\beas}{\begin{eqnarray*}}
  \newcommand{\eeas}{\end{eqnarray*}}
  \newcommand{\p}{\partial}
\newcommand{\ld}{\lambda}
\newcommand{\vp}{\varphi}
\newcommand{\vh}{\varrho}
\newcommand{\vep}{\varepsilon}
\newcommand{\og}{\omega}
\newcommand{\Og}{\Omega}
\newcommand{\sg}{\sigma}
\newcommand{\dt}{\delta}
\newcommand{\fr}{\frac}
\newcommand{\wt}{\widetilde}
\newcommand{\intt}{\int\!\!\!\!\int}
\newcommand{\inttt}{\int\!\!\!\!\int\!\!\!\!\int}
\newcommand{\mR}{{\mathbb R}}
\newcommand{\mN}{{\mathbb N}}
\newcommand{\mQ}{{\mathbb Q}}
\newcommand{\mS}{{\mathbb S}}
\newcommand{\bR}{{\mathbb R}^3 }
\newcommand{\bS}{{\mathbb S}^2 }
\newcommand{\bRR}{{\bR}\times{\bR}}
\newcommand{\bRS}{{\bR}\times {\mathbb S}^2 }
\newcommand{\bRRS}{{\bRR}\times{\mathbb S}^2 }
\newcommand{\la}{\langle}
\newcommand{\ra}{\rangle}
\newcommand{\bea}{\begin{eqnarray}}
\newcommand{\eea}{\end{eqnarray}}
\newcounter{thm}
\newtheorem{theorem}{Theorem}[section]
\newtheorem{proposition}[theorem]{Proposition }
\newtheorem{definition}[theorem]{Definition }
\newtheorem{lemma}[theorem]{Lemma}
\newtheorem{corollary}[theorem]{Corollary}
\newtheorem{remark}[theorem]{Remark }
\newenvironment{proof}{{\bf Proof. }}{$\hfill\Box$}
\newcounter{myequation}[section]
\newenvironment{myequation}{\stepcounter{myequation}\begin{equation}}{\end{equation}}
\newenvironment{myeqnarray}{\stepcounter{myequation}\begin{eqnarray}}{\end{eqnarray}}
\newcommand{\be}{\begin{myequation}}
\newcommand{\ee}{\end{myequation}}
\newcommand{\bes}{\begin{myeqnarray}}
\newcommand{\ees}{\end{myeqnarray}}
\newcommand{\bepf}{\begin{proof}}
\newcommand{\eepf}{\end{proof}}
\DeclareMathOperator*{\esssup}{ess\,sup}
  \author{Wenyi Li\, and\, Xuguang Lu}
  \title{Global existence of solutions of
the Boltzmann equation
for Bose-Einstein particles
with anisotropic initial data}
\begin{document}
\bibliographystyle{plain}
  \date{}
  \maketitle

{\bf Abstract} In this paper we prove the global in time existence and uniqueness of solutions of
the spatially homogeneous Boltzmann equation for Bose-Einstein particles for the hard sphere model for bounded
anisotropic initial data. The main idea of our proof is as follows: we first establish
an intermediate equation which is closely related to the original equation
and is relatively easily proven to have global in time and unique solutions, then we use
the multi-step iterations of the collision gain operator  to obtain
a desired uniform $L^{\infty}$-bound for the solution of the intermediate equation so that
it becomes the solution of the original equation.

{\bf Keywords} Boltzmann equation;
Bose-Einstein particles;
Global existence;
Iteration of collision operator;
Hard sphere model
\tableofcontents

\baselineskip 18.5pt
  \section{Introduction}

The Boltzmann equation for Bose-Einstein particles describes time evolution of dilute
Bose gases. Derivations of such quantum
Boltzmann equations can be found for instance in  \cite{Nordheim1928On}, \cite{Uehling1933Transport},
\cite{Chapman1939The}, \cite{d2005on}, \cite{erd2004on}.
 In this paper we consider the case of spatially homogeneous equation
and study the global in time existence and uniqueness of solutions for anisotropic initial data.
The spatially homogeneous Boltzmann equation for Bose-Einstein particles under consideration is given in $\si$-representation by
  \beqa\label{Equation1}
  \frac{\partial f}{\partial t}= \iint_{\R^3\times\SP^2}B(v-v_*,\si)\big[f'f'_{*}(1+\hbar^3 f+\hbar^3 f_{*})-ff_{*}(1+\hbar^3 f'+\hbar^3 f'_{*})\big]\dd \si\dd v_{*}
  \eeqa
  with $(t,v)\in [0,\infty)\times \R^3,$  where $\hbar>0$ is the Planck's constant, $f=f(t,v)$ is the density of the number of particles at time $t\in [0,\infty)$ with the velocity $v\in \R^3$,
  $f'=f(t,v')$, $f_{*}=f(t,v_*)$, $f'_{*}=f(t,v_{*}')$,  and $v,v_*$  and  $v',v_*'$  are velocities of two particles before and after their collision, where
  \beqa\label{sigma representation}
  v'=\fr{v+v_*}{2}+\fr{|v-v_*|}{2}\si,\qquad
  v'_{*}=\fr{v+v_*}{2}-\fr{|v-v_*|}{2}\si.
  \eeqa
 Recall that the equation (\ref{Equation1}) can also be written in $\og$-representation by
  \beqa
  \frac{\partial f}{\partial t}= \iint_{\R^3\times\SP^2}\wt{B}(v-v_*,\og)\big[f'f'_{*}(1+\hbar^3 f+\hbar^3 f_{*})-ff_{*}(1+\hbar^3 f'+\hbar^3 f'_{*})\big]\dd \og\dd v_{*}
  \eeqa where
 \beqa \label{omega representation1} && v'=v-\la v-v_*,\og\ra\og,\qquad
  v'_{*}=v+\la v-v_*,\og\ra \og \\
\label{omega representation2} &&\si={\bf n}-2\la{\bf n}, \og\ra\og,\quad
 \wt{B}(v-v_*,\og)=2|\la {\bf n}, \og\ra| B(v-v_*,\si) \eeqa
 ${\bf n}=\fr{v-v_*}{|v-v_*|}$, and $B(v-v_*,\si)$ is given in the equation (\ref{Equation1}) (see e.g.
Chapter 2.4 in \cite{Villani2002Chapter}).
In the rest part of this paper we will always use the $\si$-representation of the equation (\ref{Equation1}).

The function $B(v-v_*,\si)$ in the equation (\ref{Equation1}) is called the collision kernel which,
according to \cite{Benedetto2007classical},\cite{d2005on},\cite{erd2004on} for quantum Boltzmann equation, takes the form
  \beqa\label{collision kernel 1}
B(v-v_*,\sg)=\fr{|v-v_*|}{\hbar^4}\Big[\widehat{\Phi}\Big(\fr{|v-v'|}{\hbar}\Big)+
\widehat{\Phi}\Big(\fr{|v-v_*'|}{\hbar }\Big)\Big]^2
\eeqa
where the radially symmetric function $\widehat{\Phi}(|\xi|):=\widehat{\Phi}(\xi)=
\int_{\R^3}\Phi(x)e^{-{\rm i}\la \xi, x\ra}\dd x$ is the Fourier transform of an interacting potential ${\Phi}(x)=\Phi(|x|)$ which is assumed (as in many cases) to be radially symmetric.
From (\ref{collision kernel 1}) and (\ref{sigma representation}) one sees that $B(v-v_*,\si)$ is a function of $(|v-v_*|, |\la v-v_*,\si\ra|),$ i.e.
\beqa\label{about v-v*}
B(v-v_*,\si)=B_1(|v-v_*|,|\la v-v_*,\si\ra|),\quad (v,v_*,\si) \in{\mR}^3\times{\mR}^3\times\SP^2
\eeqa
with a nonnegative measurable function $B_1$ on $\R_{\ge 0}\times {\mR}_{\ge 0}$ determined by
$\widehat{\Phi}(|\cdot|)$.

Investigation experience shows that the most possible case for obtaining a global
solution for anisotropic initial data is the hard sphere model or the
asymptotically hard sphere model:
\beqa\label{BB} B(v-v_*,\si)\sim |v-v_*|\qquad {\rm when}\quad  |v-v_*| >> 1.\eeqa
According to (\ref{collision kernel 1}), the precise meaning of (\ref{BB}) is that the function $\widehat{\Phi}(|\xi|)$
satisfies the following condition:
\beqa\label{condition of Phi}
a_0 \fr{|v-v_*|^{\beta}}{1+|v-v_*|^{\beta}}
 \le
 \big(\widehat{\Phi}(|v-v'|)+
\widehat{\Phi}(|v-v_*'|)\big)^2
 \le b_0
\eeqa
where $\beta \ge 3$ and  $0<a_0\le b_0<\infty$ are constants independent of $\hbar$.
This includes a physical case where the interaction potential $\Phi$
is equal to a Dirac $\dt$-function $\dt(x)$ plus a small attractive
force's potential $-U(x)$:
\beqa \label{potential}\Phi(x)=\delta(x)-U(x), \qquad x\in \R^3. \eeqa For instance one may take
$$ U(x)=U(|x|)=\fr{1}{4\pi|x|}\exp(-|x|),\qquad x\in {\mathbb R}^3$$
which is Yukawa potential in $\R^3$; its Fourier transform is
 \beas
 \widehat{U}(\xi)=\widehat{U}(|\xi|)=\fr{1}{1+ |\xi|^2},\qquad \xi\in {\mathbb R}^3.
 \eeas
From (\ref{potential}) we have $\widehat{\Phi}(|\xi|)=1-\widehat{U}(\xi)=1-\fr{1}{1+ |\xi|^2}$
so that
\beas
 \fr{1}{8}
 \fr{|v-v_*|^4}{1+|v-v_*|^4}
 \le
 \big(\widehat{\Phi}(|v-v'|)+
\widehat{\Phi}(|v-v_*'|)\big)^2
 \le 4,
 \eeas
 which satisfies (\ref{condition of Phi}) with $\beta=4$, $a_0=\fr{1}{8}$ and $b_0=4$.
Of course if $U(x)=0$, i.e. if $\Phi(x)=\dt(x)$,  then $B(v-v_*,\sg)=\fr{1}{{\hbar}^4}|v-v_*|$
which corresponds to the hard sphere model.

To simplify notations we may set $\hbar =1$. That is, in the rest of this paper we need only to consider the
equation (\ref{Equation1}) for the scale normalized case $\hbar=1$.  In fact, by the scaling
transforms
\beqa\label{scaling transform}
\wt{f}(t,v)=\hbar^3 f(\hbar^3 t, v),\qquad f(t,v)=\hbar^{-3}\wt{f}(\hbar^{-3} t, v),\qquad
\Psi(|x|)=\hbar{\Phi}(|\hbar x|) \eeqa
we have $\widehat{\Psi}(|\xi|)=\widehat{\Psi}(\xi)=\fr{1}{{\hbar}^2}\widehat{\Phi}(\fr{\xi}{\hbar})=
\fr{1}{{\hbar}^2}\widehat{\Phi}(\fr{|\xi|}{\hbar})$ so that it is easily checked that $f$ is a solution of the equation (\ref{Equation1}) if and only if $\wt{f}$ is a
solution of the scale normalized equation:
\beqa\label{Equation}
\frac{\partial f}{\partial t}
= \iint_{\R^3\times\SP^2}B(v-v_*,\si)\big[f'f'_{*}(1+ f+ f_{*})-ff_{*}(1+ f'+ f'_{*})\big]\dd \si\dd v_{*}
\eeqa
with
\beqa\label{collision kernel}
B(v-v_*,\sg)=|v-v_*|\big(\widehat{\Psi}(|v-v'|)+
\widehat{\Psi}(|v-v_*'|)\big)^2.
\eeqa
Correspondingly,  the condition ({\ref{condition of Phi}}) is rewritten in terms of
$\widehat{\Psi}$ as
\beqa\label{lpbd}
\fr{a_0}{{\hbar}^4}\cdot \fr{|v-v_*|^{\beta}}{{\hbar}^{\beta}+|v-v_*|^{\beta}}
 \le
 \big(\widehat{\Psi}(|v-v'|)+
\widehat{\Psi}(|v-v_*'|)\big)^2
 \le \fr{b_0}{{\hbar}^4}.
\eeqa
Let us denote
\beqa\label{abh} a=\fr{a_0}{{\hbar}^4},\quad  b=\fr{b_0}{{\hbar}^4}, \quad {\rm and\,\, assume\,\,
that}\quad  0<\hbar\le 1.\eeqa
Then from (\ref{lpbd}) and (\ref{collision kernel}) we have
\beqa&&
a\fr{|v-v_*|^{\beta}}{1+|v-v_*|^{\beta}}
 \le
 \big(\widehat{\Psi}(|v-v'|)+
\widehat{\Psi}(|v-v_*'|)\big)^2
 \le b,\no\\
 &&\label{condition of kernel}
a \fr{|v-v_*|^{\beta}}{1+|v-v_*|^{\beta}}|v-v_*|
 \le
 B(v-v_*,\si)
 \le b|v-v_*|.
\eeqa

 One of the main difficulties in proving the global in time existence of solutions of
Eq.(\ref{Equation}) with general $L^1$ initial data is the divergence of the cubic term (see e.g. \cite{Lu2004On}):
\beqa\label{divergence}\sup_{f\ge 0,\, \|f\|_{L^1_2}\le 1}\iiint_{\R^3\times R^3\times\SP^2}B(v-v_*,\si)f(v)f(v_{*})f(v') \dd \si\dd v_{*}\dd v=\infty \eeqa
(here $\|\cdot\|_{L^p_s}$ is defined below).
Another main difficulty comes from the low temperature effect
which yields the Bose-Einstein condensation and that the regular part of the
 equilibrium (i.e. the Bose-Einstein distribution) is unbounded near the origin; these
 together with the result of convergence to equilibrium
(see \cite{Lu2005On}) imply that for low temperature there must be no such a global solution that is bounded on $[0, \infty)\times{\mR}^3$.

So far, basic results for Eq.(\ref{Equation}) on global in time existences, singular behavior, long time behavior, kinetics of Bose-Einstein condensation, etc. are all concerned with solutions for
isotropic initial data (which implies that the solutions are also isotropic), see for instance
\cite{Escobedo2003Homogeneous},\cite{M2008Singular},\cite{Escobedo2015Finite},\cite{Lu2005On},\cite{Lu2014The},\cite{Spohn2010Kinetics}.
In a recent work  \cite{Briant2016On}, Briant and Einav considered a wide class of bounded solutions of the equation (\ref{Equation}) and proved the local in time existence, uniqueness, stability  and
 moment production of solutions of Eq.(\ref{Equation}) without the isotropic assumption on the initial data.

 Our aim of this paper is to prove the global in time existence and uniqueness of
 solutions of Eq.(\ref{Equation}) for bounded anisotropic initial data $f_0$ satisfying  (\ref{condition}) (see Theorem \ref{main results} below). As mentioned above, this
 must belong to the case of high temperature.

In order to state our main result we need to introduce the definition of solutions of
Eq.(\ref{Equation}) and some related notations.

\subsection{Definition of solutions}
Our working function spaces
are the usual weighted Lebesgue $L^p$ spaces
$L^p_s(\R^3)$ defined by
\beas f\in L^p_s(\R^3)\,\Longleftrightarrow\,
\|f\|_{L^p_s}:=\Big(\int_{\R^3}\la v\ra^{sp} |f(v)|^{p} \dd v\Big)^{\fr{1}{p}}<\infty
\eeas
where $1\le p< \infty$ , $0\le s <\infty$ and $\la v\ra^{s}=(1+|v|^2)^{s/2}.$
For $p=\infty$, $0\le s<\infty$ we define
\beas
&&\|f\|_{L^{\infty}_s}=\esssup_{v\in{\bR}}\la v\ra^s |f(v)|\quad
{\rm and}\quad
f\in L^{\infty}_s(\R^3)\,\Longleftrightarrow\,
\|f\|_{L^{\infty}_s}<\infty.
\eeas
As usual let $Q(f)$ denote the collision integral in the right hand side of Eq.(\ref{Equation}):
\beqa\label{Q(f)}
&&Q(f)(v)=
\intt_{{\bRS}}B(v-v_*,\sg)\big[f'f_*'
(1+ f+ f_*)-ff_*
(1+ f'+f_*')\big]{\rm d}\sg{\rm d}v_*,
\quad  v\in {\mR}^3.
\eeqa
And we denote
$Q(f)(t,v):=Q(f(t,\cdot))(v)$  for the case $f=f(t,v)$.

\begin{definition}\label{definition of solution}
Given $0\le f_0\in L^1_2({\mR}^3)\cap C(\R^3)$.
We say that a nonnegative function $f\in C([0,\infty)\times{\R}^3)\cap
L^{\infty}([0,\infty); L^1_2(\R^3))$
is a solution to Eq.$(\ref{Equation})$
with the initial datum $f(0,\cdot)=f_0$,
if $f$ satisfies the following {\rm (i),(ii):}

{\rm (i)}
for any $v\in \R^3$, the function $t\mapsto f(t,v)$ belongs to $C^1([0,\infty))$,

{\rm (ii)} $f$ satisfies Eq.$(\ref{Equation})$ on $[0,\infty)\times \R^3$, i.e.
\beqa\label{diffequation}\fr{\p }{\p t}f(t,v)=Q(f)(t,v)\qquad \forall\, (t,v)\in [0,\infty)\times \R^3.\eeqa
Furthermore, if $f$ conserves the mass, momentum and energy, i.e. if
\beqa\label{MEconservation} \int_{\R^3}(1,
v,|v|^2)f(t,v){\rm d}v= \int_{\R^3}(1, v,|v|^2)
f_0(v){\rm d}v\qquad \forall\, t\in [0,\infty)\eeqa
then  $f$ is called a conservative solution.
\end{definition}

\subsection{Moments and kinetic temperature}

Moments $M_k(f)$ of order $k\in [0, s]$ for $0\le f\in L^1_s(\R^3)$
are defined by
\beas
M_k(f)=\int_{\R^3}|v|^k f(v) \dd v.
\eeas
For $0\le f\in L^1_2(\R^3)$,  $m M_0(f)$ and $\fr{m}{2}M_2(f)$ are the mass and
kinetic energy of a particle system per unit space volume, where $m$ is the mass of one particle.
Without confusion we also call $M_0(f)$ and $M_2(f)$ the mass and energy.
In this paper we always assume that initial data $0\le f_0\in L^1_2(\R^3)$ satisfy $M_0(f_0)>0$.
Denote $M_0=M_0(f_0),\, M_2=M_2(f_0)$. By the conservation of mass and energy, the kinetic temperature $\overline{T}$ of the particle system is defined by (see e.g.  Chapter 2 in \cite{Chapman1939The})
\beas
\overline{T}=\fr{1}{3k_{B}}\cdot \fr{m M_2}{M_0},
\eeas
and the critical temperature $\overline{T}_c$ corresponding to Eq.(\ref{Equation}) is given by (see e.g. \cite{Lu2005On} and references therein)
\beas
\overline{T}_{c}=\fr{m \zeta(5/2)}{2\pi k_{B}\zeta(3/2)}\Big(\fr{M_0}{\zeta(3/2)}\Big)^{2/3}
\eeas
where $k_B$ is the Boltzmann constant,
$\zeta(s)=\sum_{n=1}^{\infty}\fr{1}{n^s}\,(s>1)$ is the Riemann-Zeta function.
By calculation we have
\beqa\label{rate of Kinetic temperature}
\fr{\overline{T}}{\overline{T}_{c}}
=\fr{2\pi[\zeta(3/2)]^{5/3}}{3\zeta(5/2)}\cdot\fr{M_2}{{M_0}^{5/3}}.
\eeqa
As mentioned above, if $\overline{T}/\overline{T}_{c}\le 1$
(i.e. the case of low temperature), there is no bounded solution on $[0,\infty)\times{\mR}^3$.
The case of $\overline{T}/\overline{T}_{c}> 1$ but not too large is difficult.
While the case
 $\overline{T}/\overline{T}_{c}>> 1$ is easily proved to be necessary for having
 global bounded solutions for small initial data as concerned in this paper, see  {\bf Remark \ref{Remark 1.4}} below.

\subsection{Main result}
 Our main result of the paper is the following
 \begin{theorem}\label{main results} Suppose the collision kernel $B$ in $(\ref{collision kernel})$ satisfies $(\ref{condition of kernel})$ with $\beta\ge 3$.  Let $0\le f_0\in L^1_3({\bR})\cap L_{3}^{\infty}({\bR}) \cap C(\R^3)$ satisfies
 $\int_{\R^3}vf_{0}(v)\dd v=0, M_2(f_0)=\int_{\R^3}|v|^2f_{0}(v)\dd v>0,$
and
\beqa\label{condition}
     (\|f_0\|_{L^1}+ \|f_0\|_{L^{\infty}})
  \Big(\fr{\|f_0\|_{L_3^1}}{\min\{\|f_0\|_{L^1}, M_2(f_0)\}}\Big)^{4\beta}
  \le \fr{1}
  {2^{35\beta-11}}
  \fr{(4\beta+2)^{2\beta+1}}
  {(4\beta+4)^{2\beta+2}}\Big(\fr{a}{b}\Big)^{2(\beta+1)}
     \eeqa
$($which implies  $\overline{T}/\overline{T}_{c}>> 1)$.
     Then there exists a unique solution
     $f \in L^{\infty}([0,\infty); L^1_3({\bR})\cap L^{\infty}({\bR}) \cap C(\R^3))$ of
     Eq.$(\ref{Equation})$ with the initial datum $f(0,\cdot)=f_0$ and $f$ conserves  the mass, momentum and energy.

Moreover we have
\beqa\label{estimate infty}
\sup_{t\ge 0}{\|f(t)\|_{L^{\infty}}}
\le
    2^{35\beta}
    \Big(\fr{b}{a}\Big)^{2(\beta+1)}
    \fr{(2\beta+2)^{2\beta-2}}{(2\beta+3)^{2\beta-2}}
    \Big(\fr{\|f_0\|_{L_3^1}}{\min\{\|f_0\|_{L^1}, M_2(f_0)\}}\Big)^{4\beta}
   (\|f_0\|_{L^1}+ \|f_0\|_{L^{\infty}}).
\eeqa

\end{theorem}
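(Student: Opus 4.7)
The plan is to follow the roadmap laid out in the abstract. Write the collision integral as $Q(f)=Q_+(f)-L(f)\,f$, where the loss rate is
\[
L(f)(v)=\iint_{\bR\times\bS}B(v-v_*,\sg)\,f_*(1+f'+f_*')\,\dd\sg\,\dd v_*
\]
and $Q_+(f)$ is the quantum gain including the cubic factor $(1+f+f_*)$. The main difficulty, already flagged in equation (1.8), is that the cubic Bose contribution in $Q_+(f)$ is not controlled by low-order moments alone, so there is no hope of a purely $L^1$ a priori argument; one must extract smallness from the hypothesis (1.10) to tame it.

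I would first introduce an \emph{intermediate equation} obtained by either truncating the cubic density factor (e.g.\ replacing $(1+f+f_*)$ by $(1+f\wedge N+f_*\wedge N)$ for a provisional $N$ to be fixed later), or by formulating the equation in mild form and decoupling the cubic correction. For this intermediate equation, local existence and uniqueness in $C([0,T];L^1_3\cap L^\infty)$ follow by a standard Banach fixed point argument in a Duhamel setting, using the $L^\infty$-estimates of $Q_+$ on bounded-in-$L^\infty$ data. Parallel to this, I would establish the standard a priori control: conservation of mass, momentum and energy (which is built into the weak form), and propagation of the $L^1_3$ moment via a Povzner-type inequality for the hard-sphere kernel (using only the upper bound $B\le b|v-v_*|$). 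The quantity $\|f\|_{L^1_3}/\min\{\|f\|_{L^1},M_2\}$ appearing in (1.10) is thereby controlled uniformly in time by its initial value.

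The heart of the proof is the \emph{multi-step iteration of the gain operator}. Writing the mild form
\[
f(t,v)=f_0(v)\,e^{-\mathcal L(t,v)}+\int_0^t e^{-(\mathcal L(t,v)-\mathcal L(s,v))}\,Q_+(f)(s,v)\,\dd s,
\]
with $\mathcal L(t,v)=\int_0^t L(f)(\tau,v)\,\dd\tau$, and noting that the lower bound in (1.9) yields $L(f)(t,v)\ge c\,\la v\ra$ for a computable $c$ depending on $a$ and on the preserved moments $M_0,M_2$, one substitutes this identity back into the gain $Q_+(f)$ on the right-hand side repeatedly. After $k$ iterations the bound on $\|f(t)\|_{L^\infty}$ reduces to a sum of $k$-fold time integrals involving $Q_+^{(k)}(f_0)$ type expressions plus a tail term involving the full nonlinear gain. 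Using the exponential decay in $v$ coming from $e^{-\mathcal L}$ and the bilinear estimate $\|Q_+(f)\|_{L^\infty}\lesssim b\,\|f\|_{L^1_1}\|f\|_{L^\infty}(1+\|f\|_{L^\infty})$, each iteration step produces a factor proportional to $(\|f_0\|_{L^1}+\|f_0\|_{L^\infty})\cdot(\|f_0\|_{L^1_3}/\min\{\|f_0\|_{L^1},M_2\})^{\text{const}}\cdot(b/a)^{\text{const}}$. The smallness hypothesis (1.10) is precisely what is needed to make this factor $<1$, so the iterated remainder decays geometrically and yields the explicit uniform bound (1.11).

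The principal obstacle will be the bookkeeping in this iteration. Tracking the exponents $4\beta$, $2(\beta+1)$, etc.\ that appear in (1.10) and (1.11) requires an inductive estimate on each iteration step that specifies how the weight $\la v\ra^s$, the ratio $b/a$, and the normalized moment $\|f_0\|_{L^1_3}/\min\{\|f_0\|_{L^1},M_2\}$ combine at step $n$. Once the uniform $L^\infty$ bound is in hand, one uses it to choose the truncation level $N$ in the intermediate equation large enough that the truncation is never active, so the intermediate solution is in fact a solution of the original equation (1.7); the $C^1$-in-$t$ regularity of Definition 1.1 then follows from continuity of $Q_+$ and $L$ on bounded continuous data. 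Uniqueness comes from a Gronwall argument on $\|f-g\|_{L^1_2\cap L^\infty}$, again using the a priori $L^\infty$ control to linearize the cubic nonlinearity.
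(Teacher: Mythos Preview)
Your overall architecture---an intermediate equation with the cubic factor truncated at level $K$, a Duhamel representation, iteration of the gain, and finally choosing $K$ so that the a~priori bound falls below the truncation---is exactly the paper's strategy. But the mechanism you describe for closing the iteration has a genuine gap.

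You propose to control the tail of the iteration via the bilinear estimate $\|Q_+(f)\|_{L^\infty}\lesssim b\,\|f\|_{L^1_1}\|f\|_{L^\infty}(1+\|f\|_{L^\infty})$ and claim that the smallness hypothesis~(\ref{condition}) makes each step contract geometrically. This does not close. After $k$ Duhamel substitutions the remainder still contains nested expressions such as $Q^+\big(Q^+(f_{\tau_1},f_{\tau_1}),Q^+(f_{\tau_2},f_{\tau_2})\big)$ with the \emph{unknown} $f_{\tau_i}$, not $f_0$; your bilinear estimate reintroduces $\|f(\tau)\|_{L^\infty}$ at every step. The resulting per-step factor is essentially $(1+2K)\,(b/c)\,\|f_0\|_{L^1_1}$, where $c$ is the lower bound on $L(f)$; a scaling check with $f_0=\lambda g_0$ shows this factor is $O(1)$ as $\lambda\to 0$, so~(\ref{condition}) does \emph{not} make it small.

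The missing ingredient is a \emph{smoothing} property of the iterated gain (Proposition~\ref{most important} in the paper), obtained from a Carleman-type kernel representation:
\[
Q^+\big(f,Q^+(g,h)\big)(v)\;\le\; C\,b^2\,\|f\|_{L^1}^{1/3}\|f\|_{L^2}^{2/3}\|g\|_{L^1}\|h\|_{L^1}
\]
pointwise, with \emph{no} $L^\infty$ norm on the right, together with companion $L^p$ bounds. Because of this, the Duhamel iteration \emph{terminates} after three or four substitutions (Propositions~\ref{estimate 3}--\ref{continuity}): the deepest nested terms are bounded purely by $L^1$-moments of $f$ (conserved, or uniformly bounded by the $L^1_3$ estimate), with $L^\infty$ appearing only through $f_0$. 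One obtains a closed bound $Q^+(f_t,f_t)(v)\le A(f_0)\la v\ra+P(f_0)$ independent of $\sup_t\|f(t)\|_{L^\infty}$, and only \emph{then} is the smallness~(\ref{condition}) invoked to make $\|f_0\|_{L^\infty}+(1+2K)\int \wt E\cdot(A\la v\ra+P)\le K$ with the specific choice $K=\tfrac{1}{4\beta+2}$.

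A secondary point: Povzner alone yields only polynomial-in-$t$ growth of $\|f(t)\|_{L^1_3}$. The \emph{uniform-in-time} bound the argument needs (Proposition~\ref{L3}) comes from balancing the Povzner gain against the loss contribution, giving a Riccati inequality $\tfrac{d}{dt}\|f\|_{L^1_3}\le -c_1\|f\|_{L^1_3}^2+c_2\|f\|_{L^1_3}$; this is what produces the constant $C_2=\max\{1,C_1\}\|f_0\|_{L^1_3}$ that feeds into the lower bound for $L_K(f)$ and hence into all the exponents in~(\ref{condition}) and~(\ref{estimate infty}).
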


 \begin{remark}
{\rm It is easily seen that for any $0\le g_0\in L^1_3({\bR})\cap L_{3}^{\infty}({\bR}) \cap C(\R^3)$ and any constant $\ld>0$, the initial datum $f_0:=\ld g_0$ satisfies the condition (\ref{condition})
 when $\ld>0$ is small enough. Thus the initial data satisfying the condition (\ref{condition}) exist extensively.
Besides, the assumption of zero mean-velocity $\int_{\R^3}vf_{0}(v)\dd v=0$ does not
lose generality. In fact, for the case $v_0:=\fr{1}{M_0}\int_{\R^3}vf_{0}(v)\dd v\neq 0$\,($M_0=\int_{\R^3}f_{0}(v)\dd v$), it is easily seen that
for the solution $\wt{f}(t,v)$ of Eq.(\ref{Equation}) with the initial datum $\wt{f}_0(v):=f_0(v+v_0)$ (which has zero mean-velocity), the $v$-translation
$(t,v)\mapsto f(t,v):=\wt{f}(t, v-v_0)$ is the solution of Eq.(\ref{Equation}) with the initial datum $f_0(v)$.}
\end{remark}

\begin{remark}\label{Remark 1.4}{\rm From the condition (\ref{condition}) we see that $\|f_0\|_{L^{\infty}}$ is less than
the right hand side of (\ref{condition}).
In the proof of Theorem \ref{main results} we will prove that this smallness of $\|f_0\|_{L^{\infty}}$ implies the very high temperature condition:
\beqa\label{Kinetic high change}
\fr{\overline{T}}{\overline{T}_{c}}
=\fr{2\pi[\zeta(3/2)]^{\fr{5}{3}}}{3\zeta(5/2)}\cdot\fr{M_2}{{M_0}^{\fr{5}{3}}}
\ge
2^{\fr{70\beta}{3}-\fr{19}{3}}
\fr{(4\beta+4)^{\fr{4\beta+4}{3}}}{(4\beta+2)^{\fr{4\beta+2}{3}}}
\Big(\fr{b}{a}\Big)^{\fr{4\beta+4}{3}}>>1.
\eeqa
In other words, the very high temperature condition is necessary for the global in time existence of such small bounded solutions of Eq.(\ref{Equation}).}
\end{remark}

\begin{remark}
{\rm
 From our proof of Theorem \ref{main results} it is easily seen that if we do not assume that the initial data $f_0$ are continuous,
i.e. if we only assume that $0\le f_0\in L^1_3(\R^3)\cap L^{\infty}_{3}({\R}^3)$, then
the above main result also holds true for mild solutions, where the definition of mild solutions is given in Definition \ref{mild solutions} below.
}
\end{remark}

\begin{remark}
  {\rm
From (\ref{condition of Phi}) and (\ref{abh}) (i.e. $a/b=a_0/b_0$) we see that our initial assumption (\ref{condition}) in Theorem \ref{main results} for the original equation (\ref{Equation1}) is equivalent to
$$
(\|f_0\|_{L^1}+ \|f_0\|_{L^{\infty}})
  \Big(\fr{\|f_0\|_{L_3^1}}{\min\{\|f_0\|_{L^1}, M_2(f_0)\}}\Big)^{4\beta}
  \le \fr{1}
  {2^{35\beta-11}}
  \fr{(4\beta+2)^{2\beta+1}}
  {(4\beta+4)^{2\beta+2}}\Big(\fr{a}{b}\Big)^{2(\beta+1)}
  \fr{1}{\hbar^3}$$
which
is easily satisfied for any given initial datum $f_0$ if $\hbar $ is small enough.
}
\end{remark}

\subsection{Strategy and organization of the paper}

In order to prove the global in time existence of solutions of Eq.(\ref{Equation}), we consider two types of approximate equations: cutoff equations and the intermediate equation. In cutoff equations the collision kernel $B$ is cut off as $B_n=B\wedge n$ and the
solutions $f$ in the collision integrals are cut off as $f\wedge n,\, f \wedge K$ (with constants $n>0,\, K>0$). Here
$$x\wedge y=\min\{x, y\},\quad x,y\in {\R}.$$
Then with $K$ fixed we use the $L^1$ relative compactness of  mild solutions $\{f^n\}_{n=1}^{\infty}$ of cutoff equations to obtain a mild solution $f$ of the intermediate equation.
In the intermediate equation the collision kernel $B$ is the original kernel, but the mild solution $f$ in the collision integrals are partly modified as $f \wedge K$. We finally use carefully multi-step iterations of the collision gain operator $Q^+(\cdot,\cdot)$ with a suitable choice of $K$ to obtain the uniform $L^{\infty}$ estimate $f\le K$
for the solution $f$ of the intermediate equation so that $f$ is the solution of Eq.(\ref{Equation}).

The paper is organized as follows:
In Section 2 we give some basic definitions and then present some technical lemmas and propositions which will be used in the subsequent sections.
Section 3 is the proof of existence and $L^1$ relative compactness of mild solutions $f^n$ of cutoff equations.
In Section 4 we prove the existence of a bounded mild solution $f$ of  the intermediate equation. The proof of Theorem \ref{main results} is concluded in Section 5.
\vskip3mm
 \section{Some properties of collision operations}
This section is a preparation for proving our main result.
We begin by recalling a few elementary properties of collision integrals
which are used in deriving basic equalities and estimates.
From the $\og$-representation (\ref{omega representation1}) and the
identity $|v'|^2+|v_*'|^2=|v|^2+|v_*|^2$
one sees that for any
$\og\in {\mS}^2$, $(v,v_*)\mapsto (v',v_*')$ is an orthogonal linear transformation
on ${\mR}^6$. It is this property that makes the proof of some elementary properties of collision integrals relatively easy. On the other hand, the $\sg$-representation (\ref{sigma representation})
has the advantage that gives a nice structure for the collision integrals.
The two representations however are equivalent in representing collision integrals. In fact
it is not difficult to prove the following identity (for all $F$ which are nonnegative measurable or satisfy
required integrability):
\beqa\label{sigma omega}\int_{{\mS}^2} B(v-v_*,\sg)F(v',v_*'){\rm d}\sg=
\int_{{\mS}^2}\wt{B}(v-v_*,\og)F(v',v_*'){\rm d}\og,\quad v,v_*\in \R^3 \eeqa
where $(v',v_*')$ in the left hand side and in the right hand side are
given by $\sg$-representation (\ref{sigma representation})  and $\og$-representation
(\ref{omega representation1}) respectively. This property allows us to translate
some elementary properties of collision integrals with the $\sg$-representation  into
those with the $\og$-representation so that they can be proven rigorously (see also
Chapter 2.4 in \cite{Villani2002Chapter}). For instance, applying (\ref{sigma omega}) and the $\og$-representation one deduces
the following general identity with the $\sg$-representation (which is often used in
deriving fundamental properties of collision integrals):
\beqa\label{identity}
&& \inttt_{{\mR}^3\times{\mR}^3\times {\mS}^2}B(v-v_*,\sg)F(v',v_*',v,v_*)\dd\sg \dd v_* \dd v\no\\
&&=\inttt_{{\mR}^3\times{\mR}^3\times {\mS}^2} B(v-v_*,\sg)F(v,v_*,v',v_*')\dd \sg \dd v_*\dd v\qquad ({\rm exchange\,\, prime})\eeqa
where $(v',v_*')$ is given by the $\sg$-representation (\ref{sigma representation}) and
$F$ are nonnegative Lebesgue measurable functions on
${\mR}^3\times{\mR}^3\times {\mR}^3 \times {\mR}^3$ or satisfy required integrability.

\subsection{Collision integral operators and approximate equations}
In this subsection, we introduce definitions of collision integral operators, two types of approximate equations and mild solutions.

\begin{definition}\label{def 2.1}
 Fix $K>0, n\in{\mN}$. Suppose the collision kernel $B$ in $(\ref{collision kernel})$ satisfies $(\ref{condition of kernel})$.
For any nonnegative measurable functions $f, g$ on $\R^3$, we define for any $v\in \R^3$
\beqa\label{Q+ bilinear} Q^{+}(f,g)(v)=\iint\limits_{\R^3\times\SP^2}B(v-v_*,\si)f'{g}'_{*}\dd \si \dd v_{*}\eeqa
 which is called the (bilinear) collision gain operator, and
\beqa&&\label{Q+}
Q^{+}(f)(v)=
\intt_{{\bRS}}
B(v-v_*,\sg)f'f_*'
(1+ f+ f_*){\rm d}\sg{\rm d}v_*,\\
&&\label{Q-}
Q^{-}(f)(v)=
\intt_{{\bRS}}
B(v-v_*,\sg)
ff_*
(1+ f'+ f_*')
{\rm d}\sg{\rm d}v_*,
\\
&&\label{QK+}
Q_{K}^{+}(f)(v)=
\intt_{{\bRS}}
B(v-v_*,\sg)f'f_*'
(1+ f\wedge K+ f_*\wedge K){\rm d}\sg{\rm d}v_*,\\
&&\label{Lft}
  L_{K}(f)(v)=\iint\limits_{\R^3\times\SP^2}B(v-v_*,\si)f_{*}(1+ f'\land K+ f'_{*}\land K)\dd \si\dd v_{*},
\\
&&\label{QK-}
Q_{K}^{-}(f)(v)=f(v)L_K(f)(v)=
\intt_{{\bRS}}
B(v-v_*,\sg)
ff_*
(1+ f'\wedge K+ f_*'\wedge K)
{\rm d}\sg{\rm d}v_*,
\\
&&Q_{K}(f)(v)=Q_{K}^{+}(f)(v)-Q_{K}^{-}(f)(v)\quad {\rm for}\,\, f\in  L^1_1(\R^3),\label{QK}
\\
&&\label{Qnk+}
Q^{+}_{n,K}(f)(v)=
\intt_{{\bRS}}
B_n(v-v_*,\sg)(f'\wedge n)(f_*'\wedge n)
(1+ f\wedge K+ f_*\wedge K){\rm d}\sg{\rm d}v_*,
\\
&&\label{Qnk-}
Q^{-}_{n,K}(f)(v)=
\intt_{{\bRS}}
B_n(v-v_*,\sg)
(f\wedge n)(f_*\wedge n)
(1+ f'\wedge K+ f_*'\wedge K)
{\rm d}\sg{\rm d}v_*,\\
&&Q_{n,K}(f)(v)=Q^{+}_{n,K}(f)(v)-Q^{-}_{n,K}(f)(v)\label{Qnk}
\eeqa
where $B_n(v-v_*,\sg)=B(v-v_*,\sg)\wedge n$.
 \end{definition}

\begin{definition}
 Given any $K>0, n\in \N$. Suppose the collision kernel $B$ in $(\ref{collision kernel})$ satisfies $(\ref{condition of kernel})$.
Our cutoff equation of Eq.$(\ref{Equation})$ mentioned above is defined by
\begin{equation}\label{approximate equation 1}
\fr{\p }{\p t}f(t,v)
=Q_{n,K}(f)(t,v),\quad (t,v)\in[0,\infty)\times {\R}^3,
\end{equation}
and the intermediate equation of Eq.$(\ref{Equation})$ is defined by
\beqa\label{approximate equation 2}
\fr{\p }{\p t}f(t,v)
=Q_{K}(f)(t,v),\quad (t,v)\in[0,\infty)\times {\R}^3.
\eeqa
Here $Q_{n,K}(f)(t,v)=Q_{n,K}(f(t,\cdot))(v),\, Q_{K}(f)(t,v)=Q_{K}(f(t,\cdot))(v)$.
\end{definition}

 \begin{definition}\label{mild solutions}
 Let $n\in \mathbb{N}$, $B_n=B\wedge n$ with $B$ the collision kernel satisfying $(\ref{condition of kernel})$.  Let  $Q(\cdot), Q^{\pm}(\cdot)$, $Q_{K}(\cdot), Q_{K}^{\pm}(\cdot)$,
 $Q_{n,K}(\cdot)$, $Q_{n,K}^{\pm}(\cdot)$ be the collision operators defined in  $(\ref{Q(f)})$, $(\ref{Q+})-(\ref{Qnk})$ respectively. Let $Q_*(\cdot),Q_*^{\pm}(\cdot)$ be one of the three couples
 $Q(\cdot), Q^{\pm}(\cdot)$; $Q_{K}(\cdot), Q_{K}^{\pm}(\cdot)$;
 $Q_{n,K}(\cdot)$, $Q_{n,K}^{\pm}(\cdot)$.
 Given any $0\le f_0\in L^1_2({\mR}^3)$,
we say that a nonnegative measurable function $(t, v)\mapsto f(t,v)$ on $[0,\infty)\times \R^3$
is a mild solution of the equation
\beas&&
\fr{\p }{\p t}f(t,v)
=Q_{*}(f)(t,v),\quad (t,v)\in [0,\infty)\times {\R}^3
\eeas
with the initial datum $f(0,\cdot)=f_0$,
if $f$ satisfies
$f\in L^{\infty}([0,\infty); L^1_2(\R^3))$ and there is a null set $Z\subset {\mR}^3$,
which is independent of $t$, such that for all $t\in [0,\infty)$ and all $v\in {\mR}^3\setminus Z$,
\beas&&
\int_{0}^{t}Q_{*}^{\pm}(f)(\tau,v){\rm d}\tau<\infty,
\\
&&
f(t,v)=f_0(v)+\int_{0}^{t}Q_{*}(f)(\tau,v){\rm d}\tau.\eeas
Furthermore if $f$ also conserves the mass, momentum and energy, i.e. if
$f$ satisfies  $(\ref{MEconservation})$, then
$f$ is called a conservative mild solution.
For any given $0<T<\infty$, by replacing $[0,\infty)$ with $[0, T]$ we also define
$($conservative$)$
mild solutions on $[0, T]\times {\mR}^3.$
\end{definition}

\begin{remark} \label{remark of mild solutions}
{\rm
For the cutoff case $Q(\cdot)=Q_{n,K}, Q^{\pm}(\cdot)=Q_{n,K}^{\pm}(\cdot)$,
the null sets $Z$ in the definition of mild solutions can be chosen as an empty set. See Proposition \ref{fn} below.
}
\end{remark}

\subsection{Some lemmas and propositions}
This subsection is a collection of  technical lemmas and propositions.
We begin with the proofs of two important lemmas. These two lemmas together with their corollary will help us to obtain the moment estimates of mild solutions of approximate equations (\ref{approximate equation 1}) and (\ref{approximate equation 2}).
  \begin{lemma} \label{LemmaA}
  Let $k>1, x\ge 0, y\ge 0$. Then we have
\beqa\label{1<k<2}
(k-1)\min\{ x^k, y^k\}\le (x+y)^k-x^k-y^k\le (2^k-2)\max\{ x^{k-\ld}y^{\ld},\, y^{k-\ld}x^{\ld}\}
\eeqa
where $0\le \ld\le \min\{1, k/2\}. $
  \end{lemma}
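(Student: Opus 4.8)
The plan is to prove the two inequalities in \eqref{1<k<2} separately, in each case reducing to a one-variable problem by homogeneity. Both sides of \eqref{1<k<2} are homogeneous of degree $k$ in $(x,y)$, so after discarding the trivial case $x=y=0$ we may rescale and assume $x+y=1$; writing $x=t$, $y=1-t$ with $t\in[0,1]$, and using the symmetry $t\leftrightarrow 1-t$, it suffices to treat $t\in[0,\tfrac12]$, i.e. $x\le y$, in which case $\min\{x^k,y^k\}=x^k$ and $\max\{x^{k-\ld}y^\ld, y^{k-\ld}x^\ld\}=x^\ld y^{k-\ld}$ (since $x\le y$ and $\ld\le k/2$ force $x^\ld y^{k-\ld}\ge x^{k-\ld}y^\ld$).

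\textbf{Lower bound.} With $x+y=1$, $x\le\tfrac12$, I must show $(k-1)x^k\le 1-x^k-(1-x)^k$, i.e.
\[
g(x):=1-(1-x)^k-k x^k\ge 0,\qquad x\in[0,\tfrac12].
\]
Here $g(0)=0$, and I would examine $g'(x)=k(1-x)^{k-1}-k^2 x^{k-1}=k\bigl((1-x)^{k-1}-k x^{k-1}\bigr)$. Since $k>1$, the map $x\mapsto (1-x)^{k-1}-kx^{k-1}$ is strictly decreasing on $[0,\tfrac12]$ (both terms are monotone in the right direction), is positive at $x=0$, and at $x=\tfrac12$ equals $(\tfrac12)^{k-1}(1-k)<0$; hence $g$ increases then decreases on $[0,\tfrac12]$, so $\min_{[0,1/2]} g=\min\{g(0),g(\tfrac12)\}$. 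Thus it remains only to check $g(\tfrac12)=1-2(\tfrac12)^k\ge 0$, i.e. $2^{k}\ge 2$, which holds for $k\ge 1$. This disposes of the left inequality. (An alternative, perhaps cleaner, route: prove $1-(1-x)^k\ge k x^{k}$ directly by noting $1-(1-x)^k\ge k x(1-x)^{k-1}$ is false in general, so the monotonicity argument above is the safer one.)

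\textbf{Upper bound.} With $x+y=1$, $x\le\tfrac12$, and $\ld\in[0,\min\{1,k/2\}]$, I must show
\[
(x+y)^k-x^k-y^k=1-x^k-(1-x)^k\le (2^k-2)\,x^\ld(1-x)^{k-\ld}.
\]
I would argue that the left side is bounded above by its value-times-ratio against a convenient comparison: first note the elementary bound $1-x^k-(1-x)^k\le (2^k-2)\,x^{k/2}(1-x)^{k/2}$ — this is the $\ld=k/2$ case and is the extremal one, because for $x\le\tfrac12$ we have $x^\ld(1-x)^{k-\ld}$ decreasing in $\ld$, so the case $\ld=k/2$ gives the smallest right-hand side and hence the strongest inequality; proving it for $\ld=k/2$ proves it for all admissible $\ld$. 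To establish the $\ld=k/2$ case, set $s=x^{1/2}(1-x)^{1/2}\in[0,\tfrac12]$ and $p=x^k+(1-x)^k$; one checks $x^k+(1-x)^k+ (2^k-2)(x(1-x))^{k/2}$... actually the clean way is: with $u=x/(1-x)\in[0,1]$, divide through by $(1-x)^k$ to get the equivalent one-variable inequality
\[
\frac{1}{(1-x)^k}-u^k-1\le (2^k-2)u^{k/2},
\]
and since $1/(1-x)=1+u\cdot\frac{x}{u(1-x)}$... this is getting circular, so instead I would simply set $h(x)=(2^k-2)(x(1-x))^{k/2}-\bigl(1-x^k-(1-x)^k\bigr)$ on $[0,\tfrac12]$, note $h(0)=0$ and $h(\tfrac12)=(2^k-2)4^{-k/2}-(1-2\cdot2^{-k})=(2^k-2)2^{-k}-(1-2^{1-k})=(1-2^{1-k})-(1-2^{1-k})=0$, and show $h\ge0$ in between by a convexity/derivative sign analysis of $h$ (or equivalently of the function $\phi(x)=(1-x^k-(1-x)^k)/(x(1-x))^{k/2}$, showing $\phi\le 2^k-2$ with equality at the endpoints $x\to0$ and $x=\tfrac12$).

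\textbf{Main obstacle.} The genuinely delicate step is the upper bound at the level of the sharp constant $2^k-2$: one must verify that the function $\phi(x)=(1-x^k-(1-x)^k)(x(1-x))^{-k/2}$ does not exceed its common endpoint value $2^k-2$ on $(0,\tfrac12)$. The lower bound and the homogeneous reduction are routine; the endpoint-matching phenomenon in the upper bound means a crude estimate will not suffice, and I expect to need either a careful study of $\log\phi$ (showing $(\log\phi)'$ has a single sign change, so $\phi$ is unimodal with its interior behavior pinned by the two equal endpoint values, forcing $\phi\le 2^k-2$ if $\phi$ is convex there, or $\phi\ge$ — one must get the direction right) or an appeal to a known one-variable inequality such as a power-mean comparison. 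If the unimodality argument is awkward, a fallback is to prove the weaker upper bound with a non-sharp constant (e.g. replace $2^k-2$ by $2^k$), which still suffices for all the moment-estimate applications in Sections 3–5, but since the statement claims $2^k-2$ I would push through the sharp version via the $\log\phi$ monotonicity analysis.
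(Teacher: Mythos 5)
Your lower-bound argument is sound in outline (the minimum of $g$ on $[0,\tfrac12]$ is indeed attained at an endpoint, and both endpoint values are nonnegative), apart from a slip in evaluating $g(\tfrac12)$: with $g(x)=1-(1-x)^k-kx^k$ one gets $g(\tfrac12)=1-(1+k)2^{-k}$, not $1-2^{1-k}$; this is still $\ge 0$ because $2^k\ge 1+k$ for $k\ge1$, so the conclusion survives. The paper gets the same bound in two lines from Bernoulli's inequality: $(x+y)^k\ge y^k(1+k\tfrac{x}{y})=y^k+kxy^{k-1}\ge y^k+kx^k$ for $x\le y$.

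The upper bound, however, contains a genuine gap: your reduction to the single case $\lambda=k/2$ leads to a false inequality whenever $k>2$. The reduction itself is logically valid (since $x^{\lambda}y^{k-\lambda}$ is non-increasing in $\lambda$ for $x\le y$, the case $\lambda=k/2$ gives the strongest statement), but the target inequality $(x+y)^k-x^k-y^k\le(2^k-2)(xy)^{k/2}$ simply fails for $k>2$: take $k=4$, $x=1$, $y=2$, which gives $81-1-16=64$ on the left and $14\cdot 4=56$ on the right. This is precisely why the lemma caps $\lambda$ at $\min\{1,k/2\}$ rather than $k/2$; for $k>2$ the extremal admissible exponent is $\lambda=1$, and your plan never addresses that case. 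Moreover, even in the range $1<k\le2$ where $\lambda=k/2$ is admissible, you explicitly leave the decisive step (that $\phi(x)=(1-x^k-(1-x)^k)(x(1-x))^{-k/2}\le 2^k-2$ on $(0,\tfrac12)$) unproven. The paper's route avoids both difficulties: dividing by $y^k$ reduces everything to the one-variable inequality $(1+X)^k\le 1+X^k+(2^k-2)X^{\lambda}$ on $[0,1]$, which for $k\ge2$ follows from convexity of $X\mapsto(1+X)^k-1-X^k-(2^k-2)X$ (its maximum on $[0,1]$ is attained at an endpoint, where it vanishes) combined with $X\le X^{\lambda}$ for $\lambda\le1$, and for $1<k\le2$ from the monotonicity of $X\mapsto((1+X)^k-1-X^k)X^{-\lambda}$ on $(0,1]$, whose value at $X=1$ is $2^k-2$. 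To repair your argument you would need to treat $k>2$ with $\lambda=1$ separately along these or similar lines, and to actually carry out the endpoint-matching analysis you flagged as the main obstacle for $1<k\le2$.
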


\bepf The second inequality in ({\ref{1<k<2}}) relies on the following elementary
inequality:
\beqa \label{4.8}
(1+X)^k\leq 1+X^k+(2^k-2)X^{\ld}, \ \ \ \ X\in[0,1].
\eeqa
To prove (\ref{4.8}) we first assume that $k\ge 2$. Let
$$\vp(X)=(1+X)^k-1-X^k-(2^k-2)X,\quad X\in [0,1].$$
Since $\vp$ is convex on $[0,1]$,
it follows that $\vp(X)\le \max\{ \vp(0), \vp(1)\}=0$ for all
$X\in [0,1].$ Thus
\beas
(1+X)^k\le 1+X^k+(2^k-2)X
\le  1+X^k+(2^k-2)X^{\ld}\qquad \forall\, X\in [0,1].
\eeas
Here in the last inequality we used the assumption $0\le \ld\le \min\{1, k/2\}=1.$
Next assume that $1<k\le 2.$ We consider
\beas\psi(X)=(1+X)^kX^{-\ld}-X^{-\ld}-X^{k-\ld},\quad X\in(0,1].\eeas
Using convexity of the derivative $\psi'(X)$ it is not difficult to prove that the function $\psi(X)$ is increasing on $(0,1]$.
Then we have
$$(1+X)^kX^{-\ld}-X^{-\ld}-X^{k-\ld}=\psi(X)\le \psi(1)=2^k-2\qquad \forall\, X\in (0,1].$$
This gives
\beas
(1+X)^k-1-X^{k}\le (2^k-2)X^{\ld}\qquad \forall\, X\in [0,1].
\eeas
combining the above we have (\ref{4.8}).

Now let $x\ge 0, y\ge 0$. Without loss of generality we may assume $x\le y$ and $y>0$.
It follows from (\ref{4.8}) that
\beas
&&(x+y)^k-x^k-y^k
=y^k\Big( \Big(1+\fr{x}{y}\Big)^k-1-\Big(\fr{x}{y}\Big)^k\Big)
\no\\
&&\le y^k (2^k-2)\Big(\fr{x}{y}\Big)^{\ld}
=(2^k-2)\max\{ x^{k-\ld}y^{\ld},\, y^{k-\ld}x^{\ld}\},\quad k> 1.
\eeas
This proves the second inequality in (\ref{1<k<2}).
Next  using Bernoulli's inequality we have
$$
(x+y)^k
=y^k \Big(1+\fr{x}{y}\Big)^k
\ge y^k\Big( 1+k\fr{x}{y}\Big)
=y^k+k x y^{k-1}\ge y^k+k x^k, \quad k>1.$$
Then
$$(x+y)^k-x^k-y^k\ge (k-1)x^k=(k-1)\min\{ x^k, y^k\},\quad k>1.$$
This ends up the proof.
\end{proof}

\vskip3mm

\begin{lemma}\label{LemmaD}  Let  $ c\ge 0,  \vh(v)=c+|v|^2 , v\in{\R^3}$, and let
$k> 1, 0\le \ld\le \min\{1, k/2\}$.  Then  for  all  $(v,v_*,\sg)
\in{\R^3\times \R^3\times \bS}$
\beqa\label{(1)}
&&[\vh(v')]^k+[\vh(v_*')]^k - [\vh(v)]^k-[\vh(v_*)]^k
\no\\
&&\le 2(2^k-2)\{[\vh(v)]^{k-\ld}[\vh(v_*)]^{\ld}+
\vh(v)^{\ld}[\vh(v_*)]^{k-\ld}\}
-4^{-k}(k-1)[\kappa(\theta)]^{k}[\vh(v)]^k
\eeqa
where $v'$,$v_*'$ are given by $($\ref{sigma representation}$)$, and
\beqa\label{(2)}
&&\kappa(\theta)=\min\{(1-\sin(\theta/2))^2\,, (1-\cos(\theta/2))^2\},
\quad \theta=\arccos(\la {\bf n},\sg\ra)\in[0,\pi]\\
\label{(3)}
&&{\bf n}=\fr{v-v_*}{|v-v_*|}\quad {\rm if}\quad v\neq v_*;\quad
{\bf n}={\bf e}_1\quad {\rm if}\quad v= v_*.
\eeqa
\end{lemma}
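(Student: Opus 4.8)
The plan is to reduce the vector identity in Lemma~\ref{LemmaD} to the scalar inequality already established in Lemma~\ref{LemmaA}. The key geometric fact is that the $\sg$-representation (\ref{sigma representation}) together with $|v'|^2+|v_*'|^2=|v|^2+|v_*|^2$ gives $\vh(v')+\vh(v_*')=\vh(v)+\vh(v_*)$; hence, writing $a=\vh(v)$, $b=\vh(v_*)$, $a'=\vh(v')$, $b'=\vh(v_*')$, I would split the left-hand side as
\be
(a')^k+(b')^k-a^k-b^k=\big[(a')^k+(b')^k-(a'+b')^k\big]+\big[(a+b)^k-a^k-b^k\big],
\ee
using $a'+b'=a+b$. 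The second bracket is controlled from above by the second inequality of Lemma~\ref{LemmaA}, which yields exactly the term $2(2^k-2)\{a^{k-\ld}b^{\ld}+a^{\ld}b^{k-\ld}\}$ (the factor $2$ absorbing the $\max$ into the symmetric sum). For the first bracket, the first inequality of Lemma~\ref{LemmaA} gives $(a'+b')^k-(a')^k-(b')^k\ge (k-1)\min\{(a')^k,(b')^k\}$, so the first bracket is $\le -(k-1)\min\{(a')^k,(b')^k\}$.

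The remaining—and main—task is the geometric lower bound: to show that $\min\{\vh(v'),\vh(v_*')\}\ge 4^{-1}\kappa(\theta)\,\vh(v)$, so that raising to the $k$-th power produces the claimed $-4^{-k}(k-1)[\kappa(\theta)]^k[\vh(v)]^k$. Here I would compute $\vh(v')$ and $\vh(v_*')$ explicitly from (\ref{sigma representation}). Setting $w=\tfrac{v+v_*}{2}$ and $r=\tfrac{|v-v_*|}{2}$, we have $v'=w+r\sg$ and $v_*'=w-r\sg$, so $|v'|^2=|w|^2+r^2+2r\la w,\sg\ra$ and similarly for $|v_*'|^2$. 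The essential point is that at least one of the two angles between $\sg$ and $\pm{\bf n}$ is controlled: since $\theta=\arccos\la{\bf n},\sg\ra$, either $\sg$ is within angle $\theta$ of ${\bf n}$ or within angle $\pi-\theta$ of $-{\bf n}$, and in either case one of $v',v_*'$ stays a definite fraction of the way from the origin. Concretely, using $|v|^2\le 2|w|^2+2r^2$ one reduces to estimating $|w\pm r\sg|$ from below in terms of $|w|$ and $r$; the half-angle combinations $1-\sin(\theta/2)$ and $1-\cos(\theta/2)$ enter precisely because $|w+r\sg|\ge \big||w|-r\big|$ is too weak and one must instead use the triangle inequality in the plane spanned by ${\bf n}$ and the component of $\sg$ orthogonal to ${\bf n}$, giving a bound like $|w+r\sg|\ge (|w|+r)\sin(\theta/2)$ or $(|w|+r)\cos(\theta/2)$ after the worst-case analysis. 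Squaring and combining the two cases yields the factor $\kappa(\theta)=\min\{(1-\sin(\theta/2))^2,(1-\cos(\theta/2))^2\}$ up to the harmless constant $4^{-1}$; the case $v=v_*$ (so $r=0$) is trivial since then $v'=v_*'=w$ and $\vh(v')=\vh(v_*')=\vh(v)$ while $\kappa$ is bounded by $1$.

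I expect the geometric lower bound $\min\{\vh(v'),\vh(v_*')\}\gtrsim \kappa(\theta)\vh(v)$ to be the only real obstacle; it requires a careful case split depending on the relative position of $w$, ${\bf n}$, and $\sg$, and on checking that the half-angle expressions $(1-\sin(\theta/2))^2$ and $(1-\cos(\theta/2))^2$ are the sharp ones. Once that estimate is in hand, the rest is bookkeeping: combine the two bracket estimates, discard the nonnegative cross terms appropriately, and use $\min\{(a')^k,(b')^k\}=(\min\{a',b'\})^k\ge (4^{-1}\kappa(\theta)\vh(v))^k$. Assembling these gives (\ref{(1)}) with the stated constants.
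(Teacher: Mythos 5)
Your reduction of the lemma to Lemma \ref{LemmaA} via the identity $\vh(v')+\vh(v_*')=\vh(v)+\vh(v_*)$ and the two-bracket decomposition is exactly the paper's first step, and it correctly yields $D_k\le(2^k-2)\max\{\vh^{k-\ld}\vh_*^{\ld},\vh^{\ld}\vh_*^{k-\ld}\}-(k-1)\min\{(\vh')^k,(\vh_*')^k\}$. The gap is in what you call the ``main task'': the pointwise geometric bound $\min\{\vh(v'),\vh(v_*')\}\ge 4^{-1}\kappa(\theta)\,\vh(v)$ that your plan rests on is \emph{false} in general, so no amount of careful case analysis on $w$, ${\bf n}$, $\sg$ will establish it. Concretely, take $c=0$, $v={\bf e}_1$, $v_*={\bf e}_2$, and $\sg=\fr{v+v_*}{|v+v_*|}$. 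Then $v_*'=0$ and $\la{\bf n},\sg\ra=0$, so $\theta=\pi/2$ and $\kappa(\theta)=(1-1/\sqrt{2})^2>0$, while $\min\{\vh(v'),\vh(v_*')\}=0<4^{-1}\kappa(\theta)\vh(v)$. The obstruction is structural: $\min\{|v'|,|v_*'|\}$ can vanish for $\theta$ bounded away from $0$ and $\pi$ whenever $|v_*|$ is comparable to $|v|$, since one post-collisional velocity can land near the origin.

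What the paper does instead is a dichotomy in $|v|/|v_*|$ rather than in the geometry of $\sg$ alone. Setting $M(\theta)=\max\big\{\fr{\sin(\theta/2)}{1-\sin(\theta/2)},\fr{\cos(\theta/2)}{1-\cos(\theta/2)}\big\}$, one has from $|v'-v|=|v-v_*|\sin(\theta/2)$ and $|v_*'-v|=|v-v_*|\cos(\theta/2)$ that the lower bound $\vh',\vh_*'\ge\fr14\kappa(\theta)\vh$ \emph{does} hold in the regime $|v|\ge 2M(\theta)|v_*|$, where your argument goes through. In the complementary regime $|v|\le 2M(\theta)|v_*|$ one abandons the negative term entirely (it is discarded as $\le 0$) and instead checks that the quantity $4^{-k}(k-1)[\kappa(\theta)]^k\vh^k$ one wishes to subtract is itself dominated by a piece of the positive term: there $\vh\le[2M(\theta)]^2\vh_*$, and the elementary inequality $\kappa(\theta)M(\theta)\le\fr14$ gives $4^{-k}(k-1)[\kappa(\theta)]^k\vh^k\le 4^{-k}(2^k-2)(1/2)^k\vh^{k-\ld}\vh_*^{\ld}$ (using $\ld\le k/2$), so the desired term can be inserted at the cost of inflating the constant to $2(2^k-2)$. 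Without this second case your proof cannot be completed, and the factor $2$ in front of $(2^k-2)$ in (\ref{(1)}) is not merely the cost of replacing $\max$ by a sum but also absorbs this extra contribution.
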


\begin{proof}Given any  $(v,v_*,\sg)
\in{\R^3\times \R^3\times \bS},$ denote
$$\vh=\vh(v), \quad \vh_*=\vh(v_*), \quad \vh'=\vh(v'),\quad
\vh_*'=\vh(v_*'),$$
$$D_k=(\vh')^k+(\vh_*')^k-(\vh)^k-(\vh_*)^k.$$
Then it follows from the identity  $\vh'+\vh_*'=\vh+\vh_*$ that
\beas
D_k=(\vh+\vh_*)^k-(\vh)^k-(\vh_*)^k-
[(\vh'+\vh_*')^k-(\vh')^k-(\vh_*')^k].
\eeas
By Lemma \ref{LemmaA} we conclude
\beas&& (\vh+\vh_*)^k-(\vh)^k-(\vh_*)^k
\le(2^k-2)\max\{(\vh)^{k-\ld}\vh_*^{\ld},\,
\vh^{\ld}(\vh_*)^{k-\ld}\}\eeas
and
$$(\vh'+\vh_*')^k-(\vh')^k-(\vh_*')^k\geq (k-1)\min\{(\vh')^k,\, (\vh_*')^k\}.$$
Thus
\beqa\label{(4)}
D_k\le (2^k-2)\max\{(\vh)^{k-\ld}\vh_*^{\ld},\,
\vh^{\ld}(\vh_*)^{k-\ld}\}- (k-1)\min\{(\vh')^k,\, (\vh_*')^k\}.
\eeqa
This implies first that (\ref{(1)}) holds for $\kappa(\theta)=0$.
Now suppose that $\kappa(\theta)>0$, i.e. $0<\cos(\theta/2)<1$.
By definition of $v',v_*'$ we have
\beas
|v'-v|=|v-v_*|\sin(\theta/2),\quad |v_*'-v|=|v-v_*|\cos(\theta/2).
\eeas
From this we have
$$|v'|=|v'-v+v|\ge |v|-|v'-v|
=|v|-|v-v_*|\sin(\theta/2),$$
$$|v_*'|=|v_*'-v+v|\ge |v|-|v_*'-v|
=|v|-|v-v_*|\cos(\theta/2).$$
Let
$$M(\theta)=\max\bigg\{\frac{\sin(\theta/2)}{1-\sin(\theta/2)}\,,\,
\frac{\cos(\theta/2)}
{1-\cos(\theta/2)}\bigg\}.$$
Then it follows from $\max\{ \sin(\theta/2),\cos(\theta/2)\}\ge 1/2$  that $M(\theta)\ge 1$.
At this stage we will
look at two possibilities.

{\bf Case 1}: $|v|\geq2M(\theta)|v_*|$. By definitions of $v', v_*'$ and $\cos(\theta)$, we have
\beas&& |v'|
\ge |v|-|v-v_*|\sin(\theta/2)
\ge |v|-|v|\sin(\theta/2)-|v_*|\sin(\theta/2)
\\
&&\quad\,\,\,=|v|(1-\sin(\theta/2))-|v_*|\sin(\theta/2)\ge \fr{1-\sin(\theta/2)}{2}|v|,\\
&&|v_*'|
\ge |v|-|v-v_*|\cos(\theta/2)
\ge |v|-|v|\cos(\theta/2)-|v_*|\cos(\theta/2)
\\
&&\quad\,\,\,=|v|(1-\cos(\theta/2))-|v_*|\cos(\theta/2)\ge \fr{1-\cos(\theta/2)}{2}|v|.
\eeas
These imply
$$\vh'=c+|v'|^2\ge c+\frac{1}{4}(1-\sin(\theta/2))^2|v|^2
\ge \frac{1}{4}(1-\sin(\theta/2))^2(c+|v|^2)
=\frac{1}{4}(1-\sin(\theta/2))^2\vh\ge \fr{1}{4}\kappa(\theta) \vh,$$
$$\vh_*'=c+|v_*'|^2\ge c+\frac{1}{4}(1-\cos(\theta/2))^2|v|^2
\ge \frac{1}{4}(1-\cos(\theta/2))^2(c+|v|^2)
=\frac{1}{4}(1-\cos(\theta/2))^2\vh\ge \fr{1}{4}\kappa(\theta) \vh,$$
and so
$$(k-1)\min\{(\vh')^k,\, (\vh_*')^k\}\ge
(k-1)\fr{1}{4^k}[\kappa(\theta)]^k (\vh) ^k,$$
hence, by (\ref{(4)}), we have
\beqa\label{Lemma B Case1}
D_k\le (2^k-2)\max\{(\vh)^{k-\ld}\vh_*^{\ld},\,
\vh^{\ld}(\vh_*)^{k-\ld}\}- 4^{-k}(k-1)[\kappa(\theta)]^k(\vh)^k.
\eeqa

{\bf Case 2}: $|v|\leq2M(\theta)|v_*|$. Then (notice that $M(\theta)\ge 1$)
$$\vh=c+|v|^2\le c+[2M(\theta)]^2|v_*|^2\le [2M(\theta)]^2(c+|v_*|^2)
=[2M(\theta)]^2 \vh_*.$$
From the inequality
\beas
\min\{ (1-x)^2, (1-y)^2\}\max\big\{\fr{x}{1-x},\, \fr{y}{1-y}\big\}\le \fr{1}{4}\qquad \forall\, 0<x, y<1
\eeas
we have
$$\kappa(\theta)M(\theta)=\min\{(1-\sin(\theta/2))^2\,, (1-\cos(\theta/2))^2\}\max\bigg\{\frac{\sin(\theta/2)}{1-\sin(\theta/2)}\,,\,
\frac{\cos(\theta/2)}
{1-\cos(\theta/2)}\bigg\}\le \fr{1}{4}. $$
This together with $\ld\le \fr{k}{2}$ and $k-1\le 2^k-2\, (k>1)$ gives
\beas 4^{-k}(k-1)[\kappa(\theta)]^k(\vh)^k
\le 4^{-k}(2^k-2)(1/2)^k(\vh)^{k-\ld}(\vh_*)^{\ld}.\eeas
Therefore, using (\ref{(4)}) (omitting the negative term) we have
\beqa\label{Lemma B Case2}
D_k\le (2^k-2)(1+(1/2)^k )
\max\{(\vh)^{k-\ld}(\vh_*)^{\ld},\,
(\vh)^{\ld}(\vh_*)^{k-\ld}\}- 4^{-k}(k-1)[\kappa(\theta)]^k(\vh)^k.\eeqa
Combining (\ref{Lemma B Case1}) and (\ref{Lemma B Case2}) gives (\ref{(1)}).
\end{proof}
\vskip3mm
\begin{corollary}\label{LemmaE}
Let  $s> 2$.  Then
for all $ (v,v_*,\sg)\in {\R^3\times \R^3\times \bS}$ and $0\le \gamma\le \min\{2, s/2\}$ we have
\beas\la v'\ra^{s}+\la v_*'\ra^s - \la v\ra ^s-\la v_*\ra^s
&\le& 2(2^{s/2}-2)\big(\la v\ra^{s-\gamma}\la v_*\ra^{\gamma}+
\la v\ra^{\gamma}\la v_*\ra^{s-\gamma}\big)-2^{-s}(\fr{s}{2}-1)[\kappa(\theta)]^{s/2}\la v\ra ^s,\\
|v'|^{s}+|v_*'|^s - |v|^s-|v_*|^s
&\le& 2(2^{s/2}-2)\big(|v|^{s-\gamma}|v_*|^{\gamma}+
|v|^{\gamma}|v_*|^{s-\gamma}\big)-2^{-s}(\fr{s}{2}-1)[\kappa(\theta)]^{s/2}| v| ^s
\eeas
where $v'$, $v_*'$ are given in $(\ref{sigma representation})$ and $\kappa(\theta), \theta$ are given in $(\ref{(2)})$ and $(\ref{(3)})$ respectively.
\end{corollary}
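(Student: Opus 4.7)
The corollary is a direct specialization of Lemma \ref{LemmaD}, so the plan is simply to recognize the right substitution and verify that the hypotheses translate correctly. The key observation is that both $\langle v\rangle^{s}$ and $|v|^{s}$ fit the template $[\varrho(v)]^{k}$ used in Lemma \ref{LemmaD}: for the first inequality set $c=1$, so that $\varrho(v)=1+|v|^{2}=\langle v\rangle^{2}$ and $[\varrho(v)]^{s/2}=\langle v\rangle^{s}$; for the second inequality set $c=0$, giving $\varrho(v)=|v|^{2}$ and $[\varrho(v)]^{s/2}=|v|^{s}$. In both cases the exponent in Lemma \ref{LemmaD} is $k=s/2$.

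Next I would match the mixed exponents in the upper bound. Lemma \ref{LemmaD} produces terms of the form $[\varrho(v)]^{k-\lambda}[\varrho(v_{*})]^{\lambda}$, whereas the corollary is stated with $\langle v\rangle^{s-\gamma}\langle v_{*}\rangle^{\gamma}$. Since $\langle v\rangle^{s-\gamma}=[\varrho(v)]^{(s-\gamma)/2}$ when $c=1$, the natural choice is $\lambda=\gamma/2$, so that $k-\lambda=(s-\gamma)/2$. I then verify the admissibility condition of Lemma \ref{LemmaD}: $k=s/2>1$ follows from $s>2$, and $0\le\lambda=\gamma/2\le \min\{1,k/2\}=\min\{1,s/4\}$ follows directly from the hypothesis $0\le\gamma\le\min\{2,s/2\}$.

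With these substitutions, Lemma \ref{LemmaD} immediately yields
\begin{eqnarray*}
[\varrho(v')]^{s/2}+[\varrho(v_{*}')]^{s/2}-[\varrho(v)]^{s/2}-[\varrho(v_{*})]^{s/2}
&\le& 2(2^{s/2}-2)\Big([\varrho(v)]^{(s-\gamma)/2}[\varrho(v_{*})]^{\gamma/2} \\
&& \qquad +[\varrho(v)]^{\gamma/2}[\varrho(v_{*})]^{(s-\gamma)/2}\Big) \\
&& -\, 4^{-s/2}\Big(\fr{s}{2}-1\Big)[\kappa(\theta)]^{s/2}[\varrho(v)]^{s/2}.
\end{eqnarray*}
Substituting $c=1$ turns every factor $[\varrho(\cdot)]^{\alpha}$ into $\langle\cdot\rangle^{2\alpha}$ and yields the first asserted inequality once one uses the identity $4^{-s/2}=2^{-s}$; substituting $c=0$ yields the second inequality in exactly the same way.

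There is no real obstacle here; the only thing to check carefully is that the hypothesis $0\le\gamma\le\min\{2,s/2\}$ in the corollary is exactly the translation of $0\le\lambda\le\min\{1,k/2\}$ under the doubling $\gamma=2\lambda$, $s=2k$, and that the exponent $4^{-k}$ cleanly becomes $2^{-s}$. Both verifications are mechanical, so the proof amounts to writing down the substitution and reading off the conclusion.
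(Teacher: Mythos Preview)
Your proof is correct and follows exactly the same approach as the paper: apply Lemma \ref{LemmaD} with $k=s/2$, $\lambda=\gamma/2$, and $c=1$ or $c=0$ respectively, then read off the result using $4^{-s/2}=2^{-s}$. Your verification of the admissibility condition $0\le\lambda\le\min\{1,k/2\}$ is the only detail the paper leaves implicit, so your write-up is in fact slightly more complete.
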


\begin{proof}In Lemma \ref{LemmaD},  we take $k=s/2> 1, \gamma =2\ld $ and take
$c=1, 0$ respectively, and then we deduce the above inequalities.
\end{proof}
\vskip3mm
The lemma below will be frequently used when we deal with the estimates of some kinds of collision integral operators with cutoff such as $Q_{K}$ and $Q_{n,K}$.
\begin{lemma}\label{LemmaF}
Let  $x,y,z\ge 0$.
Then
$$|x\wedge z-y\wedge z|\le |x-y|,\qquad x\le y\,\, \Longrightarrow\,\, x\wedge z\le y\wedge z,\qquad(x+y)\wedge z \le x\wedge z+y\wedge z.$$
\end{lemma}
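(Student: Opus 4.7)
\textbf{Proof plan for Lemma~\ref{LemmaF}.} All three assertions are elementary statements about the binary operation $x \wedge z = \min\{x,z\}$, so the natural strategy is a short case analysis for each one. I will not look for a single unified argument; instead I will handle the three inequalities independently, keeping each proof to a few lines.

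For the first inequality $|x\wedge z - y\wedge z|\le |x-y|$, I would assume without loss of generality that $x\le y$ and split according to the position of $z$. If $z\ge y$, both truncations are inactive and the two sides are equal. If $x\le z\le y$, the left side is $z-x$ and the right side is $y-x$, giving the inequality since $z\le y$. If $z\le x$, both truncations give $z$ and the left side is $0$. This exhausts the cases.

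For the monotonicity statement, assuming $x\le y$ it suffices to observe that $x\wedge z\le x\le y$ and $x\wedge z\le z$, so $x\wedge z$ is a lower bound for the pair $\{y,z\}$ and therefore $x\wedge z\le y\wedge z$.

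For subadditivity $(x+y)\wedge z\le x\wedge z+y\wedge z$, I would distinguish two cases. If $x+y\le z$ then $x\le z$ and $y\le z$, hence $x\wedge z+y\wedge z=x+y=(x+y)\wedge z$. If $x+y>z$, then $(x+y)\wedge z=z$, and it suffices to show $x\wedge z+y\wedge z\ge z$: this is immediate if either of $x$ or $y$ is $\ge z$, and if both are $<z$ then $x\wedge z+y\wedge z=x+y>z$. Nothing here is delicate; the only mild issue is being consistent about the case split in the first inequality, but there is no genuine obstacle and the entire argument should fit in under half a page.
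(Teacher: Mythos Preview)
Your proof is correct. The paper takes a slightly different, more conceptual route: it fixes $z\ge 0$, writes $f(X)=X\wedge z=\tfrac{1}{2}(X+z-|X-z|)$, and observes that $f$ is concave and non-decreasing on $[0,\infty)$. Monotonicity of $f$ gives the second assertion immediately; concavity together with $f(0)=0$ yields subadditivity $f(x+y)\le f(x)+f(y)$; and since the (one-sided) derivative of $f$ lies in $[0,1]$, one gets the $1$-Lipschitz bound $|f(x)-f(y)|\le |x-y|$. Your case-by-case argument is equally valid and arguably more transparent for a reader who does not want to unpack the concavity claim; the paper's approach is shorter and packages all three facts into a single structural observation about $f$.
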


\bepf Fix $z\ge 0$ and let
$f(X)=X\wedge z=\fr{1}{2}(X+z-|X-z|),\,\, X\ge 0.$ The properties in this lemma follow easily from
the fact that $f$ is concave and non-decreasing on $[0,\infty)$.
\eepf
\vskip3mm

The next lemma deals with the completeness of
some function spaces (e.g. $L^{\infty}([0,\infty); L^1({\bR}))$).
This completeness is of course important when using for instance the
fixed point theorem of contractive mappings to prove
the existence of solutions of some integral equations. A proof of such a lemma should be able to find from some textbooks. For convenience of the reader we would like to present here a proof.

\begin{lemma}\label{lemma of L(infty)([0,T];L1)}
Let $\Og\subset {\mR}^N$ $($with $N\in{\mN})$ be a Lebesgue measurable set,
let $I=[0,\infty)$ or $I=[0, T]$ with $0<T<\infty$.  Define
\beqa\label{definition of L(infty)([0,T];L1)}
&&L^{\infty}(I; L^1(\Og))=\Big\{ f:  I\times \Og\to [-\infty, \infty]\,\,
\ {\rm is\,\, measurable}\ {\rm on}\ I\times\Og\,\,\Big|\,\,
\sup_{t\in I}\int_{\Og}|f(t,v)|\dd v<\infty\Big\}\qquad\qquad
\eeqa
with the norm
\beqa\label{NORM} \|f\|:=\sup_{t\in I}\|f(t)\|_{L^1(\Og)}=\sup_{t\in I}\|f(t,\cdot)\|_{L^1(\Og)}=\sup_{t\in I}\int_{\Og}|f(t,v)|\dd v,\quad f\in L^{\infty}(I; L^1(\Og)).\eeqa
Then $L^{\infty}(I; L^1(\Og))$ with the norm $\|\cdot\|$ is a Banach space.

Furthermore, let $\{f^n\}_{n=1}^{\infty}$ be a bounded sequence in $L^{\infty}(I; L^1(\Og))$
satisfying that for any $t\in I,$ $\{f^n(t,\cdot)\}_{n=1}^{\infty}$
is a Cauchy sequence in $L^1(\Og)$.  Then there is a function $f\in L^{\infty}(I; L^1(\Og))$
such that
$\|f^n(t,\cdot)-f(t,\cdot)\|_{L^1(\Og)}\,(n\to\infty)$ for all $t\in I$.
Besides if  assume in addition that all $f^n$ are nonnegative on $I\times \Og$, then $f$ is also nonnegative on $I\times \Og$.
\end{lemma}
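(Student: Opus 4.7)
The plan is to prove the Banach-space assertion by the standard Riesz--Fischer subsequence argument, and then to reduce the pointwise-in-$t$ Cauchy case to it by a measurable index selection. The main obstacle is the second assertion: we are only given pointwise-in-$t$ Cauchyness and the subsequences realizing a.e.\ convergence of $\{f^n(t,\cdot)\}$ genuinely depend on $t$, so the real difficulty is to produce a \emph{jointly} measurable representative of the pointwise $L^1(\Og)$-limit.

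For the Banach-space part I would take a Cauchy sequence $\{f^n\}$ in $L^\infty(I;L^1(\Og))$ and extract a subsequence $\{f^{n_k}\}$ with $\|f^{n_{k+1}}-f^{n_k}\|<2^{-k}$. Tonelli applied to the jointly measurable function $(t,v)\mapsto\sum_{k}|f^{n_{k+1}}(t,v)-f^{n_k}(t,v)|$ shows that for every fixed $t\in I$ the series is integrable in $v$, so $\{f^{n_k}(t,v)\}_k$ is Cauchy in $\mR$ for a.e.\ $v\in\Og$ (the null set depending on $t$). Define $f(t,v)$ to be the pointwise limit when it exists in $\mR$, and $0$ otherwise; the set where the limit exists is jointly measurable, so $f$ is jointly measurable. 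A Fatou estimate on $\|f^{n_k}(t,\cdot)-f^{n_m}(t,\cdot)\|_{L^1(\Og)}$ as $m\to\infty$ yields $\sup_{t\in I}\|f^{n_k}(t,\cdot)-f(t,\cdot)\|_{L^1(\Og)}\les 2^{-k+1}$, so $f^{n_k}\to f$ in $\|\cdot\|$, and the Cauchy property of $\{f^n\}$ promotes this to convergence of the full sequence.

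For the second part I would use the following measurable index selection. For each $k\in\mN$ set
\[
N_k(t)=\inf\Bigl\{n\in\mN:\sup_{m,m'\ges n}\|f^m(t,\cdot)-f^{m'}(t,\cdot)\|_{L^1(\Og)}<2^{-k}\Bigr\}.
\]
The map $t\mapsto \|f^m(t,\cdot)-f^{m'}(t,\cdot)\|_{L^1(\Og)}$ is measurable by Tonelli and the supremum is taken over the countable set $\{(m,m'):m,m'\ges n\}$, so $N_k:I\to\mN$ is measurable, everywhere finite by the Cauchy hypothesis, and non-decreasing in $k$. Then $h_k(t,v):=f^{N_k(t)}(t,v)=\sum_{n\ges 1}f^n(t,v)\chi_{\{N_k=n\}}(t)$ is jointly measurable, and the defining property of $N_k$ gives $\sup_{t\in I}\|h_{k+1}(t,\cdot)-h_k(t,\cdot)\|_{L^1(\Og)}\les 2^{-k}$. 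Hence $\{h_k\}$ is Cauchy in $L^\infty(I;L^1(\Og))$ and, by the first part, converges in $\|\cdot\|$ to some $f$. For every fixed $t$, the sequence $\{h_k(t,\cdot)\}$ is a sub-sequence of the $L^1(\Og)$-Cauchy sequence $\{f^n(t,\cdot)\}$ and therefore converges in $L^1(\Og)$ to the common limit $g(t,\cdot)$; uniqueness of $L^1$-limits then forces $f(t,\cdot)=g(t,\cdot)$ a.e.\ in $v$, so $\|f^n(t,\cdot)-f(t,\cdot)\|_{L^1(\Og)}\to 0$ for every $t\in I$. Finally, if each $f^n\ges 0$, then for every $t$ some subsequence of $\{f^n(t,\cdot)\}$ converges a.e.\ on $\Og$ to $f(t,\cdot)$, which forces $f(t,\cdot)\ges 0$ a.e.; replacing $f$ by $\max(f,0)$ yields a jointly measurable, pointwise nonnegative representative that still satisfies all the convergence statements above.
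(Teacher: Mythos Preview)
Your proof is correct, but it follows a genuinely different route from the paper's.

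The paper proves the second assertion first and then deduces completeness from it. For the second part the paper introduces the weight $\rho(t)=e^{-t}$ (or $\rho\equiv 1$ when $I=[0,T]$) and shows, via dominated convergence applied to $\omega_n(t)=\sup_{m\ge n}\|f^m(t)-f^n(t)\|_{L^1(\Og)}$, that $\{f^n\}$ is Cauchy in the joint space $L^1(I\times\Og,\rho(t)\dd t\dd v)$; the limit $h$ obtained there is then reconciled with the pointwise-in-$t$ limit $g(t,\cdot)$ through a careful null-set analysis, and the final $f$ is patched together from $g$ and $h$. You instead prove completeness first by the Riesz--Fischer subsequence argument, and then reduce the pointwise-in-$t$ Cauchy situation to it by your measurable index selection $N_k(t)$, producing a sequence $\{h_k\}$ that is uniformly Cauchy in $t$. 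This is arguably cleaner: it avoids the auxiliary weighted space and the layered null sets $Z_1,Z_2,Z_3$ of the paper, and the joint measurability of the limit comes for free from Part~1. The paper's approach, on the other hand, makes the source of joint measurability very transparent (convergence in a joint $L^1$ space) and does not rely on constructing a $t$-dependent reindexing.

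One small wording issue: in your second part you say that for each fixed $t$ the sequence $\{h_k(t,\cdot)\}$ is a subsequence of $\{f^n(t,\cdot)\}$. Since $N_k(t)$ is only non-decreasing (not necessarily strictly increasing), this is not literally true. It causes no harm, however: from the defining property of $N_k(t)$ one has directly $\|h_k(t,\cdot)-g(t,\cdot)\|_{L^1(\Og)}\le 2^{-k}$ by letting one of the indices in the supremum tend to infinity, which is all you need. You may want to replace the ``subsequence'' sentence with this direct estimate.
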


\bepf It is obvious that $L^{\infty}(I; L^1(\Og))$ is a real normed linear
space.
We first prove the second part in {\bf Step 1} since the first part
(to be proved in {\bf Step 2}) is relatively easy.

{\bf Step 1.} By the assumption and the compactness of $L^1(\Og)$ we know that for any $t\in I$
there is a function $g(t,\cdot)\in L^1(\Og)$ such that
\beqa\label{partialconvergence}
\lim\limits_{n\to \infty}\|f^n(t)-g(t)\|_{L^1(\Og)}=0,\quad \sup_{t\in I}\|g(t)\|_{L^1(\Og)}\le \sup_{n\ge 1}\|f^n\|<\infty.
\eeqa
In order to prove measurability in full variables we consider an integrable weight function:
$$\rho(t)=e^{-t}\quad {\rm if}\quad I=[0,\infty);\quad  \rho(t)=1
\quad {\rm if}\quad  I=[0,T].$$
We need to show that $\{f^n\}_{n=1}^{\infty}$ is a Cauchy sequence in
$L^1(I\times\Og, \rho(t){\rm d}t{\rm d}v)$. In fact we have
\beas&&
\sup_{m\ge n}\intt_{I\times\Og}\rho(t)|f^{m}(t,v)-f^{n}(t,v)|\dd v\dd t
=\sup_{m\ge n}\int_{I}\rho(t)\|f^{m}(t)-f^{n}(t)\|_{L^1(\Og)}\dd t
\\
&&\le \int_{I}\rho(t)\sup_{m\ge n}\|f^{m}(t)-f^{n}(t)\|_{L^1(\Og)}\dd t =
\int_{I}\rho(t) \og_n(t)\dd t
\eeas
where $\og_n(t)=\sup\limits_{m\ge n}\|f^{m}(t)-f^{n}(t)\|_{L^1(\Og)}$. Next, from (\ref{partialconvergence}) we have
$$\lim_{n\to\infty}\og_n(t)=0\quad \forall\, t\in I;\quad
\sup_{n\in {\mN}, t\in I}\og_n(t)\le 2 \sup_{n\ge 1}\|f^n\|<\infty.$$
This together with Lebesgue's dominated convergence implies that
$\int_{I}\rho(t)\og_n(t)\dd t\to 0\,(n\to\infty)$. Thus  $\{f^n\}_{n=1}^{\infty}$ is a Cauchy sequence in
$L^1(I\times\Og, \rho(t){\rm d}t{\rm d}v)$. Since $L^1(I\times\Og, \rho(t){\rm d}t{\rm d}v)$ is complete, there is a function
 $h\in L^1(I\times\Og, \rho(t){\rm d}t{\rm d}v)$ such that
\beqa\label{h}
\int_{I}\rho(t)\|f^{n}(t)-h(t)\|_{L^1(\Og)}{\rm d}t=\intt_{I\times\Og} \rho(t)|f^{n}(t,v)-h(t,v)|\dd v\dd t
 \to 0\quad( n\to \infty).
 \eeqa
Now consider two sets
\beas&& Z_{1}=\{t\in I\,\,|\,\, h(t,\cdot)\not\in L^1(\Og)\,\},\\
&&
Z_{2}=\{t\in I\setminus Z_{1}\,\,|\,\,
 \|f^{n}(t)-h(t)\|_{L^1(\Og)}\not\to 0\,\,(n\to\infty)\,\}.\eeas
We prove that $Z_1, Z_2$ are null sets. From Fubini's theorem we know that ${\rm mes} (Z_{1})=0.$
To prove that ${\rm mes}(Z_{2})=0$, we first use Fatou's lemma to obtain
$$\int_{I}\rho(t)(\liminf\limits_{n\to \infty}\|f^{n}(t)-h(t)\|_{L^1(\Og)}){\rm d}t\le
\liminf\limits_{n\to \infty}\int_{I}\rho(t)\|f^{n}(t)-h(t)\|_{L^1(\Og)}{\rm d}t
=0$$
which implies that
$$Z_{3}:=\{t\in I\setminus Z_{1}\,\,|\,\, \liminf\limits_{n\to \infty}
 \|f^{n_k}(t)-h(t)\|_{L^1(\Og)}\neq 0\}$$
has measure zero. Next take any $t\in I\setminus (Z_{1}\cup Z_{3})$. We have
$h(t)\in L^1(\Og)$ and
$$\liminf_{n\to\infty}\|f^{n}(t)-h(t)\|_{L^1(\Og)}=0 $$
which implies that there exists a subsequence  $\{n_k\}_{k=1}^{\infty}\subset {\mN}$ (depending on t)
such that
$$\lim_{k\to\infty}\|f^{n_{k}}(t)-h(t)\|_{L^1(\Og)}=0.$$
Comparing this with (\ref{partialconvergence}) we conclude that $h(t,v)=g(t,v)$ a.e. $v\in \R^3$. Thus using (\ref{partialconvergence}) again we obtain
$$\|f^{n}(t)-h(t)\|_{L^1(\Og)}=\|f^{n}(t)-g(t)\|_{L^1(\Og)}\to 0\quad(n\to \infty)$$
which implies that $t\in I\setminus Z_{2}$. This proves that $I\setminus (Z_{1}\cup Z_{3})\subset
 I\setminus Z_{2}$, i.e.
$Z_{2}\subset Z_{1}\cup Z_{3}$ and so
${\rm mes}(Z_{2})=0$.
Let $Z=Z_1\cup Z_{2}$ and define
\beqa\label{f}
f(t,v)=
  \left\{
  \begin{aligned}
  &g(t,v)\,,&  (t, v)\in Z\times \Og;
  \\
  &h(t,v)\,,\,\,& t\in (I\setminus Z)\times \Og.
  \\
  \end{aligned}
  \right.
\eeqa
Then  $f$ is measurable on $I\times \Og$ (since $mes(Z)=0$), and  for any $t\in I$,
the function $v\mapsto f(t,v)$ belongs to $L^1(\Og)$. Also it is easily
checked that
\beqa\label{convergenct in every t}
\lim_{n\to\infty}\|f^{n}(t)-f(t)\|_{L^1(\Og)}=0\quad {\rm for \,\,every}\,\, t\in I.\eeqa
From this and the boundedness of $\{f^n\}_{n=1}^{\infty}$ in $L^{\infty}(I; L^1(\Og))$ we have
$ \sup\limits_{t\in I}\|f(t)\|_{L^1(\Og)}\le \sup\limits_{n\ge 1}\|f^n\|<\infty.$
Thus $f\in L^{\infty}(I; L^1(\Og))$.

 Finally suppose in addition that all $f^n$ are nonnegative on $I\times \Og$, then from (\ref{partialconvergence}) and (\ref{h}) we see that $g$, $h$ can be chosen as nonnegative functions on  $I\times \Og$. It follows from the definition (\ref{f}) that $f$ is also nonnegative on $I\times \Og$.
Thus we have finished the proof of the second part of this lemma.

{\bf Step 2.} We now prove the first part of this lemma. Let $\{f^n\}_{n=1}^{\infty}$
be a Cauchy sequence in $L^{\infty}(I; L^1(\Og))$.
Then $\{f^n\}_{n=1}^{\infty}$ is bounded in $L^{\infty}(I; L^1(\Og))$ and recalling the definition of $\|\cdot\|$ we see from {\bf Step 1} that there exists a function $f\in L^{\infty}(I; L^1(\Og))$ such that (\ref{convergenct in every t}) holds true.
From this we have
$\|f^{n}(t)-f(t)\|_{L^1(\Og)}=
\lim\limits_{m\to\infty}\|f^{n}(t)-f^m(t)\|_{L^1(\Og)}$
and so
$$\|f^n-f\|=\sup_{t\in I}\|f^{n}(t)-f(t)\|_{L^1(\Og)}\le
\sup_{m\ge n}\|f^n-f^m\|\to 0\quad (n\to\infty).$$
This proves that $\{f^n\}_{n=1}^{\infty}$ converges
in $L^{\infty}(I; L^1(\Og))$ and so $L^{\infty}(I; L^1(\Og))$ is
a Banach space.
\eepf
\vskip3mm

The lemma below will help us to prove a property that if $f$ is a mild solution of Eq.(\ref{approximate equation 2}) and satisfies
$f(t,v)\le K$ for all $(t,v)\in [0,\infty)\times (\R^3\setminus Z)$ with a null set $Z$
independent of $t$, then $f$ is a mild solution of Eq.(\ref{Equation}).

\begin{lemma}\label{Lemma H} Let $I\subset {\mR}$ be an interval,
$\Og\subset {\mR}^N$ an Lebesgue measurable set.
Let $f, g: I\times \Og\to [-\infty, \infty]$ be Lebesgue measurable
functions satisfying

{\rm (i)}\, for almost every  $v\in \Og,\,
t\mapsto f(t,v),\, t\mapsto g(t,v)$ are  continuous on $I$,

{\rm (ii)}\, $f(t,v)=g(t,v)$  for almost every $(t,v)\in I\times \Og.$

\noindent Then there is a common null set $Z\subset \Og$, such that
$f(t,v)=g(t,v)$ for all $(t,v)\in I\times (\Og\setminus Z)$.
\end{lemma}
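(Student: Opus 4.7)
The plan is to combine Fubini's theorem with the observation that the exceptional set, when sliced at a fixed $v$ where both $t\mapsto f(t,v)$ and $t\mapsto g(t,v)$ are continuous, is open in $I$; an open subset of an interval of measure zero must be empty.

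First I would set up the null sets. By assumption (i) there is a null set $Z_1\subset \Og$ such that for every $v\in \Og\setminus Z_1$, the maps $t\mapsto f(t,v)$ and $t\mapsto g(t,v)$ are continuous on $I$ (as functions into $[-\infty,\infty]$ with its usual compact Hausdorff topology). Consider the measurable set
\[
E=\{(t,v)\in I\times\Og\,:\,f(t,v)\neq g(t,v)\}.
\]
By assumption (ii), $E$ has Lebesgue measure zero in $I\times\Og$. Applying Fubini's theorem in the $(t,v)$-direction, the $v$-sections $E^v:=\{t\in I:(t,v)\in E\}$ are Lebesgue measurable in $I$ and satisfy $\mathrm{mes}(E^v)=0$ for $v$ outside some null set $Z_2\subset\Og$.

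Next I would put $Z=Z_1\cup Z_2$, which is a null set in $\Og$, and verify the claim for $v\in\Og\setminus Z$. Fix such a $v$. The map $t\mapsto f(t,v)-g(t,v)$ is continuous on $I$ into the extended real line (interpreting the difference as $0$ only where both are finite and equal, but more cleanly: continuity of both coordinates into the compact Hausdorff space $[-\infty,\infty]$ implies that the diagonal pullback $\{t\in I:f(t,v)=g(t,v)\}$ is closed in $I$, hence $E^v$ is open in $I$). Since $I$ is an interval, any nonempty open subset of $I$ contains an open subinterval of positive length and therefore has positive Lebesgue measure; but $\mathrm{mes}(E^v)=0$, so $E^v=\emptyset$. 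Thus $f(t,v)=g(t,v)$ for every $t\in I$.

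The argument is essentially three lines of measure theory, and there is no real obstacle — the only subtle point worth handling carefully is the use of extended-real-valued continuity, which I would address simply by invoking that $[-\infty,\infty]$ is a Hausdorff space so that the equality locus of two continuous functions is closed. Once that is in place, the combination of the Fubini null-slice argument and the topological observation that an open null set in an interval is empty yields the required common null set $Z$.
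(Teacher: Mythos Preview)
Your proof is correct and follows essentially the same approach as the paper: both define the null set $Z_1$ from hypothesis (i), apply Fubini to the exception set $E$ to obtain $Z_2$, and then argue that for $v\notin Z_1\cup Z_2$ the slice $E^v$ must be empty. The only cosmetic difference is in this last step: the paper uses that $I\setminus E^v$ is dense and appeals to sequential continuity, while you observe directly that $E^v$ is open (as the complement of the equality locus of two continuous maps into the Hausdorff space $[-\infty,\infty]$) and hence, being null in an interval, empty.
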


\bepf By assumption (i), there is a null set $Z_1\subset \Og$ such that
for every $v\in \Og\setminus Z_1$,
$t\mapsto f(t,v),\, t\mapsto g(t,v)$ are continuous on $I$.  Let
$S=\{(t,v)\in I\times \Og\,|\, f(t,v)\neq g(t,v)\}$, $S_v=\{t\in I\,|\, (t,v)\in S\}, v\in \Og$.
By assumption (ii) we have $mes(S)=0$ and so by Fubini's theorem,
the set $Z_2=\{v\in \Og\,|\, S_v\,\, {\rm is \,\, not\,\, a\,\, null \,\, set}\}$
has measure zero. Let $Z=Z_1\cup Z_2$. Then $mes(Z)=0$ and for any $(t,v)\in I\times(\Og\setminus Z)$,
if $t\in I\setminus S_v$, then $f(t,v)=g(t,v)$; if $t\in S_v$, then since
$S_v$ is a null set which implies that $I\setminus S_v$ is dense in $I$,
there is a sequence $\{t_n\}_{n=1}^{\infty}\subset  I\setminus S_v$ such that
$t_n\to t\,(n\to\infty)$ and so, by continuity,
$f(t,v)=\lim\limits_{n\to\infty}f(t_n,v)=\lim\limits_{n\to\infty}g(t_n,v)=g(t,v)$.
\eepf
\vskip3mm

\begin{lemma}\label{Lemma7}
If $f(t)$ is absolutely continuous on $[0,T]$,
$G(y) $ is Lipschitz continuous on $[a,b]$  and $f([0,T])\subset
[a,b].$ Then
$$ G(f(t))=G(f(0))+\int_{0}^{t}G_{1}(f(\tau))f'(\tau)d\tau
,\ \ \ \ t\in[0,T]$$ where $G_{1}(y)=\frac{d}{dy}G(y)$  a.e. on
$[a,b].$
\end{lemma}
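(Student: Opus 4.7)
\vskip2mm
\noindent\textbf{Proof plan.} The proof has three ingredients. First, I would show that $G\circ f$ is absolutely continuous on $[0,T]$. Letting $L$ be a Lipschitz constant for $G$ on $[a,b]$, for any finite disjoint family $\{(\al_i,\bt_i)\}$ of subintervals of $[0,T]$ one has $\sum_i|G(f(\bt_i))-G(f(\al_i))|\le L\sum_i|f(\bt_i)-f(\al_i)|$, which is small whenever $\sum_i(\bt_i-\al_i)$ is small, by the absolute continuity of $f$. Hence the fundamental theorem of calculus for AC functions yields
\[
G(f(t))=G(f(0))+\int_0^t(G\circ f)'(\tau)\,d\tau,\qquad t\in[0,T].
\]
It therefore suffices to verify the pointwise chain-rule identity $(G\circ f)'(\tau)=G_1(f(\tau))f'(\tau)$ for almost every $\tau\in[0,T]$.

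For this pointwise identity I would partition $[0,T]$ (up to a null set) into three measurable pieces: $A=\{\tau:f'(\tau)=0\}$, $B=\{\tau:f'(\tau)\ne 0$ and $G$ is differentiable at $f(\tau)\}$, and $C=\{\tau:f'(\tau)\ne 0$ and $G$ is not differentiable at $f(\tau)\}$. On $A$, Lipschitz continuity of $G$ forces $|G(f(\tau+h))-G(f(\tau))|\le L|f(\tau+h)-f(\tau)|=o(h)$, so $(G\circ f)'(\tau)=0=G_1(f(\tau))\cdot 0$ under the convention that any product with $0$ is $0$, even when $G_1$ is undefined at $f(\tau)$. On $B$ the classical chain rule applies and gives the identity directly.

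The main obstacle is to show that $mes(C)=0$. The set $N$ of non-differentiability points of $G$ on $[a,b]$ is null (Lipschitz functions are AC, hence a.e.\ differentiable), and $C=\{\tau:f(\tau)\in N,\ f'(\tau)\ne 0\}$. The required fact $mes(C)=0$ is the standard Serrin-type statement that an AC function cannot send a positive-measure set on which $f'\ne 0$ into a null set. One may prove it directly by writing $C=\bigcup_n C_n$ with $C_n=\{\tau\in C:|f'(\tau)|>1/n\}$, covering $N$ by open intervals of total length $<\ep$, and bounding $mes(C_n)$ by the local injectivity estimate $|f(t)-f(s)|\ge (2n)^{-1}|t-s|$ that holds on sufficiently fine subintervals around a.e.\ point of $\{|f'|>1/n\}$. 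The slightly cleaner route I would adopt is to mollify $G$ to a sequence $G_\vep\in C^1([a,b])$ with $\|G_\vep'\|_\infty\le L$, $G_\vep\to G$ uniformly, and $G_\vep'\to G_1$ a.e.\ on $[a,b]$; the classical chain rule gives $G_\vep(f(t))-G_\vep(f(0))=\int_0^t G_\vep'(f(\tau))f'(\tau)\,d\tau$, and passing to the limit via dominated convergence with dominant $L|f'|\in L^1([0,T])$ reduces the argument to exactly the same Serrin-type fact, used now to guarantee $G_\vep'(f(\tau))\to G_1(f(\tau))$ for a.e.\ $\tau\in\{f'\ne 0\}$.
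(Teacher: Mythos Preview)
Your argument is correct. The paper does not actually give a proof of this lemma: it simply cites Mukherjea--Pothoven (Theorems 4.3 and 4.9) and records the single observation that $t\mapsto G(f(t))$ is absolutely continuous on $[0,T]$, which is precisely your first step. Everything beyond that --- the fundamental theorem of calculus for AC functions and the a.e.\ chain-rule identity --- is delegated to the reference.

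Your proposal therefore supplies a self-contained proof where the paper gives none. The decomposition into $A,B,C$ and the identification of the Serrin-type fact (an AC function cannot carry a set of positive measure on which $f'\ne 0$ into a null set) as the only nontrivial point is exactly the standard route, and both of your suggested approaches to it (the direct covering argument on $C_n$, or mollifying $G$ and passing to the limit by dominated convergence) are valid. The mollification route is indeed cleaner to write out, though it still rests on the same null-set fact to justify $G_\vep'(f(\tau))\to G_1(f(\tau))$ a.e.\ on $\{f'\ne 0\}$, as you correctly acknowledge.
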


\bepf See \cite{mukherjea1984real}, p.223 Theorem 4.3, p.263 Theorem 4.9,
and note that
 by assumption of the lemma the function $t\mapsto G(f(t))$ is also absolutely
continuous on $[0,T]$.
\eepf
\vskip3mm

Our next lemma will be used to prove that condition (\ref{condition}) can imply the very high temperature condition (\ref{Kinetic high change}).
\begin{lemma}\label{lemma9}
Given constants $0<p<q<\infty$. Let $\phi$ be measurable on $[0,\infty)$ with
$0\le \phi \le 1$ and $0<\int_0^{\infty}r^{q-1}\phi(r)\dd r<\infty.$ Then
\beqa\label{Tc estimate}
\Big(p\int_0^{\infty}r^{p-1}\phi(r)\dd r\Big)^{\fr{1}{p}}
\le
\Big(q\int_0^{\infty}r^{q-1}\phi(r)\dd r\Big)^{\fr{1}{q}},
\eeqa
and the equality sign holds if and only if there is a constant $0<R<\infty$ such that
$\phi=1_{[0,R]}$ a.e. on $[0,\infty).$
\end{lemma}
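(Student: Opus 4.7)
The plan is to prove the inequality by a bathtub/rearrangement-style comparison: I will show that among all $\phi$ with a prescribed value of the ``$q$-integral'', the $p$-integral is maximized by an indicator function $\mathbf{1}_{[0,R]}$, and for that function both sides of (\ref{Tc estimate}) are equal to $R$.

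Concretely, given $\phi$ satisfying the hypotheses, define $R\in(0,\infty)$ by
$$R^{q}=q\int_{0}^{\infty}r^{q-1}\phi(r)\,\dd r,$$
which is legitimate because the right-hand side lies in $(0,\infty)$. Set $\psi(r)=\phi(r)-\mathbf{1}_{[0,R]}(r)$. Two facts are immediate: first,
$$\int_{0}^{\infty}r^{q-1}\psi(r)\,\dd r=\frac{R^{q}}{q}-\int_{0}^{R}r^{q-1}\,\dd r=0;$$
second, $\psi(r)\le 0$ on $[0,R]$ (since $\phi\le 1$) and $\psi(r)\ge 0$ on $(R,\infty)$ (since $\phi\ge 0$). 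The inequality to prove is equivalent to $\int_{0}^{\infty}r^{p-1}\psi(r)\,\dd r\le 0$, i.e.\ the $p$-integral of $\phi$ cannot exceed that of the indicator $\mathbf{1}_{[0,R]}$ (which is $R^{p}/p$).

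The key observation is that the function $r\mapsto r^{p-q}$ is strictly decreasing on $(0,\infty)$, so that $r^{p-q}\ge R^{p-q}$ on $[0,R]$ and $r^{p-q}\le R^{p-q}$ on $(R,\infty)$. Combined with the sign of $\psi$ on the two intervals, this gives $(r^{p-q}-R^{p-q})\psi(r)\le 0$ for every $r>0$. Hence
$$\int_{0}^{\infty}r^{p-1}\psi(r)\,\dd r=\int_{0}^{\infty}r^{p-q}\bigl(r^{q-1}\psi(r)\bigr)\dd r\le R^{p-q}\int_{0}^{\infty}r^{q-1}\psi(r)\,\dd r=0,$$
which rearranges to $p\int_{0}^{\infty}r^{p-1}\phi(r)\,\dd r\le R^{p}$, and taking $p$-th roots yields (\ref{Tc estimate}).

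For the equality case, equality in (\ref{Tc estimate}) forces $(r^{p-q}-R^{p-q})\psi(r)=0$ for almost every $r>0$; but $r^{p-q}\ne R^{p-q}$ except on the null set $\{R\}$, so $\psi\equiv 0$ a.e., i.e.\ $\phi=\mathbf{1}_{[0,R]}$ a.e. Conversely the indicator clearly realizes equality, since both sides equal $R$. There is no genuine obstacle here; the only thing to be careful about is making sure $R$ is well-defined in $(0,\infty)$ (using the assumption $0<\int r^{q-1}\phi\,\dd r<\infty$) and that $r^{p-1}\phi$ is integrable, which follows by splitting at $r=1$ and comparing $r^{p-1}$ to $r^{q-1}$ on $[1,\infty)$.
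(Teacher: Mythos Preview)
Your argument is correct. The paper itself does not give a proof of this lemma but simply cites \cite{Lu2001On}, Lemma~4; your self-contained bathtub/rearrangement proof (fix $R$ so that $\mathbf{1}_{[0,R]}$ matches the $q$-moment, then use the monotonicity of $r\mapsto r^{p-q}$ together with the sign pattern of $\psi=\phi-\mathbf{1}_{[0,R]}$) is a clean and standard way to establish the result, and your handling of the equality case and of the integrability of $r^{p-1}\phi$ is fine.
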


\bepf See \cite{Lu2001On}, p.382 Lemma 4.
\eepf
\vskip3mm

The next two propositions are concerned with computation formula and basic estimates for
certain general collision integrals. They are very useful in obtaining the pointwise estimates of some kinds of collision integral operators.

\begin{proposition}\label{proposition infty}
Let $W(z,\sg, v)=W_1(|z|,
\la \fr{z}{|z|},\sg\ra \,,v)$ where $W_1(r,s,v)$ is a nonnegative
Lebesgue measurable function on
$[0,\infty)\times[-1,1]\times{\R^3}$. Then \beas &&\intt_{\bRS}
W(v-v_*,\sg,v')\dd \sg
\dd v_*=\intt_{\bRS}\fr{1}{\sin^{3}(\theta/2)}W\Big(\fr{v-v_*}{\sin(\theta/2)},\sg,
v_*\Big)\dd\sg \dd v_*\,, \\ \\
 &&\intt_{\bRS}
W(v-v_*,\sg, v_*')\dd \sg
\dd v_*=\intt_{\bRS}\fr{1}{\cos^{3}(\theta/2)}W\Big(\fr{v-v_*}{\cos(\theta/2)},\sg,
v_*\Big) \dd\sg \dd v_*\,\eeas
where $v'$, $v_*'$ are given in $(\ref{sigma representation})$, $\theta$ is given in $(\ref{(3)})$,
$\sin(\theta/2)=\sqrt{\fr{1-\la {\bf n},\sg\ra}{2}},\,
\cos(\theta/2)=\sqrt{\fr{1+\la {\bf n},\sg\ra}{2}}. $
\end{proposition}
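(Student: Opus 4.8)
The plan is to reduce both identities to a single change-of-variables computation in the integral over $(v_*, \sg)$ with $v$ held fixed. I will treat the first identity in detail; the second is identical after replacing $\sin(\theta/2)$ by $\cos(\theta/2)$, which amounts to swapping the roles of $v'$ and $v_*'$ in \eqref{sigma representation} (equivalently replacing $\sg$ by $-\sg$). Fix $v\in\R^3$ and write $u = v - v_*$, so $v_* = v - u$ and $\dd v_* = \dd u$. From \eqref{sigma representation} we have
$$
v' = \fr{v + v_*}{2} + \fr{|v-v_*|}{2}\sg = v - \fr{u}{2} + \fr{|u|}{2}\sg,
$$
and recalling $\sin(\theta/2) = \sqrt{(1-\la{\bf n},\sg\ra)/2}$ with ${\bf n} = u/|u|$, a direct computation using $|{\bf n}|=1$ gives $|v' - v| = \fr{|u|}{2}|\sg - {\bf n}| = |u|\sin(\theta/2)$. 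More precisely, writing $w := v' - v = -\fr{u}{2} + \fr{|u|}{2}\sg$, one checks that $w$ is a smooth function of $u$ (for fixed $\sg$) and I want to use $w$ as the new integration variable in place of $u$.

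The key step is therefore the change of variables $u \mapsto w$ on $\R^3$ with $\sg$ fixed. First I would verify the pointwise algebraic relations: from $w = -\fr{u}{2} + \fr{|u|}{2}\sg$ one solves for $u$ in terms of $w$ and $\sg$. Taking the inner product with ${\bf n}$ and with $\sg$, and using $|w| = |u|\sin(\theta/2)$, leads to $u = \fr{w}{\sin(\theta/2)} \cdot (\text{something})$; the cleanest route is to observe that the map $u\mapsto w$ is, for each fixed $\sg$, a \emph{linear} map $L_\sg$ composed with the scalar $|u|$-dependence — actually $w$ depends on $u$ through $u$ and $|u|$, so it is positively homogeneous of degree $1$ in $u$. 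A positively $1$-homogeneous bijection of $\R^3$ that maps rays to rays has a Jacobian that is constant along rays; combining this with $|w| = \sin(\theta/2)\,|u|$ forces $|\det(\partial w/\partial u)| = \sin^3(\theta/2)$. Hence $\dd u = \sin^{-3}(\theta/2)\,\dd w$, and under this substitution $v_* = v - u = v - \fr{w}{\sin(\theta/2)} \cdot (\cdots)$; here I must check that $v - u$ expressed in the new variable is exactly $v_*$ viewed as the point such that $\fr{v-v_*}{\sin(\theta/2)}$ appears, i.e. that $u = \fr{v-v_*}{1}$ and after substitution the argument of $W$ becomes $\Big(\fr{v-v_*}{\sin(\theta/2)}, \sg, v_*\Big)$ as claimed. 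Finally one uses the defining structure $W(z,\sg,v) = W_1(|z|, \la z/|z|,\sg\ra, v)$ together with the fact that $w$ and $u$ are positive scalar multiples of each other along each ray (so $w/|w| = u/|u| = {\bf n}$ is unchanged), which is exactly what makes $W_1$ transform correctly.

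The main obstacle I expect is the clean justification of the Jacobian factor $\sin^{-3}(\theta/2)$ and the bookkeeping of the direction vectors: one must be careful that $w = v'-v$ is not simply a scalar multiple of $u$, so the substitution is genuinely a nonlinear (though $1$-homogeneous) change of variables, and the equality $|\det(\partial w/\partial u)| = |w/u|^3 = \sin^3(\theta/2)$ needs the homogeneity argument rather than a naive scaling. A safe alternative, which I would fall back on if the homogeneity argument feels delicate, is to compute the Jacobian directly in spherical-type coordinates adapted to $\sg$: write $u = r{\bf e}$ with $r>0$, ${\bf e}\in\SP^2$, note $w = \fr{r}{2}(\sg - {\bf e}) \cdot$(scalar) with $|w| = r\sin(\theta/2)$ where now $\theta$ depends on ${\bf e}$ via $\la{\bf e},\sg\ra$; then $w = \rho\,{\bf e}$ with $\rho = r\sin(\theta/2)$, and $\dd w = \rho^2\,\dd\rho\,\dd{\bf e} = r^2\sin^3(\theta/2)\,\dd r\,\dd{\bf e} = \sin^3(\theta/2)\,\dd u$, wait—one must recheck that $w$ is radial in this frame; if it is not, spherical coordinates centered instead at the direction of $w$ restore the computation. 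Once the single-variable substitution is pinned down, re-inserting the $\sg$-integral and invoking Tonelli (all integrands nonnegative) completes the proof, and the second identity follows verbatim with $\cos$ in place of $\sin$.
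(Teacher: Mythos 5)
The paper does not prove this proposition itself (it cites \cite{Lu2006On}, Proposition 2.1), so your attempt must stand on its own, and unfortunately it does not: the central claims of your change of variables are false. From (\ref{sigma representation}), $w:=v'-v=-\fr{u}{2}+\fr{|u|}{2}\sg=\fr{|u|}{2}(\sg-{\bf n})$ is a positive multiple of $\sg-{\bf n}$, \emph{not} of $\pm{\bf n}$, so your assertion that ``$w/|w|=u/|u|={\bf n}$ is unchanged'' is wrong. This kills the argument twice over. First, with $\sg$ held fixed the integrand does not transform into the claimed right-hand side: putting $\wt{v}_*=v'$, the unit vector $\fr{v-\wt{v}_*}{|v-\wt{v}_*|}=\fr{{\bf n}-\sg}{|{\bf n}-\sg|}$ satisfies $\la \fr{{\bf n}-\sg}{|{\bf n}-\sg|},\sg\ra=-\sin(\theta/2)$, so the second argument of $W_1$ becomes $-\sin(\theta/2)$ instead of $\cos\theta$, and the corresponding $\sin(\wt{\theta}/2)=\sqrt{(1+\sin(\theta/2))/2}$ makes $|v-\wt{v}_*|/\sin(\wt{\theta}/2)\neq |u|$. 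Second, the Jacobian is not $\sin^{3}(\theta/2)$: at $u=(1,0,0)$, $\sg=(0,0,1)$ (so $\theta=\pi/2$) the matrix $\p w/\p u$ is lower triangular with diagonal $(-\fr12,-\fr12,-\fr12)$, so $|\det(\p w/\p u)|=\fr18=\fr{\sin^{2}(\theta/2)}{4}$, while $\sin^{3}(\theta/2)=2^{-3/2}$. Homogeneity of degree one only tells you the Jacobian is constant along rays; it does not give $|\det|=(|w|/|u|)^{3}$ unless the induced map on directions is measure preserving, and here ${\bf n}\mapsto\fr{\sg-{\bf n}}{|\sg-{\bf n}|}$ distorts surface measure by the factor $\fr{1}{4\sin(\theta/2)}$. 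Your spherical-coordinate fallback fails for the same reason: $w$ is not radial in the frame of $u$.

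The missing idea is that $\sg$ must be transformed together with $v_*$. Writing $v_*=v-r{\bf e}$ one has $v'=v-\rho\og$ with $\og=\fr{{\bf e}-\sg}{|{\bf e}-\sg|}$ and $\rho=r\sin(\theta/2)$; then $r^{2}\dd r=\fr{\rho^{2}}{\sin^{3}(\theta/2)}\dd\rho$ and $\dd{\bf e}=4\sin(\theta/2)\dd\og$, which produces the factor $\fr{4}{\sin^{2}(\theta/2)}$ and leaves $\sg$ sitting at angle $\fr{\pi}{2}+\fr{\theta}{2}$ from the new direction $\og$. One must then, for each fixed $\og$, replace $\sg$ by the unit vector $\sg'$ at angle $\theta$ from $\og$ in the same azimuthal plane; this substitution has $\dd\sg=\fr{1}{4\sin(\theta/2)}\dd\sg'$, restores the angular argument $\la\og,\sg'\ra=\cos\theta$ and the radial argument $\rho/\sin(\theta/2)=r$, and combines with the previous factors to give exactly $\fr{1}{\sin^{3}(\theta/2)}$, with $\og$ and $\sg'$ each sweeping out all of $\SP^2$ as required (Tonelli applies since $W\ge 0$). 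Without this second substitution you derive a different, incorrect identity. The same remarks apply verbatim to the second formula with $\cos(\theta/2)$ in place of $\sin(\theta/2)$.
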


\bepf See \cite{Lu2006On}, p.1715 Proposition 2.1.
\eepf
\vskip3mm

\begin{proposition}\label{proposition2}
 Let $p,q,\gamma\ge 0, 0\le g\in L^1_{p+\gamma}({\bR})\cap L^{\infty}_{p}({\bR}), 0\le f\in L^1_{q+\gamma}({\bR})\cap L^{\infty}_{q}({\bR})$.
Then
$$\intt_{{\bRS}}|v-v_*|^{\gamma}\la v'\ra^{p}\la v_*'\ra ^q
g(v')f(v_*'){\rm d}\sg
{\rm d}v_*
\le 2^{(3+\gamma)/2}|{\bS}|\big(\|f\|_{L^{\infty}_q}\|g\|_{L^{1}_{p+\gamma}}
+|\|g\|_{L^{\infty}_p}\|f\|_{L^{1}_{q+\gamma}}
\big)\la v\ra ^{\gamma}
$$
for all $v\in \R^3$, where $v'$, $v_*'$ are given in $(\ref{sigma representation})$.
\end{proposition}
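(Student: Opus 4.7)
I would split the $\sigma$-integration into the two hemispheres $\bS_{+}=\{\sigma\in\bS : \la n,\sigma\ra \ge 0\}$ and $\bS_{-}=\{\sigma\in\bS : \la n,\sigma\ra \le 0\}$, with $n=(v-v_{*})/|v-v_{*}|$. On $\bS_{+}$ one has $\cos(\theta/2)\ge 1/\sqrt{2}$, while on $\bS_{-}$ one has $\sin(\theta/2)\ge 1/\sqrt{2}$. Writing the left-hand side as $I=I_{+}+I_{-}$, where $I_{\pm}$ denotes the integral restricted to $\bR\times\bS_{\pm}$, I would handle the two pieces symmetrically via the two change-of-variable identities of Proposition \ref{proposition infty}.

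For $I_{+}$, the first step is to discard $g$ pointwise using $\la v'\ra^{p}g(v')\le \|g\|_{L^{\infty}_{p}}$ (valid a.e.\ on $\bR\times\bS_{+}$, since the map $(v_{*},\sigma)\mapsto v'$ is non-degenerate away from $v_{*}=v$), which reduces matters to bounding $\iint_{\bR\times\bS_{+}}|v-v_{*}|^{\gamma}\la v_{*}'\ra^{q}f(v_{*}')\,\dd\sigma\,\dd v_{*}$. The second step is to apply the second identity of Proposition \ref{proposition infty} to $W(z,\sigma,w)=\mathbf{1}_{\la z/|z|,\sigma\ra\ge 0}\,|z|^{\gamma}\la w\ra^{q}f(w)$, which is of the required form $W_{1}(|z|,\la z/|z|,\sigma\ra,w)$. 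The resulting integral carries an extra factor $\cos^{-3-\gamma}(\theta/2)\le 2^{(3+\gamma)/2}$ on $\bS_{+}$ and the primed variable $v_{*}'$ is replaced by the genuine dummy $v_{*}$. Using the elementary inequality $|v-v_{*}|\le \la v\ra\la v_{*}\ra$ (a direct consequence of $2|v||v_{*}|\le 1+|v|^{2}|v_{*}|^{2}$) to factor out $\la v\ra^{\gamma}$, and noting that $|\bS_{+}|=\tfrac{1}{2}|\bS|$, I would obtain
\[
I_{+}\le 2^{(3+\gamma)/2}\,\tfrac{1}{2}|\bS|\,\la v\ra^{\gamma}\|g\|_{L^{\infty}_{p}}\|f\|_{L^{1}_{q+\gamma}}.
\]

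The bound for $I_{-}$ is the mirror image: estimate $\la v_{*}'\ra^{q}f(v_{*}')\le \|f\|_{L^{\infty}_{q}}$ and apply the first identity of Proposition \ref{proposition infty} (which substitutes $v_{*}$ for $v'$) to $W(z,\sigma,w)=\mathbf{1}_{\la z/|z|,\sigma\ra\le 0}\,|z|^{\gamma}\la w\ra^{p}g(w)$, using now $\sin^{-3-\gamma}(\theta/2)\le 2^{(3+\gamma)/2}$ on $\bS_{-}$. This yields the symmetric bound
\[
I_{-}\le 2^{(3+\gamma)/2}\,\tfrac{1}{2}|\bS|\,\la v\ra^{\gamma}\|f\|_{L^{\infty}_{q}}\|g\|_{L^{1}_{p+\gamma}}.
\]
Adding $I_{+}$ and $I_{-}$ then produces the stated inequality (in fact with the sharper constant $\tfrac{1}{2}|\bS|$ rather than $|\bS|$). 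No step is a genuine obstacle; the main points to check are that $W$ depends on $z$ only through $|z|$ and $\la z/|z|,\sigma\ra$ so that Proposition \ref{proposition infty} applies, and the bookkeeping of the Jacobian powers $\cos^{-3-\gamma}(\theta/2)$ and $\sin^{-3-\gamma}(\theta/2)$ generated by rescaling the first argument of $W$.
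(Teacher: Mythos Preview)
Your proposal is correct and follows essentially the same route as the paper's proof: split the sphere according to the sign of $\la n,\sigma\ra$, bound out the appropriate factor by its weighted $L^\infty$ norm on each hemisphere, apply the two change-of-variable identities of Proposition~\ref{proposition infty}, and finish with $|v-v_*|\le \la v\ra\la v_*\ra$. The only difference is cosmetic: the paper drops the hemisphere indicator after the change of variables and integrates over the full $\bS$, whereas you keep it and note $|\bS_\pm|=\tfrac12|\bS|$, which indeed yields the slightly sharper constant you observe.
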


\bepf From Proposition \ref{proposition infty} and the inequality $|v-v_*|\le \la v\ra \la v_*\ra$ we have
\beas&&\intt_{{\bRS}}|v-v_*|^{\gamma}\la v'\ra^{p}\la v_*'\ra ^q
g(v')f(v_*'){\rm d}\sg
{\rm d}v_*
\\
&&=\intt_{{\bRS}}|v-v_*|^{\gamma}\la v'\ra^{p}\la v_*'\ra ^q
g(v')f(v_*')1_{ \{\la
\fr{v-v_*}{|v-v_*|}, \sg\ra\le 0\}}{\rm d}\sg
{\rm d}v_*\\
&&\quad +\intt_{{\bRS}}|v-v_*|^{\gamma}\la v'\ra^{p}\la v_*'\ra ^q
g(v')f(v_*')1_{\{
\la \fr{v-v_*}{|v-v_*|}, \sg\ra> 0\}}{\rm d}\sg
{\rm d}v_*
\\
&&\le \|f\|_{L^{\infty}_q}
\intt_{{\bRS}}\fr{|v-v_*|^{\gamma}}{\sin^{\gamma}(\theta/2)}\la  v_*\ra^{p}
g(v_*) \fr{1}{\sin^3(\theta/2)}1_{\{\sin(\theta/2)\ge \sqrt{\fr{1}{2}}\}}{\rm d}\sg
{\rm d}v_*\\
&&\quad +\|g\|_{L^{\infty}_p}\intt_{{\bRS}}\fr{|v-v_*|^{\gamma}}{\cos^{\gamma}(\theta/2)}\la v_*\ra ^q
f(v_*)\fr{1}{\cos^3(\theta/2)}1_{\{\cos(\theta/2)\ge \sqrt{\fr{1}{2}}\}}
{\rm d}\sg{\rm d}v_*
\\
&&\le 2^{(3+\gamma)/2}\|f\|_{L^{\infty}_q}
\intt_{{\bRS}}|v-v_*|^{\gamma}\la v_*\ra^{p}
g(v_*) {\rm d}\sg
{\rm d}v_*+ 2^{(3+\gamma)/2}\|g\|_{L^{\infty}_p}\intt_{{\bRS}}|v-v_*|^{\gamma}\la v_*\ra ^q
f(v_*){\rm d}\sg{\rm d}v_*
\\
&&\le 2^{(3+\gamma)/2}|{\bS}|\big(\|f\|_{L^{\infty}_q}\|g\|_{L^{1}_{p+\gamma}}
+\|g\|_{L^{\infty}_p}\|f\|_{L^{1}_{q+\gamma}}
\big)\la v\ra ^{\gamma},\qquad v\in {\bR}.\eeas
\eepf
\vskip3mm

 The proposition below will be used to prove the existence of mild solutions of Eq.(\ref{approximate equation 1}) in Section 3.
\begin{proposition}\label{I(f,g)}
For any $n\in \mathbb{N}$, let
$B_n=B\wedge n$ with $B$ the collision kernel satisfying $(\ref{condition of kernel})$.
For any
 $0\le f,g\in L^1({\bR})$ define
\beas
&&{\cal I}_{n}(f,g)=
\inttt_{{\bRRS}}
B_n(v-v_*,\sg)\big|(f'\wedge n)(f_*'\wedge n)
(1+ f\wedge K+ f_*\wedge K)\\
&&\,\,\,\,\,\quad\qquad\qquad \qquad \qquad  \qquad-(g'\wedge n)(g_*'\wedge n)
(1+ g\wedge K+ g_*\wedge K)\big|
{\rm d}\sg{\rm d}v_*{\rm d}v.
\eeas
Then
\beqa\label{In(f,g)}&&
{\cal I}_{n}(f,g)
\le (1+2K)|{\bS}|n (\|f\|_{L^1}+\|g\|_{L^1})\|f-g\|_{L^1}
+2^{7/2}|{\bS}|n\|g\wedge n\|_{L^{\infty}}\|g\|_{L^1}\|f-g\|_{L^1}.\qquad\eeqa
\end{proposition}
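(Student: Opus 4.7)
The plan is to write the integrand of $\mathcal{I}_n(f,g)$ as a telescoping sum of three pieces and estimate each separately. Setting $A=f'\wedge n,\ B=g'\wedge n,\ C=f_*'\wedge n,\ D=g_*'\wedge n$ and $E=1+f\wedge K+f_*\wedge K,\ F=1+g\wedge K+g_*\wedge K$, the elementary identity $ACE-BDF=(A-B)CE+B(C-D)E+BD(E-F)$ together with $E-F=(f\wedge K-g\wedge K)+(f_*\wedge K-g_*\wedge K)$ decomposes the integrand. In each piece I would apply Lemma~\ref{LemmaF} to dominate every $|x\wedge z-y\wedge z|$ by $|x-y|$, and use $(\cdot)\wedge K\le K$ to bound the passive factor $E$ (or its pieces) by $1+2K$.

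For the first two pieces the factor $|f-g|$ (resp.\ $|f_*-g_*|$) sits at post-collisional arguments $v'$ (resp.\ $v_*'$), which is inconvenient for controlling it by an $L^1$-norm. I would therefore invoke the prime-exchange identity~(\ref{identity}) to move the $|f-g|$ factor onto $v$ (resp.\ $v_*$); after the exchange the $\wedge K$ factors are evaluated at $v',v_*'$, but $(\cdot)\wedge K\le K$ still gives $E\le 1+2K$. Combining this with $B_n\le n$ and $f\wedge n\le f$, $g\wedge n\le g$ yields
$$n(1+2K)|\bS|\,\|f-g\|_{L^1}\|f\|_{L^1}\qquad\text{and}\qquad n(1+2K)|\bS|\,\|f-g\|_{L^1}\|g\|_{L^1}$$
for pieces one and two respectively; their sum is the first term on the right-hand side of~(\ref{In(f,g)}).

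For the third piece the problematic factor is $(g'\wedge n)(g_*'\wedge n)$, which lives at post-collisional variables while $|f-g|,|f_*-g_*|$ sit at the pre-collisional ones. I would use $B_n\le n$ and then, for each fixed $v$, apply Proposition~\ref{proposition2} with $\gamma=p=q=0$, taking both arguments to be $g\wedge n$, to obtain
$$\intt_{\bRS}(g'\wedge n)(g_*'\wedge n)\,\dd\sg\,\dd v_*\le 2^{5/2}|\bS|\,\|g\wedge n\|_{L^\infty}\|g\|_{L^1}.$$
Because the $\sigma$-representation~(\ref{sigma representation}) presents $v',v_*'$ as symmetric functions of $(v,v_*)$, the analogous bound with $v_*$ fixed (integrating in $v$ and $\sg$) holds with the same constant; summing the two symmetric contributions introduces the factor $2$ that turns $2^{5/2}$ into $2^{7/2}$ in the second term of~(\ref{In(f,g)}).

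The only delicate point in the scheme is the symmetric treatment of the third piece: one must recognize that Proposition~\ref{proposition2} applies in the ``transposed'' orientation because $(g'\wedge n)(g_*'\wedge n)$, as a function of $(v,v_*,\sg)$, is symmetric under $v\leftrightarrow v_*$. Everything else is routine bookkeeping driven by Lemma~\ref{LemmaF}, the prime-exchange identity, and the crude bounds $B_n\le n$ and $(\cdot)\wedge n\le (\cdot)$.
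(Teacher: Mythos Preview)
Your proposal is correct and follows essentially the same route as the paper: the same telescoping decomposition $ACE-BDF=(A-B)CE+B(C-D)E+BD(E-F)$, Lemma~\ref{LemmaF} to strip the truncations, the prime-exchange identity~(\ref{identity}) on the first two pieces, and Proposition~\ref{proposition2} with $p=q=\gamma=0$ on the third. The only cosmetic difference is that the paper bounds $E\le 1+2K$ \emph{before} applying the prime exchange, and handles the two sub-terms of the third piece via the change of variables $(v,v_*)\mapsto (v_*,v)$ to reduce both to $\int_{\bR}|f-g|\intt_{\bRS}(g'\wedge n)(g_*'\wedge n)\,\dd\sg\,\dd v_*\,\dd v$, whereas you invoke the symmetry of $(v',v_*')$ in $(v,v_*)$ directly; these are the same argument.
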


\bepf Compute
\beas&&
{\cal I}_{n}(f,g)
=\inttt_{{\bRRS}}
B_n(v-v_*,\sg)\big|(f'\wedge n)(f_*'\wedge n)
(1+ f\wedge K+ f_*\wedge K)\\
&&\quad -(g'\wedge n)(g_*'\wedge n)
(1+ g\wedge K+ g_*\wedge K)\big|
{\rm d}\sg{\rm d}v_*{\rm d}v
\\
&&\le
(1+2K)\inttt_{{\bRRS}}
B_n(v-v_*,\sg)\big((|f' -g'|(f_*'\wedge n)
\\
&&\quad +(g'\wedge n)|f_*'-g_*'|)+ (g'\wedge n)(g_*'\wedge n)(
|f-g|+|f_*-g_*|)\big)
{\rm d}\sg{\rm d}v_*{\rm d}v
\\
&&\le (1+2K)|{\bS}|n\intt_{{\bRR}}|f -g|f_*{\rm d}v_*{\rm d}v
+(1+2K)|{\bS}|n
\intt_{{\bRR}}g|f_*-g_*|{\rm d}v_*{\rm d}v\\
&&\quad +2 n\int_{{\bR}}|f-g|\intt_{{\bRS}}(g'\wedge n)(g_*'\wedge n)
{\rm d}\sg{\rm d}v_*{\rm d}v
\eeas
where we used (\ref{identity}).
According to Proposition \ref{proposition2} (with $p=q=\gamma=0$)  we have for any $v\in {\bR}$
$$
\intt_{{\bRS}}(g'\wedge n)(g_*'\wedge n)
{\rm d}\sg{\rm d}v_*
\le 2^{5/2}|{\bS}|\|g\wedge n\|_{L^{\infty}}\|g\wedge n\|_{L^1}. $$
Combining the above gives (\ref{In(f,g)}).
\eepf
\vskip3mm
The following proposition is concerned with estimates of collision gain operator $Q^{+}(\cdot,\cdot)$.
  \begin{proposition}\label{proposition 2.3}
   Let $Q^+(\cdot,\cdot)$ be defined by $(\ref{Q+ bilinear})$ with the collision kernel $B$ satisfying $(\ref{condition of kernel})$ and let $0\le f,g\in L_1^1(\R^3)$. Then
  \beqa\label{exchange}
  Q^{+}(f,g)(v)=Q^{+}(g,f)(v) \quad \forall v\in \R^3.
  \eeqa
  Furthermore if $0\le f\in L_2^1(\R^3)\cap L^{\infty}(\R^3)$, then
   \beqa \label{Q1}
   Q^+(f,f)(v)
   &\le &
   2^5\pi b\|f\|_{L^{\infty}}\|f\|_{L_1^1}\la v\ra  \quad \forall\, v\in {\R}^3
   \\
   \label{Q+(f0,ft)L1}
   \|Q^+(f,f)\|_{L^1}
   &\le &4\pi b \|f\|_{L_1^1}^2
 \\
  \label{Q+f0f0}
   \|Q^+(f,f)\|_{L^2}
   &\le & 2^{3+\fr{1}{4}}\pi b\|f\|_{L^{\infty}}^{\fr{1}{2}}\|f\|_{L^1}^{\fr{1}{2}}\|f\|_{L_2^1}.
   \eeqa
   \end{proposition}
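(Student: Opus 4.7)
The symmetry identity (\ref{exchange}) follows from the change of variable $\sigma\mapsto-\sigma$ in the inner integral over $\bS$: by (\ref{sigma representation}) this involution exchanges the post-collisional velocities $v'$ and $v_*'$, while by the representation (\ref{about v-v*}) the kernel $B(v-v_*,\sigma)$ is left invariant. For the pointwise bound (\ref{Q1}), I plan to apply Proposition \ref{proposition2} with $g=f$, $p=q=0$ and $\gamma=1$ to the bilinear integral $\iint|v-v_*|f(v')f(v_*')\,d\sigma dv_*$, and then invoke the upper bound $B(v-v_*,\sigma)\le b|v-v_*|$ from (\ref{condition of kernel}); this accounts for the multiplicative constant $b\cdot 2\cdot 2^{(3+1)/2}\cdot|\bS|=2^5\pi b$.

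For the $L^1$ bound (\ref{Q+(f0,ft)L1}), the idea is to apply the exchange-of-primes identity (\ref{identity}) with $F(v',v_*',v,v_*)=f(v')f(v_*')$, which gives
$$\|Q^+(f,f)\|_{L^1}=\iiint_{\bRRS} B(v-v_*,\sigma)f(v)f(v_*)\,d\sigma\, dv_*\, dv.$$
Integrating $B$ over $\sigma$ with $B\le b|v-v_*|$ and $|\bS|=4\pi$, and then using the elementary inequality $|v-v_*|\le\langle v\rangle\langle v_*\rangle$ (which follows from $(|v|+|v_*|)^2\le\langle v\rangle^2\langle v_*\rangle^2$, itself a rearrangement of $2|v||v_*|\le 1+|v|^2|v_*|^2$), yields the required bound $4\pi b\|f\|_{L^1_1}^2$.

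The $L^2$ bound (\ref{Q+f0f0}) is the main technical point. I plan to square the pointwise bound (\ref{Q1}) and keep one factor $Q^+(f,f)(v)$ on the right, so that
$$\|Q^+(f,f)\|_{L^2}^2\le 2^5\pi b\,\|f\|_{L^\infty}\|f\|_{L^1_1}\int_{\R^3}\langle v\rangle\,Q^+(f,f)(v)\,dv.$$
The remaining first moment is handled by another application of (\ref{identity}), which converts $\iiint B\langle v\rangle f(v')f(v_*')$ into $\iiint B\langle v'\rangle f(v)f(v_*)$. Then the $\sigma\mapsto-\sigma$ symmetrization gives $\int B\langle v'\rangle\,d\sigma=\tfrac12\int B(\langle v'\rangle+\langle v_*'\rangle)\,d\sigma$, and the Cauchy--Schwarz inequality $\langle v'\rangle+\langle v_*'\rangle\le\sqrt{2}\sqrt{\langle v'\rangle^2+\langle v_*'\rangle^2}$ combined with the energy conservation identity $\langle v'\rangle^2+\langle v_*'\rangle^2=\langle v\rangle^2+\langle v_*\rangle^2$ lifts the inner estimate back to $v,v_*$. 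A last application of $B\le b|v-v_*|$ and $|v-v_*|\le\langle v\rangle\langle v_*\rangle$ reduces the integral to a multiple of $\|f\|_{L^1_1}\|f\|_{L^1_2}$; the final Cauchy--Schwarz interpolation $\|f\|_{L^1_1}\le\|f\|_{L^1}^{1/2}\|f\|_{L^1_2}^{1/2}$ then produces the announced form.

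The hardest step is the optimization of the constant $2^{3+1/4}\pi b$ in the $L^2$ estimate: it requires a careful pairing of the pointwise bound (\ref{Q1}) with the symmetrized first-moment estimate and the subsequent interpolation, so that the product of prefactors is as small as possible and the final exponents on $\|f\|_{L^\infty}$, $\|f\|_{L^1}$, and $\|f\|_{L^1_2}$ come out to be exactly $1/2$, $1/2$, and $1$.
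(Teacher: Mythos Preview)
Your treatment of (\ref{exchange}), (\ref{Q1}), and (\ref{Q+(f0,ft)L1}) is essentially the paper's proof: the $\sigma\mapsto-\sigma$ involution for the symmetry, Proposition~\ref{proposition2} with $p=q=0,\gamma=1$ for the pointwise bound, and the exchange identity (\ref{identity}) together with $|v-v_*|\le\la v\ra\la v_*\ra$ for the $L^1$ bound.

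For (\ref{Q+f0f0}) your route is genuinely different from the paper's, and it does \emph{not} reach the stated constant $2^{3+\frac14}\pi b$. Carrying out your plan---bounding one factor of $Q^+(f,f)$ by (\ref{Q1}), then estimating $\int\la v\ra Q^+(f,f)\,dv$ via the exchange identity, the symmetrization $\la v'\ra\to\tfrac12(\la v'\ra+\la v_*'\ra)$, and $\la v'\ra+\la v_*'\ra\le\sqrt2\,\sqrt{\la v\ra^2+\la v_*\ra^2}\le\sqrt2(\la v\ra+\la v_*\ra)$---one gets
\[
\int_{\bR}\la v\ra Q^+(f,f)(v)\,dv\le 4\sqrt2\,\pi b\,\|f\|_{L^1_1}\|f\|_{L^1_2},
\]
and hence, after $\|f\|_{L^1_1}^2\le\|f\|_{L^1}\|f\|_{L^1_2}$,
\[
\|Q^+(f,f)\|_{L^2}^2\le 2^{15/2}\pi^2 b^2\,\|f\|_{L^\infty}\|f\|_{L^1}\|f\|_{L^1_2}^2,
\]
i.e.\ a constant $2^{15/4}\pi b$, a factor $\sqrt2$ too large. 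The loss is already built into (\ref{Q1}): that bound uses Proposition~\ref{proposition2} with $\gamma=1$, which carries an extra $2^{1/2}$ compared to $\gamma=0$, and no amount of optimization in the first-moment step recovers it.

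The paper instead applies Cauchy--Schwarz \emph{inside} the inner integral before any pointwise estimate:
\[
\Big(\intt_{\bRS}|v-v_*|f'f_*'\,d\sg dv_*\Big)^2\le\Big(\intt_{\bRS}|v-v_*|^2 f'f_*'\,d\sg dv_*\Big)\Big(\intt_{\bRS}f'f_*'\,d\sg dv_*\Big).
\]
The second factor is bounded pointwise by Proposition~\ref{proposition2} with $p=q=\gamma=0$, giving $2^{5/2}|\bS|\|f\|_{L^\infty}\|f\|_{L^1}$; the first, after integrating in $v$ and using (\ref{identity}), becomes $\iiint|v-v_*|^2 ff_*\le|\bS|\|f\|_{L^1_2}^2$ directly, with no need for the interpolation $\|f\|_{L^1_1}\le\|f\|_{L^1}^{1/2}\|f\|_{L^1_2}^{1/2}$. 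This yields exactly $2^{13/2}\pi^2 b^2$ for the square, hence the claimed $2^{13/4}\pi b$. Since the explicit constant in (\ref{Q+f0f0}) is used downstream (e.g.\ in Proposition~\ref{most important} and ultimately in the smallness condition (\ref{condition})), you should switch to this Cauchy--Schwarz splitting to prove the proposition as stated.
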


   \bepf Recalling (\ref{sigma representation}) and (\ref{about v-v*}),  (\ref{exchange}) can be easily proved by using the change of variables $(v_*,\si) \to (v_*,-\si)$.
      Next, it follows from (\ref{condition of kernel}) and Proposition \ref{proposition2} (with $p=q=0$, $\gamma=1$) that
   \beas
   &&Q^+(f,f)(v)
   \le
   b\iint\limits_{\R^3\times\SP^2}|v-v_*|f'{f_*}'\dd \si\dd v_{*}
   \le 8b|\bS|\|f\|_{L^{\infty}}\|f\|_{L_1^1}\la v\ra  \quad \forall\, v\in {\R}^3
   \eeas
   which proves (\ref{Q1}).
   To prove ($\ref{Q+(f0,ft)L1}$), we have
   \beas
   &&\|Q^+(f,f)\|_{L^1}
   \le
   b\inttt_{\bRRS}\la v\ra \la v_*\ra f f_{*}\dd \si \dd v_* \dd v
   \le b|\bS|\|f\|_{L_1^1}^2 \quad \forall t\ge 0.
   \eeas
    Finally, using H\"{o}lder's inequality, Proposition \ref{proposition2} (with $p=q=\gamma=0$) and the fact that $|v-v_*|\le \la v\ra \la v_*\ra$ we have
 \beas
\|Q^+(f,f)\|_{L^2}^2
   &\le &
   \int_{\bR}\Big(b\intt_{\bRS}|v-v_*|f'f_{*}'\dd \si \dd v_*\Big)^2\dd v
   \\
   &\le&
   b^2\int_{\bR}
   \Big(\intt_{\bRS}|v- v_*|^2 f'f_{*}'\dd \si \dd v_*\Big)
   \Big(\intt_{\bRS} f'f_{*}'\dd \si \dd v_*\Big)
   \dd v
   \\
   &\le &
    4\sqrt{2}b^2|\bS|\|f\|_{L^{\infty}}\|f\|_{L^1}
   \inttt_{\bRRS}\la v\ra^2\la v_*\ra ^2 ff_{*}\dd \si \dd v_*\dd v
   \\
   &= &
   4\sqrt{2}b^2|\bS|^2\|f\|_{L^{\infty}}\|f\|_{L^1}\|f\|_{L_2^1}^2 .
   \eeas
This gives (\ref{Q+f0f0}).
   \eepf
   \vskip3mm

 The last proposition below gives us very important tools which make the multi-step iterations of the collision gain operator $Q^{+}(\cdot,\cdot)$ work well. It should be noted that
in this proposition  the constants given in the explicit version are helpful in applications as will be seen
 in the proof of Theorem {\ref{main results}.

  \begin{proposition}\label{most important}
  Let $Q^+(\cdot,\cdot)$ be defined by $(\ref{Q+ bilinear})$ with the collision kernel $B$ satisfying $(\ref{condition of kernel})$.  Then

  (a)  If $0\le f\in L^1(\R^3)\cap L^2(\R^3)$, $0\le g,h\in L^1(\R^3)$,
  then
  \beqa \label{Q+(ft,Q+(fs,fs))}
  Q^+\big(f,Q^+(g,h)\big)(v)
  \le
  2^{5+\fr{2}{3}}{\pi}^{\fr{4}{3}}b^2\|f\|^{\fr{1}{3}}_{L^1}\|f\|^{\fr{2}{3}}_{L^2}
  \|g\|_{L^1}\|h\|_{L^1} \quad for \,\, all\,\, v\in {\R}^3 .
  \eeqa

  (b)  If $1\le p\le 2$ and $0\le f\in L_{\fr{2-p}{p}}^1(\R^3)$, $0\le g,h\in L_{\fr{3-p}{p}}^1(\R^3)$, then
  \beqa\label{Q+(f,Q+(g,h)) Lp}
  \big\|Q^+\big(f,Q^+(g,h)\big)\big\|_{L^p}
  \le  2^{4+\fr{2}{p}}{\pi}^{1+\fr{1}{p}}b^2
  \|f\|_{L_{\fr{2-p}{p}}^1}\|g\|_{L_{\fr{3-p}{p}}^1}\|h\|_{L_{\fr{3-p}{p}}^1}
  \eeqa
 and consequently $($with $p=1, 2$ $)$
 \beqa\label{Q+(Q+(ft,Q+(fr,fr)),Q+(fs,fs))}
  Q^+\big(Q^+(f,Q^+(g,g)),Q^+(h,h)\big)(v)
  \le
  2^{11} \pi^{3}b^4
  \|f\|_{L^1}^{\fr{2}{3}}\|f\|_{L^1_1}^{\fr{1}{3}}
  \|g\|_{L_{\fr{1}{2}}^1}^{\fr{4}{3}}
  \|g\|_{L_2^1}^{\fr{2}{3}}
  \|h\|_{L^1}^2
  \eeqa
 for all $v\in \R^3.$
  \end{proposition}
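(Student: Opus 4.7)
The plan is to establish parts (a) and (b) separately and then combine them for the consequence (\ref{Q+(Q+(ft,Q+(fr,fr)),Q+(fs,fs))}). The $1/3$--$2/3$ split in the norms of $f$ in part (a) is set up so that, after the outer $Q^+$ consumes $\phi=Q^+(f,Q^+(g,g))$, it interpolates exactly between the $p=1$ and $p=2$ cases of part (b).

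For part (b), I would first establish the endpoint estimates $p=1$ and $p=2$ and then interpolate by Riesz--Thorin. The $p=1$ endpoint uses the exchange-of-prime identity (\ref{identity}) twice together with the hard-sphere kernel bound $B\le b|v-v_*|$: first to reduce $\|Q^+(f,Q^+(g,h))\|_{L^1}$ to $4\pi b\,\|f\|_{L^1_1}\,\|Q^+(g,h)\|_{L^1_1}$ via $|v-v_*|\le\la v\ra\la v_*\ra$, and then to bound $\|Q^+(g,h)\|_{L^1_1}$ by $8\pi b\,\|g\|_{L^1_2}\|h\|_{L^1_2}$ via $\la v'\ra\le\la v\ra+\la v_*\ra$, giving the $p=1$ constant $2^{6}\pi^2b^2$. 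The $p=2$ endpoint is the technical heart: I would use duality, testing against $\phi\in L^2(\R^3)$; applying (\ref{identity}) relocates $\phi$ to $v'$, and Cauchy--Schwarz on $\SP^2$ combined with the surface-to-volume identity $\int_{\SP^2}\phi(v')^2\,d\sg=\frac{4}{|v-v_*|^2}\int_{S_{|v-v_*|/2}((v+v_*)/2)}\phi^2\,dS$ converts the spherical integral of $\phi^2$ back into an $L^2(\R^3)$-quantity of $\phi$, after which Proposition \ref{proposition2} packages the $g,h$ factors and yields the $L^2$-endpoint bound with constant $2^5\pi^{3/2}b^2$.

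For part (a), I would switch to the $\og$-representation (\ref{omega representation1})-(\ref{omega representation2}) and exploit the algebraic structure that, under the decomposition $u=v_*-v=s\og+u_\perp$ with $s=\la u,\og\ra$, one has $v'=v+s\og$ (independent of $u_\perp$) and $v_*'=v+u_\perp$ (independent of $s$), together with $\wt{B}(v-v_*,\og)\le 2b|s|$. This produces a tensor-product bound
\[
Q^+(f,Q^+(g,h))(v)\le 2b\int_{\SP^2}\Big(\int_{\R}|s|f(v+s\og)\,ds\Big)\Big(\int_{\og^\perp}Q^+(g,h)(v+w)\,dw\Big)\,d\og.
\]
The plane-integral factor containing $Q^+(g,h)$ is expanded by the same $\og'$-representation trick; after repacking and an exchange of primes it reduces to $\|g\|_{L^1}\|h\|_{L^1}$-type quantities. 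The line-integral factor is recognised, after averaging over $\og$, as a Riesz-type potential of $f$ in $\R^3$, to which a Hardy--Littlewood--Sobolev inequality is applied to give control by $\|f\|_{L^{3/2}}\le\|f\|_{L^1}^{1/3}\|f\|_{L^2}^{2/3}$. The main obstacle I anticipate is organising these two $\og$-slice estimates so that their product, averaged over $\og$ by Hölder with matching dual exponents, yields exactly the claimed $\|f\|_{L^1}^{1/3}\|f\|_{L^2}^{2/3}\|g\|_{L^1}\|h\|_{L^1}$ combination with the constant $2^{5+2/3}\pi^{4/3}b^2$.

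Finally, (\ref{Q+(Q+(ft,Q+(fr,fr)),Q+(fs,fs))}) follows by setting $\phi=Q^+(f,Q^+(g,g))$ and applying part (a) to $Q^+(\phi,Q^+(h,h))$ with $(f,g,h)\mapsto(\phi,h,h)$, which already produces the $\|h\|_{L^1}^2$ factor. Bounding $\|\phi\|_{L^1}$ by part (b) at $p=1$ gives $\|f\|_{L^1_1}\|g\|_{L^1_2}^2$, bounding $\|\phi\|_{L^2}$ by part (b) at $p=2$ gives $\|f\|_{L^1}\|g\|_{L^1_{1/2}}^2$, and raising to the powers $1/3$ and $2/3$ and multiplying the constants via
\[
(2^{5+2/3}\pi^{4/3}b^2)\cdot(2^{6}\pi^2 b^2)^{1/3}\cdot(2^{5}\pi^{3/2}b^2)^{2/3}=2^{11}\pi^3 b^4
\]
reassembles exactly the weighted norms $\|f\|_{L^1}^{2/3}\|f\|_{L^1_1}^{1/3}\|g\|_{L^1_{1/2}}^{4/3}\|g\|_{L^1_2}^{2/3}\|h\|_{L^1}^2$ with coefficient $2^{11}\pi^3 b^4$, as stated.
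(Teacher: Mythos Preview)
Your route is genuinely different from the paper's, and it is worth contrasting the two.

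\textbf{What the paper does.} The paper does not separate (a) and (b) into different techniques. It invokes a kernel representation (Lemma~2.1 of Lu--Mouhot) writing
\[
Q^+\big(f,Q^+(g,h)\big)(v)=\inttt_{\bRRR}K_B(v,v_*,w,w_*)\,f(v_*)g(w)h(w_*)\,\dd v_*\dd w\dd w_*,
\]
and then obtains the single pointwise bound $K_B\le 16\pi b^2\,|v-v_*|^{-1}\,\zeta(\cdots)$. Part (a) follows immediately by integrating out $w,w_*$ and bounding the remaining Riesz potential $\int |v-v_*|^{-1}f(v_*)\,\dd v_*$ by an elementary split (Lemma~2.5 of the same reference), not by HLS. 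Part (b) is obtained by computing $\big(\int_{\bR}K_B^{\,p}\,\dd v\big)^{1/p}$ directly for each $p\in[1,2]$; the crucial point is that the compact support of $\zeta$ makes the integral $J_p(w,w_*)$ finite, and no interpolation is used.

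\textbf{Your part (a).} Your $\og$-representation decomposition is correct and in fact rederives, by a different calculation, exactly the paper's bound $Q^+(f,Q^+(g,h))(v)\le 16\pi b^2\|g\|_{L^1}\|h\|_{L^1}\int |v-v_*|^{-1}f(v_*)\,\dd v_*$: the key plane--integral step $\int_{\og^\perp}Q^+(g,h)(v+w)\,\dd w\le 4\pi b\|g\|_{L^1}\|h\|_{L^1}$ does hold (test against a thin slab and use that $\int_{\bS}{\bf 1}_{\{|\la\sg,\og\ra-c|<\vep\}}\,\dd\sg\le 4\pi\vep$), and the line--integral average over $\og$ gives $2\int|v-v_*|^{-1}f(v_*)\,\dd v_*$. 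Two small corrections: the H\"older-in-$\og$ you anticipate is unnecessary, since the plane factor is bounded uniformly; and Hardy--Littlewood--Sobolev does \emph{not} give the $L^\infty$ endpoint for $I_2$ in $\bR$ --- you need the direct $R$-split to reach $C\|f\|_{L^1}^{1/3}\|f\|_{L^2}^{2/3}$, as the paper does.

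\textbf{Your part (b): a genuine gap.} Your $p=2$ endpoint argument does not close as written. After duality and Cauchy--Schwarz on $\bS$ you arrive at $\big(\int_{S_{|v-v_*|/2}((v+v_*)/2)}\phi^2\,\dd S\big)^{1/2}$, a surface integral over a \emph{single} sphere depending on $(v,v_*)$; this is not an ``$L^2(\bR)$-quantity of $\phi$'' and there is no way to recover $\|\phi\|_{L^2}$ from it without an additional integration over centers and radii, which your scheme has already spent on $f$ and $G=Q^+(g,h)$. Proposition~\ref{proposition2} is also not applicable at that stage, since after the exchange $g,h$ sit inside $G(v_*)$, not at $(v',v_*')$. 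Separately, ``Riesz--Thorin'' is not the right tool when the domain weights $\la v\ra^{(2-p)/p}$ and $\la v\ra^{(3-p)/p}$ vary with $p$; you would need a multilinear Stein--Weiss/complex interpolation statement, which you do not invoke. The paper sidesteps all of this by computing $\|K_B(\cdot,v_*,w,w_*)\|_{L^p}$ directly for every $p$, which is both shorter and produces the exact constant $2^{4+2/p}\pi^{1+1/p}b^2$ at once.

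Your derivation of (\ref{Q+(Q+(ft,Q+(fr,fr)),Q+(fs,fs))}) from (a) and the $p=1,2$ cases of (b) is correct and identical to the paper's.
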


  \bepf From Lemma 2.1 in \cite{xuguang2013measure}
 we have
  \beqa\label{Q(f,Q(g,h))}
  &&Q^+\big(f,Q^+(g,h)\big)(v)\label{Lu Mouhot}
  =\inttt_{ {\R}^3\times{\R}^3\times{\R}^3 }
  K_{B}(v,v_*,w,w_*)f(v_*)g(w)h(w_*)\dd v_* \dd w \dd w_*
  \eeqa
  where $K_B:{\R^3}\times{\R}^3\times{\R}^3\times{\R}^3\to [0,\infty)$ is defined by
   \beqa
   K_{B}(v,v_*,w,w_*)=
  \left\{
  \begin{aligned}
  \fr{8}{|v-v_*||w-w_*|}\zeta\Big(\Big\la{\bf n}, \fr{2v-(w+w_*)}{|w-w_*|}\Big\ra\Big)
\int_{{\mathbb S}^1({\bf n})}\fr{B(w-w_*,\si)B(w'-v_*,\si')}
  {|w'-v_*|}{\dd}^{\perp}\omega
  \\
  {\rm if}\quad |v-v_*||w-w_*|\neq 0\qquad
  \\
  0 \quad\quad\quad\quad\quad \quad\quad\quad\quad\quad \quad\quad\quad\quad\quad
  {\rm if}\quad |v-v_*||w-w_*|= 0\qquad
  \end{aligned}
  \right.
  \eeqa
  where
  \beas
  &&{\bf n}=\fr{v-v_*}{|v-v_*|},\quad
  w'=\fr{w+w_*}{2}+\fr{w+w_*}{2}\si,\quad
  \si'=\fr{2v-v_*-w'}{|2v-v_*-w'|},\\
  &&\zeta(t):=(1-t^2)^{\fr{3}{2}}{\bf 1}_{(-1,1)}(t), \quad  t\in \R,
  \quad {\mathbb S}^1({\bf n})=\{\omega\in {\mathbb S}^2|\omega\perp{\bf n} \}
  \eeas
  and ${\dd}^{\perp}\omega$ denotes the sphere measure element of ${\mathbb S}^1({\bf n})$.
  If $|v-v_*||w-w_*|\neq 0,$ then we compute
  \beqa\label{KB}
  &&K_{B}(v,v_*,w,w_*)=
  \fr{8}{|v-v_*||w-w_*|}\zeta\Big(\Big\la{\bf n}, \fr{2v-(w+w_*)}{|w-w_*|}\Big\ra\Big)
  \int_{{\mathbb S}^1({\bf n})}\fr{B(w-w_*,\si)B(w'-v_*,\si)}
  {|w'-v_*|}{\dd}^{\perp}\omega
  \no\\
  &&\le
  b^2 \fr{8}{|v-v_*||w-w_*|}\zeta\Big(\Big\la{\bf n}, \fr{2v-(w+w_*)}{|w-w_*|}\Big\ra\Big)
  \int_{{\mathbb S}^1({\bf n})}
 \fr{|w-w_*||w'-v_*|}
  {|w'-v_*|}
  {\dd}^{\perp}\omega
  \no\\
  &&
  = 8b^2 |\mathbb S^1|\fr{1}{|v-v_*|}\zeta\Big(\Big\la{\bf n}, \fr{2v-(w+w_*)}{|w-w_*|}\Big\ra\Big)
  \le 8b^2 |\mathbb S^1|\fr{1}{|v-v_*|}.
  \eeqa
  Thus from (\ref{Lu Mouhot}), (\ref{KB}), and Lemma 2.5 in \cite{xuguang2013measure} we have for any $v\in \R^3$
  \beas
  &&Q^+\big(f,Q^+(g,h)\big)(v)
  \le
  \inttt_{{\R}^3\times{\R}^3\times{\R}^3}8b^2|\mathbb S^1|\fr{1}{|v-v_*|}f(v_*)g(w)h(w_*)\dd v_* \dd w \dd w_*
  \\
  &&\le 8b^2|\mathbb S^1|\|g\|_{L^1}\|h\|_{L^1}\int_{\R^3}\fr{1}{|v-v_*|}f(v_*)\dd v_*
  \le
  16b^2|\mathbb S^1||\bS|^{\fr{1}{3}}\|f\|^{\fr{1}{3}}_{L^1}
  \|f\|^{\fr{2}{3}}_{L^2}\|g\|_{L^1}\|h\|_{L^1}
  \eeas
  which gives (\ref{Q+(ft,Q+(fs,fs))}).

  Define
  \beas
  T_{w,w_*}(f)(v):=\int_{\bR}K_{B}(v,v_*,w,w_*)f(v_*)\dd v_*\quad {\rm for\,\, all}\,\, v,w,w_*\in \R^3.
  \eeas
  Then from (\ref{Q(f,Q(g,h))})  we have for any $1\le p\le 2$
  \beqa\label{Propositon 2.4 L1}
  \big\|Q^+\big(f,Q^+(g,h)\big)\big\|_{L^p}\le \intt_{\bRR}\|T_{w,w_*}(f)\|_{L^p}g(w)h(w_*)\dd w\dd w_*.
  \eeqa
  Next we define for any $1\le \alpha\le 2$
  \beas
  J_{\alpha}(w,w_*):=\int_{\bR}\fr{1}{|v-v_*|^{\alpha}}{\bf 1}_{(-1,1)}\Big(\Big\la{\bf n}, \fr{2v-(w+w_*)}{|w-w_*|}\Big\ra\Big)\dd v,\quad w,w_*\in \R^3.
  \eeas
Making the change of variable
  \beas
  v=v_*+\fr{|w-w_*|}{2}r\si , \quad \dd v=\Big|\fr{w-w_*}{2}\Big|^3 r^2\dd r\dd \si
  \eeas
we have
  \beas
  J_{\alpha}(w,w_*)=\Big|\fr{w-w_*}{2}\Big|^{3-\alpha}\int_{\bS}I(\la u, \si\ra)\dd \si
  \eeas
  where
  \beas
  I(\la u, \si\ra)=\int_0^{\infty}r^{2-\alpha}{\bf 1}_{\{|r+\la u, \si\ra|<1\}}\dd r,\quad u=\fr{2v_*-(w+w_*)}{|w-w_*|}.
  \eeas
   If $\la u, \si\ra\ge 1$, then $I(\la u, \si\ra)=0$; if $\la u, \si\ra< 1$, then
  \beas
  I(\la u, \si\ra)\le 2(1+|u|)^{2-\alpha}\le 2\Big(\fr{|\fr{w-w_*}{2}|+|\fr{w+w_*}{2}|+|v_*|}{|\fr{w-w_*}{2}|}\Big)^{2-\alpha}
  \le \fr{2(\la v_*\ra \la w\ra \la w_*\ra)^{2-\alpha}}{|\fr{w-w_*}{2}|^{2-\alpha}}
  \eeas
  where we used
  \beas
  &&|w-w_*|\le \langle w\rangle \langle w_*\rangle,
  \no\\
  &&|w'-v_*|=|w'-v+v-v_*|=\sqrt{|v-v_*|^2+|v-w'|^2},
  \no\\
  &&|v-v_*|\le|w'-v_*|\le \fr{|w+w_*|}{2}+\fr{|w-w_*|}{2}+|v_*|
  \le \langle w\rangle \langle w_*\rangle\langle v_*\rangle.
  \eeas
  It follows that for any $w,w_*\in \R^3$
  \beqa\label{J1}
  J_{\alpha}(w,w_*)=\Big|\fr{w-w_*}{2}\Big|^{3-\alpha}\int_{\bS}I(\la u, \si\ra)\dd \si
  \le |{\bS}||w-w_*|(\la v_*\ra \la w\ra \la w_*\ra)^{2-\alpha}.
  \eeqa
  Combining this with (\ref{KB}) we deduce
  \beas
  &&\Big(\int_{\bR}[K_B(v,v_*,w,w_*)]^p\dd v\Big)^{\fr{1}{p}}
  \le 8b^2|\mathbb S^1|
  (J_{p}(w,w_*))^{\fr{1}{p}}
\\
&&  \le 8b^2|\mathbb S^1|
  |{\bS}|^{\fr{1}{p}}|w-w_*|^{\fr{1}{p}}(\la v_*\ra \la w\ra \la w_*\ra)^{\fr{2-p}{p}}
  \\
  &&\le
  8b^2|\mathbb S^1||{\bS}|^{\fr{1}{p}}
  (\la w\ra \la w_*\ra)^{\fr{1}{p}}
  (\la v_*\ra)^{\fr{2-p}{p}}
  (\la w\ra \la w_*\ra)^{\fr{3-p}{p}},\quad 1\le p\le 2.
  \eeas
Therefore using Lemma 2.4 in \cite{xuguang2013measure} and the assumption on $f$ we conclude that for any $1\le p\le 2$
  \beqa
  \|T_{w,w_*}(f)\|_{L^p}
   \le 8b^2|\mathbb S^1||\bS|^{\fr{1}{p}}
  (\la w\ra \la w_*\ra)^{\fr{3-p}{p}} \|f\|_{L_{\fr{2-p}{p}}^1}\quad \forall\, w,w_*\in \R^3  \eeqa
  which together with (\ref{Propositon 2.4 L1})  proves (\ref{Q+(f,Q+(g,h)) Lp}).

 Finally we prove (\ref{Q+(Q+(ft,Q+(fr,fr)),Q+(fs,fs))}). According to (\ref{Q+(ft,Q+(fs,fs))}) and (\ref{Q+(f,Q+(g,h)) Lp}) (with $p=1,2$) we have for all $v\in \R^3$
  \beas
  &&Q^+\big(Q^+(f,Q^+(g,g)),Q^+(h,h)\big)(v)
  \no\\
  &&\le
  16b^2|\mathbb S^1||\bS|^{\fr{1}{3}}\|h\|_{L^1}^2
  \|Q^+(f,Q^+(g,g))\|_{L^1}^{\fr{1}{3}}
  \|Q^+(f,Q^+(g,g))\|_{L^2}^{\fr{2}{3}}
  \no\\
  &&\le
  16b^2|\mathbb S^1||\bS|^{\fr{1}{3}}\|h\|_{L^1}^2
  \big(8b^2|\mathbb S^1||\bS|\|f\|_{L_1^1}\|g\|_{L_2^1}^2\big)^{\fr{1}{3}}
  \big(8b^2|\mathbb S^1||\bS|^{\fr{1}{2}}
  \|f\|_{L^1}\|g\|^2_{L_{\fr{1}{2}}^1}\big)^{\fr{2}{3}}
  \no\\
  &&=
  2^{11} \pi^{3}b^4
  \|f\|_{L^1}^{\fr{2}{3}}\|f\|_{L^1_1}^{\fr{1}{3}}
  \|g\|_{L_{\fr{1}{2}}^1}^{\fr{4}{3}}
  \|g\|_{L_2^1}^{\fr{2}{3}}
  \|h\|_{L^1}^2.
  \eeas
  \eepf
\vskip3mm

   \section{Mild solutions of cutoff equations}
   This section is a further preparation for obtaining mild solutions of Eq.(\ref{approximate equation 2}). We will first prove the existence of mild solutions $\{f^n\}_{n=1}^{\infty}$ of Eq.(\ref{approximate equation 1}). Then we will show that $\{f^n(t)\}_{n=1}^{\infty}$ satisfy the $L^1$ relative compactness conditions, etc.\, Throughout this section, the constant $K>0$ in Eq.$(\ref{approximate equation 1})$ is fixed.

\subsection{Existence of mild solutions of cutoff equations}
In order to prove the global in time existence of mild solutions of Eq.(\ref{approximate equation 1}), we first use fixed point theorem of contractive self-mappings to
 prove the existence of mild solutions of Eq.(\ref{approximate equation 1}) on
 $[0,T]\times{\R}^3$ for some $0<T<\infty$, then we will prove the conservation of the mass of $f$ and extend $f$ to $[0,\infty)\times {\R}^3$ so that the extended $f$ is a mild solution of Eq.(\ref{approximate equation 1}) on $[0,\infty)\times{\R}^3$.

\begin{proposition}\label{fn}
For any $0\le f_0\in L_2^1({\bR})$ and any $n\in \mathbb{N}$, there exists a conservative mild solution $f^n$
of Eq.$(\ref{approximate equation 1})$ on $[0,\infty)\times{\R}^3$
with the initial datum $f_0$ such that $($see  Remark \ref{remark of mild solutions}$)$
\beas
f^n(t,v)=f_0(v)+\int_{0}^{t}Q_{n,K}(f^n)(\tau,v){\rm d}\tau\qquad \forall\, (t,v)\in [0,\infty)\times\R^3
\eeas
where $Q_{n,K}(\cdot)$ is defined in $(\ref{Qnk})$ with collision kernel $B$ satisfying $(\ref{condition of kernel})$.
\end{proposition}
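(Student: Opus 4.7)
My plan is to obtain $f^n$ on a short interval $[0,T]$ via Banach's fixed-point theorem applied to a Duhamel-type reformulation that manifestly preserves nonnegativity, and then extend to $[0,\infty)$ using conservation of mass and energy. The key ingredient is that the triple cutoff ($B_n=B\wedge n$, $f\wedge n$ in the loss term, $f\wedge K$ in the correction factors) makes $Q_{n,K}^{\pm}$ globally Lipschitz on $L^1$-bounded sets via the explicit bound of Proposition \ref{I(f,g)}; the main subtlety is ensuring nonnegativity of the iterates.

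Setting $\tilde L(f)(v):=\iint_{\bRS}B_n(v-v_*,\sg)(f_*\wedge n)(1+f'\wedge K+f_*'\wedge K)\,\dd\sg\,\dd v_*$, one has $Q_{n,K}^-(f)(v)=(f(v)\wedge n)\tilde L(f)(v)$, and the equation $\p_t f=Q_{n,K}(f)$ is equivalent to $\p_t f+f\,\tilde L(f)=Q_{n,K}^+(f)+(f-f\wedge n)\tilde L(f)$. Multiplying by the integrating factor $\exp\bigl(\int_0^t\tilde L(f)(\tau,v)\,\dd\tau\bigr)$ and integrating yields the fixed-point equation $f=\Psi(f)$ with
\[
\Psi(f)(t,v):=f_0(v)\,E_f(0,t;v)+\int_0^t E_f(\tau,t;v)\Bigl[Q_{n,K}^+(f)(\tau,v)+(f-f\wedge n)(\tau,v)\,\tilde L(f)(\tau,v)\Bigr]\dd\tau,
\]
where $E_f(\tau,t;v):=\exp\bigl(-\int_\tau^t\tilde L(f)(s,v)\,\dd s\bigr)\in(0,1]$. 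When $f\ge 0$ every factor on the right-hand side is nonnegative, since $f-f\wedge n=(f-n)^+\ge 0$ and $Q_{n,K}^+,\tilde L\ge 0$; thus $\Psi$ preserves nonnegativity, and reversing the Duhamel computation shows that any fixed point of $\Psi$ solves the mild form of Eq.~(\ref{approximate equation 1}).

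To execute the contraction, fix $R:=2\|f_0\|_{L^1_2}$ and consider $Y_T^R:=\{f\in L^\infty([0,T];L^1_2(\R^3)):f\ge 0,\ \sup_t\|f(t)\|_{L^1_2}\le R\}$, which, as a Fatou-closed subset of $L^\infty([0,T];L^1(\R^3))$, is complete in the sup-$L^1$ norm by Lemma \ref{lemma of L(infty)([0,T];L1)}. The cutoffs yield the uniform pointwise bound $0\le\tilde L(f)(v)\le (1+2K)|\bS|n\|f\|_{L^1}\le C_0(n,K)R$, so $E_f\in[\mathrm{e}^{-C_0RT},1]$. Combining Proposition \ref{I(f,g)} (which directly controls $\|Q_{n,K}^{+}(f)-Q_{n,K}^{+}(g)\|_{L^1}$, and by the exchange identity (\ref{identity}) also $\|Q_{n,K}^{-}(f)-Q_{n,K}^{-}(g)\|_{L^1}$) with the Lipschitz bound $\|f\wedge n-g\wedge n\|_{L^1}\le\|f-g\|_{L^1}$ from Lemma \ref{LemmaF}, and the resulting Lipschitz dependence of $E_f$ and $\tilde L(f)$ on $f$, one obtains
\[
\|\Psi(f)(t)-\Psi(g)(t)\|_{L^1}\le T\,C_1(n,K,R)\|f-g\|,\qquad \sup_t\|\Psi(f)(t)\|_{L^1_2}\le R
\]
for $T=T_*(n,K,R)$ sufficiently small, so $\Psi$ is a contracting self-map on $Y_T^R$. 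Banach's theorem yields a unique nonnegative fixed point $f^n\in Y_T^R$; since $B_n\le n$ and $\|f^n(t)\|_{L^1}\le R$, the integrals $Q_{n,K}^{\pm}(f^n)(t,v)$ are finite for \emph{every} $v\in\R^3$, so the null set of Definition \ref{mild solutions} is empty, as announced in Remark \ref{remark of mild solutions}.

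Finally, applying the exchange identity (\ref{identity}) to $\int Q_{n,K}(f^n)(v)\phi(v)\,\dd v$ for $\phi\in\{1,v_i,|v|^2\}$, and using that $B_n$ and each of the cutoffs $\wedge n,\wedge K$ are invariant under $(v,v_*)\leftrightarrow(v',v_*')$ together with $v+v_*=v'+v_*'$ and $|v|^2+|v_*|^2=|v'|^2+|v_*'|^2$, shows the integrand is antisymmetric under this exchange; hence the integral vanishes. Integrating Eq.~(\ref{approximate equation 1}) in $t$ then yields conservation of mass, momentum and energy on $[0,T]$. Because the contraction time $T_*$ depends only on $n,K$ and $\|f_0\|_{L^1_2}$, which is preserved, one restarts the scheme at $t=T$ from $f^n(T,\cdot)$ and iterates, extending $f^n$ to a conservative mild solution on $[0,\infty)\times\R^3$. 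The main obstacle is engineering the nonnegativity of iterates, resolved by the Duhamel reformulation above; every remaining step reduces mechanically to Proposition \ref{I(f,g)}, Lemma \ref{LemmaF}, and the completeness provided by Lemma \ref{lemma of L(infty)([0,T];L1)}.
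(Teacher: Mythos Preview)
Your Duhamel reformulation is natural, but the contraction step has a real gap. The claimed ``Lipschitz dependence of $E_f$ and $\tilde L(f)$ on $f$'' is the problem: since $\tilde L(f)(v)=\iint B_n(f_*\wedge n)(1+f'\wedge K+f_*'\wedge K)\,\dd\sg\,\dd v_*$ carries post-collisional factors $f'\wedge K,f_*'\wedge K$, there is no pointwise-in-$v$ Lipschitz bound for $\tilde L(f)-\tilde L(g)$. What the $L^1$-contraction of $\Psi$ actually requires is control of $\int h(v)\,|\tilde L(f)-\tilde L(g)|(v)\,\dd v$ for $h\in\{f_0,\ \Psi(g),\ (f-f\wedge n)\}$. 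After Lemma~\ref{LemmaF} and the exchange identity~(\ref{identity}), the cubic contribution becomes
\[
\int_{\bR} |f-g|(v)\Big[\iint_{\bRS} B_n\, h(v')\,(g(v_*')\wedge n)\,\dd\sg\,\dd v_*\Big]\dd v,
\]
and bounding this by $C\|f-g\|_{L^1}$ needs the bracket in $L^\infty_v$. By Proposition~\ref{proposition2} that holds when $h\in L^\infty$, but under the sole hypothesis $f_0\in L^1_2$ none of $f_0,\Psi(g),(f-f\wedge n)$ are bounded; using $B_n\le n$ (or $B_n\le b|v-v_*|$) and Proposition~\ref{proposition infty} produces the divergent angular integral $\int_{\bS}\sin^{-3}(\theta/2)\,\dd\sg$, so the bracket is generically $+\infty$. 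Hence the asserted contraction is not established.

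The paper avoids this by \emph{not} building nonnegativity into the fixed-point map. It iterates the undamped map $J_n(f)(t,v)=f_0(v)+\int_0^t Q_{n,K}(|f|)(\tau,v)\,\dd\tau$ on all of $L^\infty([0,T];L^1)$; contraction then follows directly from Proposition~\ref{I(f,g)} applied to $|f|,|g|$ (no exponentials, no $\tilde L$). Nonnegativity of the fixed point is proved \emph{a posteriori} via Lemma~\ref{Lemma7} and Gronwall on $(-f^n)^+$, using $Q_{n,K}^-(|f^n|)(v)\le (1+2K)n|\bS|\,\|f^n\|_{L^1}\,|f^n(v)|$. Your conservation-law and extension arguments are fine; only the contraction needs to be redone along these lines, or else you must supply an $L^1$-Lipschitz estimate for the cubic part of $\tilde L$ that does not rely on an $L^\infty$ bound for $f_0$.
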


\bepf  {\bf Step 1.}  For any $T\in (0,\infty)$ recall the definition of  $L^{\infty}([0, T]; L^1({\bR}))$ in (\ref{definition of L(infty)([0,T];L1)}) and define
$${\cal A}_T=\{ f\in L^{\infty}([0, T]; L^1({\bR})) \,|\,\,
\|f\|:=\sup_{t\in [0, T]}\|f(t)\|_{L^1}
\le 2\|f_0\|_{L^1}\}$$
with the distance
$$\|f-g\|=\sup_{t\in [0, T]}\|f(t)-g(t)\|_{L^1},\quad f,g\in {\cal A}_T.$$
From Lemma \ref{lemma of L(infty)([0,T];L1)}, it is obvious that $ {\cal A}_T $ is a closed subset of the Banach space
$(L^{\infty}([0, T]; L^1({\bR}))$ and so $ {\cal A}_T $ is complete. Fix $n\in {\mN}$.
Let
$$ J_n(f)(t,v)=f_0(v)+\int_{0}^{t}Q_{n,K}(|f|)(\tau,v){\rm d}\tau,\quad (t,v)\in [0,\infty)\times\R^3,\quad f\in
{\cal A}_T.
$$
To obtain contractiveness of $J_n$  we choose
$$T=T_n=\fr{1}{16(1+2K+2^{3/2})|{\bS}|n\|f_0\|_{L^1}}.$$
Now we need to prove that
$J_n:{\cal A}_{T_n}\to {\cal A}_{T_n}$ and
\beqa\label{contractiven condition}
\sup_{t\in [0, T_n]}\|J_n(f)(t)-J_n(g)(t)\|_{L^1}
\le \fr{1}{2}\sup_{t\in [0, T_n]}\|f(t)-g(t)\|_{L^1},\quad f,g\in {\cal A}_{T_n}.
\eeqa
In fact for any $f\in {\cal A}_{T_n}$  and any $t\in [0,T_n]$ we have
\beas&& \int_{{\bR}}\big|Q_{n,K}(|f|)(t,v)\big|{\rm d}v
\le
\inttt_{{\bRRS}}
B_n(v-v_*,\sg)(|f'|\wedge n)(|f_*'|\wedge n)
(1+| f|\wedge K+| f_*|\wedge K)
{\rm d}\sg{\rm d}v_*{\rm d}v
\\
&&\quad +
\inttt_{{\bRRS}}
B_n(v-v_*,\sg)(|f|\wedge n)(|f_*|\wedge n)
(1+| f'|\wedge K+| f_*'|\wedge K)
{\rm d}\sg{\rm d}v_*{\rm d}v
\\
&&=2{\cal I}_n(|f(t)|,0)\le 2(1+2K)|{\bS}|n (\|f(t)\|_{L^1})^2
\le 8(1+2K)|{\bS}|n (\|f_0\|_{L^1})^2<\infty
\eeas
where we used (\ref{identity}) and Proposition \ref{I(f,g)}.
This implies that $(t,v)\mapsto J_n(f)(t,v)$ is
measurable on $[0,T_n]\times\bR$ and
\beqa\label{|fn|}
&&\int_{{\bR}}|
J_n(f)(t,v)|{\rm d}v
\le \|f_0\|_{L^1}
+\int_{0}^{t}{\rm d}\tau\int_{{\bR}}\big|Q_{n,K}(|f|)(\tau,v)\big|{\rm d}v
\no\\
&&\le
\|f_0\|_{L^1}
+8(1+2K)|{\bS}|n\|f_0\|_{L^1}T_n \|f_0\|_{L^1}
\le 2\|f_0\|_{L^1}\quad \forall\,t\in [0, T_n].\eeqa
Thus
$\sup\limits_{t\in [0, T_n]}\|J_n(f)(t)\|_{L^1}
\le 2\|f_0\|_{L^1}$ and so $J_n(f)\in {\cal A}_{T_n}$.
Next for any $f, g\in {\cal A}_{T_n}$ and any $t\in [0,T_n]$ we have (by using Proposition \ref{I(f,g)})
\beqa\label{J(f)-J(g)}
&&\|J_n(f)(t)-J_n(g)(t)\|_{L^1}
\le \int_{0}^{t}{\rm d}\tau\int_{{\bR}}\big|Q_{n,K}(|f|)(\tau,v)-Q_{n,K}(|g|)(\tau,v)
\big|{\rm d}v
\no\\
&&=2\int_{0}^{t}{\cal I}_{n,k}(|f(\tau)|, |g(\tau)|){\rm d}\tau
\le
2\int_{0}^{t}(1+2K)|{\bS}|n (\|f(\tau)\|_{L^1}+\|g(\tau)\|_{L^1})\|f(\tau)-g(\tau)\|_{L^1}
\no\\
&&\quad +2 n 2^{5/2}|{\bS}|\|g(\tau)\wedge n\|_{L^{\infty}}\|g(\tau)\|_{L^1}\|f(\tau)-g(\tau)\|_{L^1}
{\rm d}\tau
\no\\
&&\le
2\int_{0}^{t}(1+2K)|{\bS}|4n\|f_0\|_{L^1}\|f(\tau)-g(\tau)\|_{L^1}
+2 n 2^{5/2}|{\bS}|\|f_0\|_{L^1}\|f(\tau)-g(\tau)\|_{L^1}
{\rm d}\tau
\no\\
&&\le 8(1+2K+2^{3/2} )|{\bS}|n\|f_0\|_{L^1} T_n\sup_{t\in [0, T_n]}
\|f(t)-g(t)\|_{L^1}
\le \fr{1}{2}\sup_{t\in [0, T_n]}
\|f(t)-g(t)\|_{L^1}.
\eeqa
This proves (\ref{contractiven condition}). Thus  there exists a unique $f\in {\cal A}_{T_n}$ such that $J_n(f)=f.$

 {\bf Step 2.} Given any  $n\in \mathbb{N}$. Let $T=T_n>0$ be defined in {\bf Step 1} and let $f^n:=f$  obtained in {\bf Step 1} be the unique fixed point of $J_n:{\cal A}_{T_n}\to {\cal A}_{T_n}$. From Proposition \ref{proposition2} and (\ref{|fn|}) we have for any $(t,v)\in [0,T_n]\times \R^3$
 \beas
 &&\big|Q_{n,K}^{+}(|f^n|)(t,v)\big|
 \le \intt_{{\bRS}}
 B_n(v-v_*,\sg)(|{f^n}'|\wedge n)(|{f_*^n}'|\wedge n)
(1+ |f^n|\wedge K+ |{f_*^n}|\wedge K){\rm d}\sg{\rm d}v_*
\\
&&\le
  (1+2K)n\intt_{{\bRS}}
 (|{f^n}'|\wedge n)(|{f_*^n}'|\wedge n){\rm d}\sg{\rm d}v_*
  \le 2^{\fr{5}{2}}(1+2K)n^2|\bS|\|f(t)\|_{L^1}
  \le 2^{\fr{7}{2}}(1+2K)n^2|\bS|\|f_0\|_{L^1},\\
 &&|Q_{n,K}^{-}(|f^n|)(t,v)|
 \le
n^2(1+2K)\intt_{{\bRS}}|f_*^n|{\rm d}\sg{\rm d}v_*
\le
2(1+2K)n^2|\bS|\|f_0\|_{L^1}.
 \eeas
     Fix any $v\in \R^3$. The function $t\mapsto f^n(t,v)$  is  absolutely continues on $[0,T_n]$. Since $(\cdot)^+$ is Lipschitz continues, it follows from Lemma \ref{Lemma7} that $t\mapsto (-f^n(t,v))^+$ is absolutely continues about $t\in [0,T_n]$. Since $f^n(0,v)=f_0(v)\ge 0$, it follows that
    for all $(t,v)\in[0,T_n]\times \R^3$
 \beas&&
 (-f^n(t,v))^+=\int_0^t\Big(-\fr{\dd}{\dd \tau}{f^n}(\tau,v){\bf 1}_{\{{f^n}(\tau,v)\le 0\}}\Big)\dd\tau
 =\int_0^t\Big(-Q_{n,k}(|{f^n}|)(\tau, v){\bf 1}_{\{f(\tau, v)\le 0\}}\Big)
 {\rm d}\tau\\
 &&=\int_0^t\Big (Q_{n,k}^{-}(|{f^n}|)(\tau, v){\bf 1}_{\{f(\tau, v)\le 0\}}
 -Q_{n,k}^{+}(|{f^n}|)(\tau, v){\bf 1}_{\{f(\tau, v)\le 0\}}\Big){\rm d}\tau
\\
&&\le \int_0^tQ_{n,k}^{-}(|{f^n}|)(\tau, v){\bf 1}_{\{f(\tau, v)\le 0\}}
 {\rm d}\tau
\\
&&\le (1+2K)n|{\bS}|\int_0^t\Big(\int_{\bR}
|f^n(\tau, v_*)|{\rm d}v_*\Big) |f^n(\tau, v)|{\bf 1}_{\{{f^n}(\tau, v)\le 0\}}{\rm d}\tau
\\
&&\le 2(1+2K)n|{\bS}|\|f_0\|_{L^1}\int_0^t(-{f^n}(\tau,v))^+{\rm d}\tau.
 \eeas
By Gronwall's lemma we conclude that $(-f^n(t,v))^+= 0$ for all $(t,v)\in [0,T_n]\times \R^3.$
Thus we have proved $f^n(t,v)\ge 0$ on $[0,T_n]\times \R^3$
and so $f^n$ satisfies
 $$f^n(t,v)=f_0(v)+\int_{0}^{t}Q_{n,K}(f^n)(\tau,v){\rm d}\tau
 \qquad \forall\,(t,v)\in[0,T_n]\times {\mR}^3.$$
Next we prove that $f^n(t)$ conserves the mass on $[0,T_n]$.
 For any $\vp \in L^{\infty}(\R^3)$ and  any $0\le t\le T_n$, we have by using (\ref{identity}) that
\beas
&&\int_{\R^3}\vp(v)(Q^{+}_{n,K}(f^n)(t,v)+Q^{-}_{n,K}(f^n)(t,v))\dd v
\\
&&\le
(1+2K)n\|\vp\|_{L^{\infty}}\iiint_{\bRRS}f^n(t,v'){f^n}(t,v_*')
+f^n(t,v)f^n(t,v_*)
{\rm d}\sg{\rm d}v_*\dd v
\\
&&\le
2(1+2K)n\|\vp\|_{L^{\infty}}|\bS|(\sup_{0\le t\le T_n }\|f^n(t)\|_{L^1})^2
\le
2(1+2K)n\|\vp\|_{L^{\infty}}|\bS|(2\|f_0\|_{L^1})^2.
\eeas
This integrability allows us to compute (as usual)
\beas
&&\int_{{\bR}}\vp(v)
Q_{n,K}(f^n)(t,v){\rm d}v
\no \\
&&=\int_{{\bR}}\fr{\vp(v)+\vp(v_*)}{2}\intt_{{\bRS}}
B_n(v-v_*,\sg)\Big( ({f^n}'\wedge n)({f_{*}^n}'\wedge n)
(1+ {f^n}\wedge K+ {f_{*}^n}\wedge K)\no\\
&&\quad-({f^n}\wedge n)({f_{*}^n}\wedge n)
(1+ {f^n}'\wedge K+ {f_{*}^n}'\wedge K)
\Big){\rm d}\sg{\rm d}v_*\dd v\no\\
&&=\fr{1}{2}\inttt_{{\bRRS}}
B_n(v-v_*,\sg)
(f^n\wedge n)(f^n_{*}\wedge n)
(1+ {{f}^n}'\wedge K+ {f_{*}^n}'\wedge K)\no\\
&&\quad\times
\big(\vp(v')+\vp(v_*')-\vp(v)-\vp(v_*)\big){\rm d}\sg{\rm d}v_*{\rm d}v.
\eeas
According to Fubini's theorem it follows that
\beqa\label{test fuc}
&&
\int_{{\bR}}\vp(v)f^n(t,v){\rm d}v=\int_{{\bR}}\vp(v)f_0(v){\rm d}v
+\fr{1}{2}\int_{0}^{t}{\rm d}\tau\inttt_{{\bRRS}}
B_n(v-v_*,\sg)
({f}^n\wedge n)(f^n_{*}\wedge n)\no\\
&&\quad\times(1+ {f^n}'\wedge K+ {f_{*}^n}'\wedge K)
\big(\vp(v')+\vp(v_*')-\vp(v)-\vp(v_*)\big){\rm d}\sg{\rm d}v_*{\rm d}v,\quad t\in [0,T_n].
\eeqa
Taking $\vp(v)\equiv 1$ gives the conservation of mass of $f^n$:
$$\int_{{\bR}}f^n(t,v){\rm d}v=\int_{{\bR}}f_0(v){\rm d}v
=\|f_0\|_{L^1}<\infty, \quad \forall\, t\in[0,T_n].$$
Since $f^n$ is nonnegative and conserves the mass, it follows from the choice of $T_n$
that with the same number $T_n>0$, the function $f^n$ can be extended from
the time interval $[0, T_n]$ to $[T_n, 2T_n],[2T_n, 3T_n], ...$\,.
Therefore we obtain a function $0\le f^n\in L^{\infty}([0,\infty); L^1({\bR}))$
which conserves the mass and satisfies
 \beas
 f^n(t,v)=f_0(v)+\int_{0}^{t}Q_{n,K}(f^n)(\tau,v){\rm d}\tau\qquad \forall\,(t,v)\in [0,\infty)\times{\mR}^3.
 \eeas
To prove that $f^n$ also conserves  momentum and energy we consider truncation
$\vp(v)=\la v\ra^{s}\wedge R$ with $s\ge 1$ and $0<R<\infty$.
From Lemma \ref{LemmaF} we have
\beqa\label{vp estimate}
&&\vp(v')+\vp(v_*')-\vp(v)-\vp(v_*)
\le
\vp(v')+\vp(v_*')
\le 2^{s}(\la v\ra^{s}\wedge R)+2^{s}(\la v_*\ra^{s}\wedge R).
\eeqa
Suppose $f_0\in L_s^1(\R^3)$. Then, since $\vp\in L^{\infty}(\R^3)$ and $f^n$ conserves the mass, it follows from
(\ref{test fuc}) and (\ref{vp estimate}) that
\beas&&
\int_{{\bR}}(\la v\ra^{s}\wedge R)f^n(t,v){\rm d}v
=\int_{{\bR}}(\la v\ra^{s}\wedge R)f_0(v){\rm d}v
+\fr{1}{2}\int_{0}^{t}{\rm d}\tau\inttt_{{\bRRS}}
B_n(v-v_*,\sg)
(f^n\wedge n)(f^n_{*}\wedge n)\\
&&\quad\times(1+ {f^n}'\wedge K+ {f_{*}^n}'\wedge K)
\big(\vp(v')+\vp(v_*')-\vp(v)-\vp(v_*)\big){\rm d}\sg{\rm d}v_*{\rm d}v\\
&&\le
\|f_0\|_{L^1_s}
+2^{s} (1+2K)n|{\bS}|\int_{0}^{t}{\rm d}\tau\intt_{{\bRR}}
(\la v\ra^{s}\wedge R)(f^n\wedge n)(f^n_{*}\wedge n)
{\rm d}v_*{\rm d}v
\\
&&\le
\|f_0\|_{L^1_s}
+2^{s} (1+2K)n|{\bS}|\|f_0\|_{L^1}\int_{0}^{t}{\rm d}\tau\int_{{\bR}}
(\la v\ra^{s}\wedge R)f^n(\tau,v){\rm d}v,\quad t\ge 0.
\eeas
By Gronwall's lemma this gives
$$\int_{{\bR}}(\la v\ra^{s}\wedge R)f^n(t,v){\rm d}v
\le  \|f_0\|_{L^1_s}\exp\big(2^{s} (1+2K)n|{\bS}|\|f_0\|_{L^1}
t\big),\quad t\ge 0.$$
Letting $R\to\infty$
we conclude from Fatou's lemma that $f^n(t,\cdot)\in L^1_s({\mR})$ and
$$\|f^n(t)\|_{L^1_s}=\int_{{\bR}}\la v\ra^{s}f^n(t,v){\rm d}v
\le  \|f_0\|_{L^1_s}\exp\big(2^{s} (1+2K)n|{\bS}|\|f_0\|_{L^1}
t\big),\quad t\ge 0.$$
Next for any $s\ge 1$  and any $0<T<\infty$ we have by using (\ref{identity}) that
\beas
&&\sup_{t\in [0, T]}\int_{\R^3}\la v\ra^s \Big(Q^{+}_{n,K}(f^n)(t,v)+Q^{+}_{n,K}(f^n)(t,v)\Big)\dd v
\\
&&\le \sup_{t\in [0, T]}
2n(1+2K)\iiint_{\bRRS}2^s \la v\ra^s \la v_*\ra^sf^n(t,v){f^n}(t,v_*)
{\rm d}\sg{\rm d}v_*\dd v
\\
&&\le
2n(1+2K)2^s|\bS|\big[\|f_0\|_{L^1_s}\exp\big(2^{s} (1+2K)n|{\bS}|\|f_0\|_{L^1}
T\big)\big]^2< \infty.
\eeas
This means that the test function can be chosen as $\vp(v)=\la v \ra^s$. Thus (again recall definition
of $\|\cdot\|_{L^1_s}$)
\beqa\label{derivative}
&&\|f^n(t)\|_{L^1_s}=\|f^n_0\|_{L^1_s}
+\fr{1}{2}\int_{0}^{t}{\rm d}\tau\inttt_{{\bRRS}}
B_n(v-v_*,\sg)
(f^n\wedge n)(f^n_{*}\wedge n)\no\\
&&\quad\times(1+ {f^n}'\wedge K+ {f_{*}^n}'\wedge K)
\big(\la v'\ra^s+\la v_*'\ra^s-\la v\ra^s-\la v_*\ra^s\big){\rm d}\sg{\rm d}v_*{\rm d}v, \,\, t\in [0,\infty).
\eeqa
Take $s=2$. Since $f_0 \in L_2^1(\R^3), \la v\ra^2=1+|v|^2$, and
$\la v'\ra^2+\la v_*'\ra^2-\la v\ra^2-\la v_*\ra^2=0$, and since $f^n$ conserves the mass, it follows that
$f^n$ conserves the energy:
\beas
\int_{\R^3}|v|^2 f^n(t,v)\dd v=\int_{\R^3}|v|^2 f_0(v)\dd v,\quad t\in [0,\infty).
\eeas
Let $\vp(v)=\la {\bf e}_i, v\ra$, where ${\bf e}_1=(1,0,0), {\bf e}_2=(0,1,0), {\bf e}_3=(0,0,1)$.
Then a similar proof shows also that $f^n$ conserves the momentum.
Summarizing the above results we have proved that $\{f^{n}\}_{n=1}^{\infty}$ are conservative  mild solutions of Eq.(\ref{approximate equation 1}).
 \eepf
\vskip3mm

\subsection{Moment estimates}

In order to prove the $L^1$ relative compactness of solutions $\{f^n\}_{n=1}^{\infty}$ of Eq.(\ref{approximate equation 1}),
we need to establish uniform moment estimates of $\{f^n\}_{n=1}^{\infty}$.

\begin{proposition}\label{moment propogation}
For any $n\in \mathbb{N}$, let
$B_n=B\wedge n$ with $B$ the collision kernel satisfying $(\ref{condition of kernel})$ and let
$Q_{n,K}(\cdot)$ be
the collision operator defined in $(\ref{Qnk})$.
For any $0\le f_0\in L^1_s(\R^3)$ with $3\le s<\infty$, let $f^n$
be a conservative mild solution of Eq.$(\ref{approximate equation 1})$ corresponding to the kernel $B_n$ with the initial datum $f_0$.
Then
\beqa\label{moment estimate}
\sup_{n\ge 1}\|f^n(t)\|_{L^1_s}
\le
 \Big(\|f_0\|_{L^1_s}^{\fr{1}{s-2}}
+(1+2K)2^{\fr{s}{2}+2}|\bS|
b(\|f_0\|_{L^1_2})^{\fr{s-1}{s-2}}\fr{t}{s-2}\Big)
^{s-2},\quad t\in [0,\infty).
\eeqa
\end{proposition}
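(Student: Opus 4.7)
The strategy is to extract from the weak formulation of Eq.~(\ref{approximate equation 1}) a differential inequality of the form $u'(t)\le A\,u(t)^{(s-3)/(s-2)}$ for $u(t):=\|f^n(t)\|_{L^1_s}$, with $A$ depending only on the conserved $\|f_0\|_{L^1_2}$ and absolute constants, and then to integrate it explicitly. Every estimate will be $n$-uniform because I will systematically replace $B_n\le b|v-v_*|$, $(\cdot)\wedge n\le(\cdot)$, and $1+(\cdot)\wedge K+(\cdot)\wedge K\le 1+2K$. First I would extend the weak identity (\ref{derivative}) from the test function $\vp(v)=\la v\ra^s\wedge R$ to $\vp(v)=\la v\ra^s$ by monotone convergence in $R\to\infty$; this is legitimate because the proof of Proposition~\ref{fn} already produces the (still $n$-dependent, Gronwall-type) bound $\|f^n(t)\|_{L^1_s}<\infty$. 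The resulting integral representation is
\beas
&&\|f^n(t)\|_{L^1_s}=\|f_0\|_{L^1_s}+\fr{1}{2}\int_0^t\dd\tau\inttt_{\bRRS}B_n(v-v_*,\sg)(f^n\wedge n)(f^n_*\wedge n)\\
&&\qquad\qquad\times\big(1+{f^n}'\wedge K+{f^n_*}'\wedge K\big)\big[\la v'\ra^s+\la v_*'\ra^s-\la v\ra^s-\la v_*\ra^s\big]\dd\sg\dd v_*\dd v.
\eeas

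Next I would estimate the Povzner bracket by Corollary~\ref{LemmaE} with $\gm=\min\{2,s/2\}$, discarding the nonpositive angular term $-2^{-s}(s/2-1)[\kappa(\theta)]^{s/2}\la v\ra^s$, and combine this with the pointwise upper bounds $B_n\le b|v-v_*|\le b(\la v\ra+\la v_*\ra)$, $(f^n\wedge n)(f^n_*\wedge n)\le f^nf^n_*$, and $1+{f^n}'\wedge K+{f^n_*}'\wedge K\le 1+2K$. The $\sg$-integral contributes $|\bS|$, and $(v,v_*)$-symmetrization reduces the right-hand side to a linear combination of products $\|f^n\|_{L^1_p}\|f^n\|_{L^1_q}$ with $p+q=s+1$; in the principal case $s\ge 4$ (where $\gm=2$) these are exactly $\|f^n\|_{L^1_{s-1}}\|f^n\|_{L^1_2}$ and $\|f^n\|_{L^1_{s-2}}\|f^n\|_{L^1_3}$. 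Conservation of energy in Proposition~\ref{fn} fixes $\|f^n(t)\|_{L^1_2}=\|f_0\|_{L^1_2}$, and the standard H\"older interpolation $\|g\|_{L^1_r}\le\|g\|_{L^1_2}^{(s-r)/(s-2)}\|g\|_{L^1_s}^{(r-2)/(s-2)}$ for $r\in[2,s]$ forces both products to be controlled by the same quantity $\|f_0\|_{L^1_2}^{(s-1)/(s-2)}\|f^n(t)\|_{L^1_s}^{(s-3)/(s-2)}$. Collecting constants (the $\tfrac12$ from the weak formula, $2(2^{s/2}-2)\le 2^{s/2+1}$ from Povzner, a factor $2$ from symmetrization, a further factor $2$ from summing the two equal products, and $|\bS|,b,1+2K$) one arrives at
\beas
\fr{\dd}{\dd t}\|f^n(t)\|_{L^1_s}\le(1+2K)\,2^{s/2+2}\,|\bS|\,b\,\|f_0\|_{L^1_2}^{(s-1)/(s-2)}\,\|f^n(t)\|_{L^1_s}^{(s-3)/(s-2)}.
\eeas

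Finally, writing $\al=(s-3)/(s-2)$ and $A$ for the constant above, $u'\le Au^\al$ is separable: dividing by $u^\al$ and integrating on $[0,t]$ yields $u(t)^{1-\al}\le u(0)^{1-\al}+A(1-\al)t$, and the identities $1-\al=1/(s-2)$ and $1/(1-\al)=s-2$ turn this into exactly the claimed bound (\ref{moment estimate}). The main technical obstacle is the interpolation bookkeeping, where one must verify by direct arithmetic that the two \emph{different} products $\|f^n\|_{L^1_{s-1}}\|f^n\|_{L^1_2}$ and $\|f^n\|_{L^1_{s-2}}\|f^n\|_{L^1_3}$ give the same combined exponents $(s-1)/(s-2)$ in $\|f_0\|_{L^1_2}$ and $(s-3)/(s-2)$ in $\|f^n\|_{L^1_s}$; for the boundary range $3\le s<4$, in which Corollary~\ref{LemmaE} only permits $\gm=s/2<2$, the same argument is repeated using the single product $\|f^n\|_{L^1_{s/2+1}}\|f^n\|_{L^1_{s/2}}$ together with the trivial bound $\|f^n\|_{L^1_{s/2}}\le\|f^n\|_{L^1_2}$.
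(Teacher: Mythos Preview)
Your plan coincides with the paper's proof for $s\ge 4$: both start from the weak identity (\ref{derivative}), bound the Povzner bracket via Corollary~\ref{LemmaE} with $\gm=2$, replace $B_n\le b|v-v_*|\le b(\la v\ra+\la v_*\ra)$ and $1+(\cdot)\wedge K+(\cdot)\wedge K\le 1+2K$, and reduce to the two products $\|f^n\|_{L^1_2}\|f^n\|_{L^1_{s-1}}$ and $\|f^n\|_{L^1_3}\|f^n\|_{L^1_{s-2}}$, which interpolate (between $L^1_2$ and $L^1_s$) to the same expression $\|f_0\|_{L^1_2}^{(s-1)/(s-2)}\|f^n\|_{L^1_s}^{(s-3)/(s-2)}$ and yield the separable ODE you integrate. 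The bookkeeping of constants you describe is exactly the paper's.

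There is, however, a genuine gap in your handling of the range $3\le s<4$. You are right that Corollary~\ref{LemmaE} only allows $\gm\le\min\{2,s/2\}$, so $\gm=2$ is unavailable there (the paper's own proof writes $\gm=2$ uniformly without addressing this constraint). But your proposed remedy does not reproduce the claimed differential inequality. Taking $\gm=s/2$ gives, after the $(\la v\ra+\la v_*\ra)$ factor, the product $\|f^n\|_{L^1_{s/2+1}}\|f^n\|_{L^1_{s/2}}$; bounding $\|f^n\|_{L^1_{s/2}}\le\|f_0\|_{L^1_2}$ and interpolating $\|f^n\|_{L^1_{s/2+1}}$ between $L^1_2$ and $L^1_s$ yields
\[
\|f^n\|_{L^1_{s/2+1}}\le \|f_0\|_{L^1_2}^{1/2}\,\|f^n\|_{L^1_s}^{1/2},
\]
since $\big(\tfrac{s}{2}+1-2\big)/(s-2)=\tfrac12$. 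Hence you only get $u'\le C\,u^{1/2}$, which integrates to a bound growing like $t^2$, strictly weaker than the $t^{\,s-2}$ growth asserted in (\ref{moment estimate}) when $s<4$ (and in particular not the linear-in-$t$ bound for $s=3$, which is exactly the case used downstream in (\ref{L31 condition}), (\ref{small 1}) and Proposition~\ref{L3}). So ``the same argument is repeated'' is not quite true: the exponents do not match.
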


\bepf From (\ref{derivative}) and Corollary \ref{LemmaE}, we have for a.e. $t\in [0,T]$
\beas
&&
\fr{{\rm d}}{{\rm d}t}
\|f^n(t)\|_{L^1_s}
=\fr{1}{2}\inttt_{{\bRRS}}
B_n(v-v_*,\sg)
(f^n\wedge n)(f^n_*\wedge n)
(1+ {f^n}'\wedge K+ {f^n}_*'\wedge K)\\
&&\quad\times
(\la v'\ra^s+\la v_*'\ra^s-\la v\ra^s-\la v_*\ra^s){\rm d}\sg{\rm d}v_*{\rm d}v
\\
&&\le
\inttt_{{\bRRS}}
B_n(v-v_*,\sg)
(f^n\wedge n)(f^n_*\wedge n)
(1+ {f^n}'\wedge K+ {f^n}_*'\wedge K)\\
&&\quad\times 2^{s/2}(\la v\ra^{s-2}\la v_*\ra^2+
\la v\ra^2\la v_*\ra ^{s-2})
{\rm d}\sg{\rm d}v_*{\rm d}v
\\
&&
\le (1+2K)2^{\fr{s}{2}}|\bS|b
 \intt_{{\bRR}}
(\la v\ra+\la v_*\ra)(\la v\ra^{s-2}\la v_*\ra^2+
\la v\ra^2\la v_*\ra ^{s-2})f^nf^n_*
{\rm d}v_*{\rm d}v
\\
&&=
(1+2K)2^{\fr{s}{2}+1}|\bS|b
 \big(\|f_0\|_{L^1_2}\|f^n(t)\|_{L^1_{s-1}}
+\|f^n(t)\|_{L^1_{3}}
\|f^n(t)\|_{L^1_{s-2}}\big).
\eeas
Recall that for any $2<p\le s$, by writing
$p=2\fr{s-p}{s-2}+s\fr{p-2}{s-2}$ and using H\"{o}lder's inequality we have
\beas&& \|f\|_{L^1_p}
\le
(\|f\|_{L^1_2})^{\fr{s-p}{s-2}}(\|f\|_{L^1_s})^{\fr{p-2}{s-2}},\qquad  f\in L^1_s({\bR}).
\eeas
It follows that
\beas&& \fr{{\rm d}}{{\rm d}t}
\|f^n(t)\|_{L^1_s}
\le
(1+2K)2^{\fr{s}{2}+2}|\bS|b
 \big(\|f_0\|_{L^1_2}\big)^{\fr{s-1}{s-2}}\big(\|f^n(t)\|_{L^1_s}\big)^{\fr{s-3}{s-2}}
 :=A \|f^n(t)\|_{L^1_s}^{\fr{s-3}{s-2}} \quad {\rm for\,\,\, a.e.}\,\, t\in [0,T]
.\eeas
Thus denoting $\alpha=\fr{1}{s-2}$ we have with $u(t):=\|f^n(t)\|_{L^1_s}$ that
$$([u(t)]^{\alpha})'
=\alpha [u(t)]^{\alpha-1}u'(t)
\le \alpha [u(t)]^{\alpha-1}A [u(t)]^{1-\alpha}
=A\alpha\qquad  {\rm a.e.}\quad t\in [0, T].$$
Here we note that by conservation of mass we have $u(t)=\|f^n(t)\|_{L^1_s}\ge \|f_0\|_{L^1}>0$
and from (\ref{derivative}) we know that $u(t)$ is absolutely continuous on
any bounded interval.  It follows that $t\mapsto [u(t)]^{\alpha}$ is also absolutely continuous on
any bounded interval. Thus
$$[u(t)]^{\alpha}\le [u(0)]^{\alpha}+A\alpha t,\quad
u(t)\le \big([u(0)]^{\alpha}+A\alpha t\big)^{\fr{1}{\alpha}}\qquad \forall\, t\in [0, T]$$
and we conclude
$$\sup_{n\ge 1}\|f^n(t)\|_{L^1_s}
\le
 \Big(\|f_0\|_{L^1_s}^{\fr{1}{s-2}}
+(1+2K)2^{\fr{s}{2}+2}|\bS|
b(\|f_0\|_{L^1_2})^{\fr{s-1}{s-2}}\fr{t}{s-2}\Big)
^{s-2},\quad t\in [0, T].$$
\eepf
\vskip3mm

 \subsection{$Q^{+}$ iteration and $L^1$ compactness}

Our first use of multi-step iterations of $Q^{+}$ is in the proof of the following proposition
which gives $L_2^1$-stability estimates for mild solutions of Eq.$(\ref{approximate equation 1})$, Eq.$(\ref{approximate equation 2})$,  and Eq.(\ref{Equation}). One will see that
it is this  multi-step iterations of $Q^{+}$ that enable us to obtain useful $L_2^{\infty}$ estimates of
solutions.

 \begin{proposition}\label{stable}  Let $B$ the collision kernel satisfying $(\ref{condition of kernel})$.
For any fixed $n\in \mathbb{N}$, let
$B_n=B\wedge n$ and let
$Q(\cdot)$, $Q_{K}(\cdot)$, $Q_{n,K}(\cdot)$ be the collision operators defined in $(\ref{Q(f)})$, $(\ref{QK})$, $(\ref{Qnk})$ respectively.
With $0\le f_0,g_0\in L^1_3(\R^3)\cap L_3^{\infty}(\R^3)$ being initial data, let
 $f, g$ be the conservative mild solutions of Eq.$(\ref{approximate equation 2})$ and
$f=f^n, g=g^n$ the conservative mild solutions of Eq.$(\ref{approximate equation 1})$, respectively.
Also assume in both cases that $f,g$ satisfy the estimate $(\ref{moment estimate})$  with $s=3$, i.e.
\beqa\label{L31 condition}
&&\|f(t)\|_{L_3^1}\le
\|f_0\|_{L^1_3}
+(1+2K)2^{\fr{7}{2}}b |\bS|\|f_0\|_{L^1_2}^{2}t,\quad t\in [0,\infty)\no \\
&&\|g(t)\|_{L_3^1}\le
\|g_0\|_{L^1_3}
+(1+2K)2^{\fr{7}{2}}b |\bS|\|g_0\|_{L^1_2}^{2}t,\quad t\in [0,\infty).\eeqa
Then
\beqa\label{f-g}
\sup_{ t\in [0, T]}\|f(t)-g(t)\|_{L^1_2}\le C_{T,K}\|f_0-g_0\|_{L^1_2}\quad \forall\, 0<T<\infty\eeqa
where $C_{T,K}=C_{T,K}(\|f_0\|_{L^{\infty}_3},\|f_0\|_{L^1_3},
 \|g_0\|_{L^{\infty}_3},\|g_0\|_{L^1_3})<\infty$ is independent of $n$,  $[0,\infty)^4\ni (y_1,y_2,y_3, y_4)\mapsto C_{T,K}(y_1,y_2,y_3,y_4)$ is a continuous function and is
 monotone non-decreasing with respect to each $y_1,y_2,y_3,$ $ y_4\in [0,\infty)$.

 Furthermore, for any $0\le f_0,g_0 \in L^1_3(\R^3)\cap L_3^{\infty}(\R^3)$, let $0\le f, g \in L^{\infty}_{{\rm loc}}([0,\infty); L^1_3({\bR})\cap L^{\infty}({\bR}))$
be  mild solutions of Eq.$(\ref{Equation})$ corresponding to the kernel $B$ with the initial data $f_0,g_0$ respectively.
Then
\beqa\label{Theorem proposition}
\sup_{t\in [0, T]}\|f(t)-g(t)\|_{L^1_2}\le C_{T}\|f_0-g_0\|_{L^1_2} \quad \forall\, 0<T<\infty
\eeqa
where $C_{T}<\infty$ depends only on $T$ and $\|f_0\|_{L^{\infty}_3},\sup\limits_{t\in [0, T]}\|f(t)\|_{L^1_3},\sup\limits_{t\in[0,T]}\|f(t)\|_{L^{\infty}},
 \|g_0\|_{L^{\infty}_3},\sup\limits_{t\in [0, T]}\|g(t)\|_{L^1_3},$ $ \sup\limits_{t\in[0,T] }\|g(t)\|_{L^{\infty}}.$
\end{proposition}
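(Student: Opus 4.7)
The plan is a Gronwall-type argument on $u(t):=\|f(t)-g(t)\|_{L^1_2}$. With $h:=f-g$, I would start from the pointwise ODE
$$\fr{d}{dt}h(t,v)=Q_{*}(f)(t,v)-Q_{*}(g)(t,v)$$
where $Q_{*}\in\{Q_{n,K},Q_{K},Q\}$, and expand the right-hand side bilinearly in $h$. The quadratic part produces $Q^{+}(h,f)+Q^{+}(g,h)$ together with loss-type contributions $|h|L_{K}(f)+g(L_{K}(f)-L_{K}(g))$; the cubic correction arising from $f\wedge K-g\wedge K$ is dominated pointwise by $|h|$ and by $K$ via Lemma \ref{LemmaF}, while in the original-equation case the cubic piece is instead absorbed by the assumed $L^{\infty}_{3}$-bounds on $f,g$.

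Multiplying by $\la v\ra^{2}\mathrm{sgn}(h)$ and integrating in $v$, most terms are handled by Proposition \ref{proposition2} and yield contributions of the form $C_{1}(t)\|h(t)\|_{L^1_2}$, which close Gronwall directly. The troublesome contribution is
$$\int_{\R^{3}}\la v\ra^{2}\,Q^{+}(|h|,f+g)(v)\,dv,$$
for which Proposition \ref{proposition2} gives an upper bound of the form $\|h\|_{L^1_3}(\|f\|_{L^1_1}+\|g\|_{L^1_1})+\|h\|_{L^1_1}(\|f\|_{L^1_3}+\|g\|_{L^1_3})$. The second summand is fine, but the first involves $\|h\|_{L^1_3}$, which cannot be controlled by $\|h\|_{L^1_2}$ alone. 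This is the main obstacle.

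To overcome it, I plan to substitute the mild form
$$|h(\tau,v)|\le|h_{0}(v)|+\int_{0}^{\tau}\bigl|Q_{*}(f)-Q_{*}(g)\bigr|(s,v)\,ds$$
inside the problematic term, so that the inner $|h|$ factor unfolds into an initial-data piece plus a time-integrated piece built from bilinear expressions in $f,g$. After one such substitution, the terms that involve $|h_{0}|$ appear as iterated gain operators $Q^{+}\bigl(|h_{0}|,Q^{+}(f+g,f+g)\bigr)$ and $Q^{+}\bigl(Q^{+}(f+g,f+g),|h_{0}|\bigr)$, while the remaining time-integrated terms are of the same bilinear shape with $|h_{0}|$ replaced by $|h(s,\cdot)|$. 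Proposition \ref{most important} then furnishes $L^{p}$- or pointwise bounds on these iterated operators purely in terms of $L^{1}$- and $L^{2}$-norms of the inner factors, with no $L^{\infty}$-norm required on $f$ or $g$. The hypothesis $f_{0},g_{0}\in L^{1}_{3}\cap L^{\infty}_{3}$ supplies $f_{0},g_{0}\in L^{2}$ via $\|f_{0}\|_{L^{2}}^{2}\le\|f_{0}\|_{L^{\infty}_{3}}\|f_{0}\|_{L^{1}}$, and the moment bounds (\ref{L31 condition}) (or, in the original-equation case, the assumed $L^{1}_{3}$ and $L^{\infty}$ bounds on $f,g$) keep all arising quantities finite and $n$-independent up to time $T$.

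Combining the estimates, $u(t)$ satisfies
$$u(t)\le\|h_{0}\|_{L^1_2}+C_{T,K}\int_{0}^{t}u(\tau)\,d\tau,\quad t\in[0,T],$$
with $C_{T,K}$ a polynomial in the initial-data norms and the moment bounds; Gronwall then yields (\ref{f-g}). The continuity and monotone non-decreasing dependence of $C_{T,K}$ on $(\|f_{0}\|_{L^{\infty}_{3}},\|f_{0}\|_{L^{1}_{3}},\|g_{0}\|_{L^{\infty}_{3}},\|g_{0}\|_{L^{1}_{3}})$ follow from the explicit polynomial structure of the bounds. The original-equation assertion (\ref{Theorem proposition}) is the same argument with the $L^{\infty}$-bounds on $f,g$ playing the role previously played by the cutoff $K$ in suppressing the cubic terms.
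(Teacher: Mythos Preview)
Your identification of the obstruction and the intent to use iteration together with Proposition~\ref{most important} are right, but the iteration is applied in the wrong place. Substituting the mild bound for $|h(\tau,\cdot)|$ into $\int\la v\ra^{2}Q^{+}(|h(\tau)|,f+g)\,dv$ yields $\int\la v\ra^{2}Q^{+}(|h_{0}|,(f+g)(\tau))\,dv$ plus a time-integrated remainder in which $|h(s)|$ reappears in exactly the same position; it does \emph{not} produce the doubly-iterated object $Q^{+}(|h_{0}|,Q^{+}(f+g,f+g))$ you describe, because the second argument $f+g$ is left untouched by your substitution. The $|h_{0}|$-piece therefore still carries the weight $\|h_{0}\|_{L^{1}_{3}}$, so even granting the rest you would obtain at best $\|h(t)\|_{L^{1}_{2}}\le C\|h_{0}\|_{L^{1}_{3}}$ rather than the $L^{1}_{2}$--$L^{1}_{2}$ estimate of the proposition, and the remainder has the same moment defect in $|h(s)|$. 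In addition, the cubic contribution you dismiss via Lemma~\ref{LemmaF} also produces, after the exchange-of-primes identity, the term $\int|h|(v)\bigl[\iint\la v'\ra^{2}|v-v_{*}|(f'f_{*}'+g'g_{*}')\,d\sigma\,dv_{*}\bigr]dv$, which cannot be bounded by $C\|h\|_{L^{1}_{2}}$ using only $L^{1}$-moments of $f,g$.

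The paper's resolution is to iterate on the solutions $f$ and $g$ themselves rather than on $h$. From the gain-dominated inequality $f(t,v)\le f_{0}(v)+(1+2K)\int_{0}^{t}Q^{+}(f_{\tau},f_{\tau})(v)\,d\tau$ (which, since $\la v\ra\le\la v'\ra\la v_{*}'\ra$, is also satisfied by $\la v\ra^{2}f$), one substitutes this bound back into the arguments of $Q^{+}(f_{\tau},f_{\tau})$ repeatedly until the deepest surviving terms have the shape $Q^{+}\bigl(Q^{+}(f_{\tau_{2}},f_{\tau_{2}}),\,Q^{+}(f_{\tau_{3}},Q^{+}(f_{\tau_{4}},f_{\tau_{4}}))\bigr)$; each such term is bounded uniformly by Proposition~\ref{most important} in terms of $L^{1}$-moments of $f$ only, while the shallower terms use $\|f_{0}\|_{L^{\infty}_{3}}$. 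This delivers an a~priori bound on $\sup_{t\le T}\|f(t)\|_{L^{\infty}_{2}}$ (and likewise for $g$) depending only on $\|f_{0}\|_{L^{\infty}_{3}},\|f_{0}\|_{L^{1}_{3}},\|g_{0}\|_{L^{\infty}_{3}},\|g_{0}\|_{L^{1}_{3}},T,K$. With these $L^{\infty}_{2}$ bounds available, Proposition~\ref{proposition2} (take $p=2$, $q=0$, $\gamma=1$) controls the bracketed inner integral pointwise by $C\la v\ra$, and the Gronwall argument on $\|h(t)\|_{L^{1}_{2}}$ closes with the stated monotone dependence of $C_{T,K}$.
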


\bepf The proof is divided into two steps.

{\bf Step 1.}
Let $Q^{+}(\cdot,\cdot)$ be defined in (\ref{Q+ bilinear}) and fix any $0<T<\infty$, and let $Z\subset \R^3$ is a null set appeared in Definition \ref{mild solutions}.  Notice first that under the assumption in the proposition we have
\beqa\label{iteration1} f(t,v)\le f_0(v)+(1+2K)\int_{0}^{t}Q^{+}(f_{\tau_1}, f_{\tau_1})(v){\rm d}\tau_1,\quad (t,v)\in [0,T]\times(\R^3\setminus Z) \eeqa
where to shorten notation we denote the solution $f_t(v)\equiv f(t,v)$.
Using the inequality (\ref{iteration1}) to $f(\tau_1,v)$ and substituting the right hand side of the inequality
into the two arguments of $Q^{+}(f_{\tau_1}, f_{\tau_1})$, and then making further iteration,  we compute for any $(t,v)\in [0,T]\times (\R^3\setminus Z)$ that
\beqa\label{fn infinity}
&&f(t,v)\le f_0(v)\quad\no \\
&&\quad +(1+2K)\int_{0}^{T}Q^{+}\Big(f_0+
(1+2K)\int_{0}^{T}Q^{+}(f_{\tau_{2}},f_{\tau_{2}})\dd {\tau_{2}},\,
f_0+
(1+2K)\int_{0}^{T}Q^{+}(f_{\tau_{3}},f_{\tau_{3}})\dd {\tau_{3}}
\Big)(v){\rm d}\tau_1
\no\\
&&\le
f_0(v)+(1+2K)\int_{0}^{T}Q^{+}(f_0,f_0)(v){\rm d}\tau_1+2(1+2K)^2\int_{0}^{T}
\int_{0}^{T}Q^{+}(Q^{+}(f_{\tau_2}, f_{\tau_2}), f_0)(v){\rm d}\tau_2{\rm d}\tau_1
\no\\
&&\quad +(1+2K)^3\int_{0}^{T}\int_{0}^{T}\int_{0}^{T}Q^{+}\big(
Q^{+}(f_{\tau_2},f_{\tau_2}), Q^{+}(f_0, f_0)\big)(v)\dd {\tau_3} \dd {\tau_2}{\rm d}\tau_1
\no\\
&&\quad +(1+2K)^4\int_{0}^{T}\int_{0}^{T}\int_{0}^{T}\int_{0}^{T}Q^{+}\big( Q^{+}(f_{\tau_2},f_{\tau_2}),
Q^{+}(f_0, Q^{+}(f_{\tau_4},f_{\tau_4}))
\big)(v)\dd {\tau_4}\dd {\tau_3} \dd {\tau_2}{\rm d}\tau_1
\no\\
&&\quad +(1+2K)^4\int_{0}^{T}\int_{0}^{T}\int_{0}^{T}\int_{0}^{T}Q^{+}\big(Q^{+}(f_{\tau_2},f_{\tau_2}), Q^{+}(f_{\tau_3}, Q^{+}(f_{\tau_4},f_{\tau_4}))
\big)(v)\dd {\tau_4}\dd {\tau_3} \dd {\tau_2}{\rm d}\tau_1.
\no\\
\eeqa
Next, using Proposition \ref{proposition2} and $|v-v_*|=|v'-v_*'|\le \la v'\ra \la v_*'\ra$ we have
\beqa\label{Q+f0f0 infty}
Q^{+}(f_0,f_0)(v)
\le b\intt_{{\bRS}}\la v'\ra \la v_*'\ra f_0(v') f_0(v_*'){\rm d}\sg{\rm d}v_*
\le
2^{\fr{5}{2}}b|\bS|\|f_0\|_{L^{\infty}_1}\|f_0\|_{L^{1}_{1}},\quad v\in \R^3.
\eeqa
In order to use Proposition \ref{most important} to obtain uniform estimates of  $\|f(t)\|_{L_2^{\infty}}$ and $\|g(t)\|_{L_2^{\infty}}$ let us define
$$(f)_s(v):=\la v\ra^s f(v)\quad {\rm with}\quad s\ge 0.$$
It is easily seen that for any
  $0\le f,g,h\in L^1_s({\mR}^3)$ and any $v\in \R^3$ we have
  \beas
  \big(Q^+(f,g)\big)_s(v)&\le& Q^+\big((f)_s,(g)_s\big)(v),\\
  \big(Q^+(f,Q^+(g,h))\big)_s(v)&\le& Q^+\big((f)_s,Q^+((g)_s,(h)_s)\big)(v)
  \eeas
where we used the fact that $\la v\ra\le \la v'\ra\la v_*'\ra$.
From these we see that the inequality (\ref{iteration1}) hence
all inequalities in (\ref{Q+f0f0 infty}) hold also for the function
$(f)_s(t,v)=\la v\ra ^sf(t,v)$.  In other words, this means that if $f$ is a solution
of the inequality (\ref{iteration1}), so is $(f)_s$ for any $s\ge 0$.
Take $s=2$.  Then combining this with
(\ref{exchange}), (\ref{Q+(f0,ft)L1})$-$(\ref{Q+(Q+(ft,Q+(fr,fr)),Q+(fs,fs))}), (\ref{moment estimate}), (\ref{L31 condition}), (\ref{fn infinity}) and (\ref{Q+f0f0 infty}) we obtain a uniform estimate:
\beqa\label{fn T Linfty}
&& \sup_{0\le t\le T}\big(\|f(t)\|_{L^{\infty}_2}+\|f(t)\|_{L^1_3}+
 \|g(t)\|_{L^{\infty}_2}+\|g(t)\|_{L^1_3}\big) \no  \\
&&\qquad  \le \wt{C}_{T,K}
 \big(\|f_0\|_{L^{\infty}_3},\|f_0\|_{L^1_3},
 \|g_0\|_{L^{\infty}_3},\|g_0\|_{L^1_3}\big)=: \wt{C}_{T,K}<\infty\eeqa
where and below $\wt{C}_{T,K}(y_1,y_2,y_3,y_4)$ denotes a continuous function
on $[0,\infty)^4$ which is
 monotone non-decreasing with respect to each variable $y_1,y_2,y_3,y_4\in [0,\infty)$.

In order for our proof of stability estimates to cover both Eq.$(\ref{approximate equation 2})$ and Eq.$(\ref{approximate equation 1})$, we denote $Q_{*}=Q_K, Q^{\pm}_{*}=Q^{\pm}_K$ and $Q_{*}=Q_{n,K}, Q^{\pm}_{*}=Q^{\pm}_{n,K}$ respectively, and let
$$\psi(t,v)={\rm sign}(f(t,v)-g(t,v)).$$
Then we have
$$
|f(t,v)-g(t,v)|
=|f_0(v)-g_0(v)|+\int_{0}^{t}\big(Q_{*}(f)(\tau,v)-Q_{*}(g)(\tau,v)\big)\psi(\tau,v){\rm d}\tau.$$
For further estimates we need to show that $Q_{*}^{\pm}(f), Q_{*}^{\pm}(g)$ belong to $L^{\infty}([0, T]; L^1_2({\mR}^3))$ so that there is no problem of integrability.  In fact we have,
for instance for $Q_{*}^{\pm}(f)$,
\beqa\label{integrability}
&&\int_{\R^3} \la v\ra ^2 \big(Q^{+}_{*}(f)(t,v)+Q^{-}_{*}(f)(t,v)\big)\dd v\no\\
&&
\le (1+2K)\iiint_{\bRRS}\la v\ra^2 B(v-v_*,\si)({f}'{f}_*'+ff_*)\dd \si \dd v_* \dd v
  \no\\
  &&\le (1+2K)b\iiint_{\bRRS}(1+|v'|^2+|v_*'|^2)|v-v_*|({f}'{f}_*'+ff_*)\dd \si \dd v_* \dd v
  \no\\
  &&\le 2(1+2K)b\iiint_{\bRRS}\la v\ra^3 \la v_*\ra^3 {f}{f}_*\dd \si \dd v_* \dd v
  =2(1+2K)b|\bS| \|f(t)\|_{L^1_3}^2
  \eeqa
  where we used (\ref{identity}).
From this integrability we have
\beas&&\|f(t)-g(t)\|_{L^1_2}
=\|f_0-g_0\|_{L^1_2}+\int_{0}^{t}{\rm d}\tau\int_{{\bR}}\la v\ra ^2\big(Q_{*}(f)(\tau,v)-Q_{*}(g)(\tau,v)\big)\psi(\tau,v){\rm d}v,\\ \\
&&\int_{{\bR}}\la v\ra ^2\big(Q_{*}(f)(\tau,v)-Q_{*}(g)(\tau,v)\big)\psi(\tau,v){\rm d}v
\\
&&\le
\inttt_{{\bRRS}}
b|v-v_*|\big((1+2K)(f+g)|f_*-g_*|+ (ff_*+g g_*)
(|f'-g'|+|f_*'-g_*'|)
\big)\la v\ra^2{\rm d}\sg{\rm  d}v_*{\rm d}v
\\
&&\le b (1+2K)|{\bS}|
\intt_{{\bRR}}
\la v\ra ^2|v-v_*|(f+g) |f_*-g_*|{\rm  d}v_*{\rm d}v
\\
&&\quad +2b \int_{{\bR}} |f-g|\intt_{{\bRS}}\la v'\ra^2|v-v_*|f'f_*'{\rm d}\sg{\rm  d}v_* {\rm d}v
+2b \int_{{\bR}}|f-g|
\intt_{{\bRS}}\la v'\ra ^2|v-v_*|g'g_*'{\rm d}\sg{\rm  d}v_*{\rm d}v.
\eeas
Further estimates: From $|v-v_*|\le \la v\ra \la v_*\ra$ we have
$$
\intt_{{\bRR}}
\la v\ra^2|v-v_*|(f+g)|f_*-g_*|{\rm  d}v_*{\rm d}v\le
\|f+g\|_{L^1_3}\|f-g\|_{L^1_1}.$$
Combining this with Proposition \ref{proposition2} ($p=2, q=0, \gamma=1$) gives
\beas&&
\int_{{\bR}}\la v\ra^2\big(Q_{*}(f)(\tau,v)-Q_{*}(g)(\tau,v)\big)\psi(\tau,v){\rm d}v
\\
&&\le b(1+2K)|{\bS}|\|f+g\|_{L^1_3}\|f-g\|_{L^1_1}\\
&&\quad +
8b  |{\bS}|\big(\|f\|_{L^{\infty}}\|f\|_{L^{1}_{3}}
+\|f\|_{L^{\infty}_2}\|g\|_{L^{1}_{1}}+
\|g\|_{L^{\infty}}\|g\|_{L^{1}_{3}}
+\|g\|_{L^{\infty}_2}\|g\|_{L^{1}_{1}}
\big)\|f-g\|_{L^1_1}
\\
&&\le \big(b(1+2K)|{\bS}|\wt{C}_{T,K}+8b|{\bS}|\wt{C}^2_{T,K}\big)
\|f(\tau)-g(\tau)\|_{L^1_1},\quad \tau\in [0, T]\eeas
where we used (\ref{fn T Linfty}).
Thus we obtain
$$
\|f(t)-g(t)\|_{L^1_2}
\le \|f_0-g_0\|_{L^1_2}
+\big(b(1+2K)|{\bS}|\wt{C}_{T,K}+8b|{\bS}|\wt{C}^2_{T,K}\big)
\int_{0}^{t}\|f(\tau)-g(\tau)\|_{L^1_2}{\rm d}\tau
$$
for all $t\in [0, T]$. We then conclude from Gronwall's lemma that
\beas
\|f(t)-g(t)\|_{L^1_2}\le C_{T,K}\|f_0-g_0\|_{L^1_2} \quad \forall\, t\in [0, T]
\eeas
where $C_{T,K}=C_{T,K}
 (\|f_0\|_{L^{\infty}_3},\|f_0\|_{L^1_3},
 \|g_0\|_{L^{\infty}_3},\|g_0\|_{L^1_3})=
 \exp\big[\big(b(1+2K)|{\bS}|\wt{C}_{K,T}+8b|{\bS}|\wt{C}^2_{K,T}\big)T\big]$. From (\ref{fn T Linfty}) we see that the function $(y_1,y_2,y_3,y_4)\mapsto C_{T,K}(y_1,y_2,y_3,y_4)$ is independent of $n$, continuous
on $[0,\infty)^4$, and
 monotone non-decreasing with respect to each variable $y_1,y_2,y_3,y_4\in [0,\infty).$
 This proves (\ref{f-g}).

{\bf Step 2.} We now prove  (\ref{Theorem proposition}). Fix any $0<T<\infty$ and let  $\overline{K}=\max\Big\{\sup\limits_{0\le t\le T}\|f(t)\|_{L^{\infty}},\sup\limits_{0\le t\le T }\|g(t)\|_{L^{\infty}}\Big\}$. Following a similar argument in {\bf Step 1} we see that $f$ satisfies  (\ref{iteration1}), (\ref{fn infinity}) with $K=\overline{K}$.
  Then it follows from the proof of (\ref{fn T Linfty}) that
  \beas
&&\sup_{0\le t\le T}\big(\|f(t)\|_{L^{\infty}_2}+\|f(t)\|_{L^1_3}+
 \|g(t)\|_{L^{\infty}_2}+\|g(t)\|_{L^1_3}\big) \no  \\
&&\le \overline{C}_{T,\overline{K}}\big(\|f_0\|_{L^{\infty}_3},\sup\limits_{0\le t\le T}\|f(t)\|_{L^1_3},\sup\limits_{0\le t\le T}\|f(t)\|_{L^{\infty}},
 \|g_0\|_{L^{\infty}_3},\sup\limits_{0\le t\le T}\|g(t)\|_{L^1_3},\sup\limits_{0\le t\le T}\|g(t)\|_{L^{\infty}}\big)=: \overline{C}_{T,\overline{K}}\qquad
  \eeas
  where $\overline{C}_{T,\overline{K}}$ depends only on $T$ and $\|f_0\|_{L^{\infty}_3},\sup\limits_{0\le t\le T}\|f(t)\|_{L^1_3},\sup\limits_{0\le t\le T}\|f(t)\|_{L^{\infty}},
 \|g_0\|_{L^{\infty}_3},\sup\limits_{0\le t\le T}\|g(t)\|_{L^1_3},$ $\sup\limits_{0\le t\le T}\|g(t)\|_{L^{\infty}}.$
  Thus, (\ref{Theorem proposition}) can be proved with the same argument in the rest part of the proof in {\bf Step 1} by replacing $K$ , $\wt{C}_{T,K}$ with $\overline{K}$, $\overline{C}_{T,\overline{K}}$ respectively.
 \eepf
\vskip3mm

As an immediate application of this proposition we obtain the following continuity estimates for
 mild solutions of Eq.$(\ref{approximate equation 1})$ and Eq.$(\ref{approximate equation 2})$.

\begin{proposition}\label{equicontinuous} Let $0\le f_0\in L^1_3(\R^3)\cap L_3^{\infty}(\R^3)$ and
let $Q_{n,K}(\cdot)$ be defined in $(\ref{Qnk})$ with $B_n=B\wedge n$ and $B$ satisfying $(\ref{condition of kernel})$.
 With the same initial datum $f_0$, let $f^n$ and $f$
be  mild solutions of Eq.$(\ref{approximate equation 1})$ and Eq.$(\ref{approximate equation 2})$ respectively.
Then
\beqa\label{stability}
&&\sup_{n\ge 1, t\in [0, T]}\|f^n(t, \cdot +h)-f^n(t)\|_{L^1_2}\le C_{T,K}\|f_0(\cdot +h)-f_0\|_{L^1_2}\quad \forall\, |h|\le 1,\,\,\forall\, T\in (0,\infty)\quad \\
&&\label{uniform continuity in t}
\sup_{n\ge 1}\|f^n(t_1)-f^n(t_2)\|_{L^1}
\le 2b (1+2K)|{\bS}\|f_0\|_{L^1}\|f_0\|_{L^1_2}
|t_1-t_2| \quad \forall\, t_1,t_2\in [0,\infty)\quad \\
&&\sup_{t\in [0, T]}\|f(t, \cdot +h)-f(t)\|_{L^1_2}\le C_{T,K}\|f_0(\cdot +h)-f_0\|_{L^1_2}\quad \forall\, |h|\le 1,\,\,\forall\, T\in (0,\infty)\quad \\
&&
\|f(t_1)-f(t_2)\|_{L^1}
\le 2b (1+2K)|{\bS}\|f_0\|_{L^1}\|f_0\|_{L^1_2}
|t_1-t_2| \quad \forall\, t_1,t_2\in [0,\infty)\quad \eeqa
where $0<C_{T,K}<\infty$  depends only on  $T,\,K,\,\|f_0\|_{L^{\infty}_3},\, \|f_0\|_{L^1_3}.$
Consequently the sequence $\{f^n(t,\cdot)\}_{n=1}^{\infty}$ is both equicontinuous in $L^1_2({\mR}^3)$
uniformly in local time and equicontinuous in $C([0,\infty); L^1({\mR}^3))$.
\end{proposition}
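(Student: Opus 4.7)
The plan rests on two structural observations. First, the collision kernel $B(v-v_*,\sg)$ depends on the velocities only through the difference $v-v_*$, and the collisional law $(v,v_*)\mapsto (v',v_*')$ commutes with simultaneous translation in $v$. Consequently both Eq.$(\ref{approximate equation 1})$ and Eq.$(\ref{approximate equation 2})$ are translation invariant: if $f^n$ (resp.\ $f$) is a conservative mild solution with initial datum $f_0$, then $\widetilde f^n(t,v):=f^n(t,v+h)$ (resp.\ $\widetilde f(t,v):=f(t,v+h)$) is a conservative mild solution of the same equation with initial datum $v\mapsto f_0(v+h)$. Second, all our mild solutions conserve mass and energy, so $\|f^n(t)\|_{L^1_2}=\|f_0\|_{L^1_2}$ for every $t\ge 0$ and every $n$ (and similarly for $f$).

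\textbf{Spatial equicontinuity.} With the above identification I apply the stability estimate $(\ref{f-g})$ of Proposition \ref{stable} to the pair $(f^n,\widetilde f^n)$ (resp.\ $(f,\widetilde f)$) with initial data $(f_0, f_0(\cdot +h))$. For $|h|\le 1$ the elementary inequality $\la v-h\ra \le 2\la v\ra$ yields $\|f_0(\cdot+h)\|_{L^1_3}\le 8\|f_0\|_{L^1_3}$ and $\|f_0(\cdot+h)\|_{L^{\infty}_3}\le 8\|f_0\|_{L^{\infty}_3}$. By the monotonicity of the function $C_{T,K}(y_1,y_2,y_3,y_4)$ stated in Proposition \ref{stable}, the constant can then be replaced by one depending only on $T,\,K,\,\|f_0\|_{L^{\infty}_3},\,\|f_0\|_{L^1_3}$, uniformly in $n$ and in $h$ with $|h|\le 1$. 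This yields $(\ref{stability})$ and its analogue for $f$.

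\textbf{Temporal equicontinuity.} From the mild formulation I have
\beas
\|f^n(t_1)-f^n(t_2)\|_{L^1}\le \int_{t_2}^{t_1}\int_{\bR}\big(Q^+_{n,K}(f^n)+Q^-_{n,K}(f^n)\big)(\tau,v)\dd v\dd \tau.
\eeas
The prime-exchange identity $(\ref{identity})$ gives $\int_{\bR}Q^+_{n,K}(f^n)\dd v=\int_{\bR}Q^-_{n,K}(f^n)\dd v$, so the integrand is controlled by $2\int_{\bR} Q^-_{n,K}(f^n)\dd v$. Using $(f^n\wedge n)(f^n_*\wedge n)\le f^nf^n_*$, $(1+f'\wedge K+f_*'\wedge K)\le 1+2K$, the upper bound $B\le b|v-v_*|\le b(|v|+|v_*|)$ from $(\ref{condition of kernel})$, and the conservation laws (note $\la v\ra\ge 1$ so $\|f^n(\tau)\|_{L^1_1}\le \|f^n(\tau)\|_{L^1_2}=\|f_0\|_{L^1_2}$), Fubini then produces a bound of the form claimed in $(\ref{uniform continuity in t})$. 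Exactly the same computation, with $Q_K$ in place of $Q_{n,K}$, handles the temporal estimate for $f$.

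\textbf{Main obstacle, and the final equicontinuity statements.} The only genuinely delicate step is verifying that $\widetilde f^n$ (resp.\ $\widetilde f$) is itself a mild solution of the corresponding equation: this requires checking translation invariance at the level of the integrated mild formulation and ensuring that the exceptional null set appearing in Definition \ref{mild solutions} is transported correctly under the translation $v\mapsto v+h$; once this is settled, the remaining estimates reduce to routine applications of $(\ref{f-g})$ and of the conservation laws. The two equicontinuity conclusions then follow at once: the spatial uniform equicontinuity in $L^1_2$ is a consequence of $(\ref{stability})$ combined with the classical fact that $\|f_0(\cdot+h)-f_0\|_{L^1_2}\to 0$ as $h\to 0$ (since $f_0\in L^1_3\subset L^1_2$), while the equicontinuity in $C([0,\infty);L^1)$ is precisely the uniform Lipschitz estimate $(\ref{uniform continuity in t})$.
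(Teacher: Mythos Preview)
Your proposal is correct and follows essentially the same route as the paper: translation invariance of the equations plus the stability estimate $(\ref{f-g})$ for the spatial part, and a direct $L^1$ bound on $Q^{\pm}_{n,K}$ together with conservation of mass and energy for the temporal part. One minor point: your use of $|v-v_*|\le |v|+|v_*|$ followed by $\|f^n(\tau)\|_{L^1_1}\le \|f^n(\tau)\|_{L^1_2}$ yields the constant $4b(1+2K)|\bS|\|f_0\|_{L^1}\|f_0\|_{L^1_2}$, twice the one stated in $(\ref{uniform continuity in t})$; to recover the exact constant, use instead $|v-v_*|\le \la v\ra\la v_*\ra$ to get $(\|f^n(\tau)\|_{L^1_1})^2$ and then apply Cauchy--Schwarz $(\|f\|_{L^1_1})^2\le \|f\|_{L^1}\|f\|_{L^1_2}$, as the paper does.
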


\bepf We need only to prove the estimates for $f^n$ since the proof for $f$ is
completely the same.
From the structure of the collision $B$ (see (\ref{about v-v*})), it is easily seen that the velocity translation $g^n(t, v):=f^n(t, v+h)$ is still a mild solution to Eq.(\ref{approximate equation 1}) with
the initial datum $g_0(v):=f_0(v+h)$. Since
for any $|h|\le 1$ and any $s\in\{2,3\}$
\beas&& \|f_0(\cdot+h)\|_{L^1_s}\le 3^{s/2}\|f_0\|_{L^1_s},\quad
\|f_0(\cdot+h)\|_{L^{\infty}_s}\le 3^{s/2}\|f_0\|_{L^{\infty}_s}
\eeas
it follows from (\ref{f-g}) that (\ref{stability}) holds true.
Next for any $h\in \R^3$ with $|h|\le 1$ we have
$|\la v\ra^2-\la v-h\ra^2|f_0(v)
\le 3|h|\la v\ra
f_0(v)$ so that
\beqa\label{go to zero}\|f_0(\cdot+h)-f_0\|_{L_2^1}
\le 3|h|\|f_0\|_{L^1_1}+\|\wt{f_0}(\cdot+h)-\wt{f_0}\|_{L^1}
\to 0\quad {\rm as}\quad h\to 0\eeqa
where $\wt{f_0}(v)=\la v \ra^2f_0(v)$. This together with (\ref{stability})
proves the uniform local in time $L^1_2({\mR}^3)$-equicontinuity of the sequence
$\{f^n(t,\cdot)\}_{n=1}^{\infty}$.

Finally for any $t_1,t_2\ge 0$, using Cauchy-Schwarz inequality and the conservation of mass and energy of $f^n$  we have
\beas&&
\int_{{\bR}}|f^n(t_1,v)-f^n(t_2,v)|{\rm d}v
\\
&&\le \int_{t_1\wedge t_2}^{t_1\vee t_2}
{\rm d}\tau\inttt_{{\bRRS}}
B_n(v-v_*,\sg) ({f^n}'\wedge n)({f^n}_*'\wedge n)
(1+ {f^n}\wedge K+ {f_{*}^n}\wedge K){\rm d}\sg{\rm d}v_*{\rm d}v
\\
&&\quad +\int_{t_1\wedge t_2}^{t_1\vee t_2}
{\rm d}\tau\inttt_{{\bRRS}}
B_n(v-v_*,\sg) ({f^n}\wedge n)({f_{*}^n}\wedge n)
(1+ {f^n}'\wedge K+ {f_{*}^n}'\wedge K){\rm d}\sg{\rm d}v_*{\rm d}v
\\
&&\le 2b (1+2K)|{\bS}|\int_{t_1\wedge t_2}^{t_1\vee t_2}
\Big(\int_{{\bR}}\la v\ra f^n(\tau,v){\rm d}v\Big)^2{\rm d}\tau
\le 2b (1+2K)|{\bS}\|f_0\|_{L^1}\|f_0\|_{L^1_2}
|t_1-t_2|.\eeas
This proves (\ref{uniform continuity in t}).
\eepf
\vskip3mm

\section{Mild solutions of  the intermediate equation}

In this section we first prove the existence of conservative mild solutions of
Eq.(\ref{approximate equation 2}). Then we will use  multi-step iterations of
the collision gain operator $Q^{+}(\cdot, \cdot)$ to prove further estimates for these mild solutions which are used in proving Theorem {\ref{main results}.
Throughout this section, the same constant $K>0$ in Eq.$(\ref{approximate equation 1})$ and Eq.(\ref{approximate equation 2}) is fixed.

\subsection{Existence of mild solutions of  the intermediate equation}

\begin{proposition}\label{mild solution f}
Let $Q_{K}(\cdot)$ be defined in $(\ref{QK})$ with  $B$ satisfying $(\ref{condition of kernel})$, and let $0\le f_0\in L^1_3(\R^3)\cap L_3^{\infty}(\R^3)$.
  Then Eq.$(\ref{approximate equation 2})$ has a conservative mild solution $f\in L^{\infty}([0,\infty); L^1_2(\R^3))$ with the initial datum $f_0$, i.e.
\begin{equation}\label{mild sl}
 f(t,v)=f_0(v)+\int_{0}^{t}Q_{K}(f)(\tau,v){\rm d}\tau,\quad (t,v)\in [0,\infty)\times(\R^3\setminus Z).
\end{equation}
Here $Z$ is a null set independent of $t$.
\end{proposition}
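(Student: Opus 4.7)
I will construct $f$ as an $L^1$-limit of a subsequence of the conservative mild solutions $\{f^n\}_{n=1}^{\infty}$ of the cutoff equation $(\ref{approximate equation 1})$ produced by Proposition \ref{fn}. Since $f_0 \in L^1_3({\bR}) \cap L^{\infty}_3({\bR})$, Proposition \ref{moment propogation} with $s=3$ yields a uniform-in-$n$ moment bound $\sup_{t \in [0,T]}\|f^n(t)\|_{L^1_3} \le C_T$ on every bounded time interval, while conservation of mass and energy keeps $\|f^n(t)\|_{L^1_2}$ constant. Iterating the inequality $f^n(t) \le f_0 + (1+2K)\int_0^t Q^+(f^n(\tau), f^n(\tau))\dd\tau$ exactly as in the proof of Proposition \ref{stable}, and applying the pointwise estimates of Propositions \ref{proposition 2.3}--\ref{most important}, I obtain the further uniform bound $\sup_{n, t\in[0,T]}\|f^n(t)\|_{L^{\infty}_2} \le \wt C_{T,K} < \infty$.

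The next step is compactness. Tightness of $\{f^n(t,\cdot)\}$ uniform in $(n,t)$ follows from the $L^1_3$ bound; velocity-translation equicontinuity in $L^1$ follows from $(\ref{stability})$ together with $(\ref{go to zero})$; and time-equicontinuity of $t \mapsto f^n(t,\cdot) \in L^1(\bR)$ follows from $(\ref{uniform continuity in t})$. By the Kolmogorov-Riesz criterion and an Ascoli-Arzel\`a argument applied to $t \mapsto f^n(t,\cdot)$ valued in $L^1(\bR)$, followed by a diagonal procedure as $T \to \infty$ and passage to a further pointwise-a.e.\ subsequence, I extract $\{f^{n_k}\}$ converging to some $f \in C([0,\infty); L^1(\bR))$ uniformly on compact time intervals, with $f^{n_k} \to f$ a.e.\ on $[0,\infty) \times \bR$. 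Fatou's lemma then gives $f \in L^{\infty}_{{\rm loc}}([0,\infty); L^1_3(\bR) \cap L^{\infty}_2(\bR))$.

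I then pass to the limit in $f^n(t,v) = f_0(v) + \int_0^t Q_{n,K}(f^n)(\tau,v)\dd\tau$. For each fixed $T$ and $n$ large enough, the uniform $L^{\infty}$ bound forces $f^n \wedge n = f^n$ on $[0,T] \times \bR$, so only the $\wedge K$ truncations remain in $Q_{n,K}(f^n)$. Lemma \ref{LemmaF} gives $|f^n \wedge K - f \wedge K| \le |f^n - f|$ and $B_n \nearrow B \le b|v-v_*|$; combined with the uniform $L^{\infty}_2 \cap L^1_3$ bounds and $|v-v_*| \le \la v \ra \la v_* \ra$, the collision integrands are dominated uniformly by an integrable function on $\bR \times \bR \times \bS$. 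Dominated convergence then gives $Q_{n_k,K}(f^{n_k})(\tau,v) \to Q_K(f)(\tau,v)$ pointwise a.e.\ and in $L^1_{{\rm loc}}$, and after integration in $\tau$ the identity $(\ref{mild sl})$ holds for almost every $v$ at each $t$; Lemma \ref{Lemma H} together with continuity of $t \mapsto f(t,\cdot)$ in $L^1$ allows the null set $Z$ to be chosen independent of $t$. Conservation of mass, momentum and energy for $f$ passes to the limit from the test-function identity $(\ref{test fuc})$ for $f^n$ with $\vp \in \{1, v_i, |v|^2\}$, using the uniform $L^1_3$-moment bound to justify the limiting step, so in particular $f \in L^{\infty}([0,\infty); L^1_2(\bR))$.

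The main obstacle will be the identification of the limit of the collision integrals involving the post-collision traces $(f^{n_k})'$ and $(f^{n_k}_*)'$: while $L^1$-convergence of $f^{n_k}(t,\cdot)$ does not directly imply $L^1$-convergence of the traces on the post-collision manifold, the $\wedge K$ cutoff keeps the cubic factor $1 + f^n \wedge K + f^n_* \wedge K$ uniformly bounded, and combining the uniform $L^{\infty}_2$ bound with Proposition \ref{proposition2} (applied with $p=q=0$, $\gamma=1$) gives the equi-integrability of $B|v-v_*|(f^n)'(f^n_*)'$ on $\bR \times \bR \times \bS$; this, together with pointwise a.e.\ convergence of $(f^{n_k})'$, $(f^{n_k}_*)'$ extracted via Fubini from the full space-time a.e.\ convergence of $f^{n_k}$, closes the dominated-convergence argument.
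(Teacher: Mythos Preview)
Your overall strategy matches the paper's almost exactly: both proofs build $f$ as an $L^1$-limit of a subsequence of the cutoff solutions $\{f^n\}$ from Proposition~\ref{fn}, using Proposition~\ref{moment propogation} for moment control, Proposition~\ref{equicontinuous} for compactness in velocity and time, a diagonal extraction to get a limit $f$, and Lemma~\ref{Lemma H} to make the exceptional null set $t$-independent. Your additional use of the uniform $L^{\infty}_2$ bound from the $Q^+$-iteration of Proposition~\ref{stable} to force $f^n\wedge n=f^n$ for large $n$ is a legitimate simplification the paper does not exploit at this stage.

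There is, however, an imprecision in your final paragraph. The assertion that the collision integrands are ``dominated uniformly by an integrable function on $\bR\times\bR\times\bS$'' is not correct as stated: the $L^{\infty}_2$ bound only gives $(f^n)'(f^n_*)'\le C\la v'\ra^{-2}\la v_*'\ra^{-2}$, and $|v-v_*|\la v'\ra^{-2}\la v_*'\ra^{-2}$ is \emph{not} integrable on $\bR\times\bS$ for fixed $v$ (after the change of variable of Proposition~\ref{proposition infty} one is left with $\int_{\bR}|v-v_*|\la v_*\ra^{-2}\dd v_*=\infty$). One can salvage your route via Vitali's theorem, proving genuine equi-integrability from the uniform $L^1_{1+\epsilon}$ moments (a weighted version of Proposition~\ref{proposition2} with $\gamma=1+\epsilon$ controls the tails), but this requires more than the single application of Proposition~\ref{proposition2} with $p=q=0,\gamma=1$ that you cite.

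The paper sidesteps this entirely by a different organization of the limit passage. Rather than passing to the limit pointwise in $v$, the paper integrates in $v$ first and shows
\[
\int_0^{T}\!\!\int_{\bR}\Big|\wt f(t,v)-f_0(v)-\int_0^{t}Q_K(\wt f)(\tau,v)\dd\tau\Big|\dd v\,\dd t=0,
\]
bounding the discrepancy by two $L^1$-differences, $\int|Q_{n_k,K}(\wt f)-Q_K(\wt f)|$ and $\int|Q_{n_k,K}(f^{n_k})-Q_{n_k,K}(\wt f)|$. In each of these the exchange-prime identity (\ref{identity}) converts all post-collision evaluations $(\cdot)',(\cdot)_*'$ into pre-collision ones, so the convergence reduces to the $L^1$ (hence $L^1_1$, by interpolation with the uniform $L^1_2$ bound) convergence of $f^{n_k}$ already in hand; no pointwise trace argument or Vitali-type equi-integrability is needed. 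The pointwise identity (\ref{mild sl}) with $t$-independent null set is then recovered a posteriori via Lemma~\ref{Lemma H}. This is the cleaner way to close the argument.
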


\bepf Our proof consists of two steps. In the first step we shall use mild solutions of Eq.(\ref{approximate equation 1}).

{\bf Step 1.}  Let $\{f^n\}_{n= 1}^{\infty}$ be a sequence of mild solutions of
Eq.(\ref{approximate equation 1}) obtained in Proposition \ref{fn} with the same initial data $f_0^n=f_0, n=1,2,3,...\,.$  By conservation of mass and energy of $\{f^n\}_{n= 1}^{\infty}$ we have $\sup\limits_{n\ge 1, t\ge 0}\|f^n(t)\|_{L^1_2}=\|f_0\|_{L^1_2}<\infty$.
Next for any fixed $t\ge 0$ we deduce from (\ref{stability}) that $\{f^n(t,\cdot)\}_{n= 1}^{\infty}$ is equicontinuous in $L^1(\R^3)$. These imply that $\{f^n(t,\cdot)\}_{n= 1}^{\infty}$ is a relatively compact set in $L^1(\bR)$
(for any fixed $t>0$).
 Using diagonal argument we can find a common subsequence $\{n_k\}_{k= 1}^{\infty}\subset {\mN}$
such that for any $r\in \mQ\cap [0,\infty)$, $\{f^{n_k}(r,\cdot)\}_{k= 1}^{\infty}$ is a Cauchy sequence in $L^1(\R^3)$.
Then for any fixed $t\ge 0$, let us consider $\{f^{n_k}(t,\cdot)\}_{k=1}^{\infty}.$ It follows from (\ref{uniform continuity in t}) that for any  $\vep >0,$ there exists an $r\in \mQ\cap [0,\infty)$, such that
$\|f^{n_k}(t)-f^{n_k}(r)\|_{L^1}\le \fr{\vep}{3}$ for all  $k\in{\mN},$  and thus
\beas
&&\|f^{n_k}(t)-f^{n_j}(t)\|_{L^1}
\le
\|f^{n_k}(t)-f^{n_k}(r)\|_{L^1}+\|f^{n_j}(t)-f^{n_j}(r)\|_{L^1}
+\|f^{n_k}(\tau)-f^{n_j}(r)\|_{L^1} \\
&&\le \fr{\vep}{3}+\fr{\vep}{3}+\|f^{n_k}(r)-f^{n_j}(r)\|_{L^1}\quad \forall\, k,j\in \N.
\eeas
Since $\{f^{n_k}(r,\cdot)\}_{k= 1}^{\infty}$ is a Cauchy sequence in $L^1(\bR)$, it follows from the arbitrariness of $\vep$ that  $\{f^{n_k}(t,\cdot)\}_{k=1}^{\infty}$ is a Cauchy sequence in $L^1(\bR)$ (for any fixed $t\ge 0$), i.e.
\beqa\label{Cauchy 1}\lim_{k\ge j\to\infty}\|f^{n_k}(t)-f^{n_j}(t)\|_{L^1}=0\qquad \forall\, t\in [0,\infty).\eeqa
Since all $f^n$ are conservative mild solutions of
Eq.(\ref{approximate equation 1}) with the same initial data $f_0,$ it follows that $\{f^{n_k}\}_{k=1}^{\infty}$ is a nonnegative bounded sequence in $L^{\infty}([0,\infty); L^1(\R^3))$. Thus, it follows from (\ref{Cauchy 1}) and Lemma \ref{lemma of L(infty)([0,T];L1)} that there exists a function $0\le \wt{f}\in L^{\infty}([0,\infty); L^1(\R^3))$ such that for any $t\in [0,\infty)$
 \beqa\label{L1 convergence}
\lim_{k\to\infty}\|f^{n_k}(t)-\wt{f}(t)\|_{L^1}=0.
\eeqa

{\bf Step 2.} We prove that the above function $\wt{f}$, after a modification on a null set,
is a conservative mild solution of Eq.(\ref{approximate equation 2}).
Let $0<R<\infty$ and  let $0< T<\infty$. Define $\vp(v)= \la v\ra^3\wedge R$ for any $v\in \R^3$.
   Since $\vp\in L^{\infty}(\R^3)$ and $f_0\in L_3^1(\R^3)$, it follows from Proposition \ref{moment propogation} and (\ref{L1 convergence}) that for any $t\in [0,T]$
  \beqa\label{small 1}
  &&\int_{\R^3} (\la v\ra^{3}\wedge R)\wt{f}(t,v) \dd v=\lim_{k\to \infty}\int_{\R^3}(\la v\ra ^3\wedge R)f^{n_k}(t,v)
\dd v\no\\
  &&\le \lim_{k\to \infty}\int_{\R^3}\la v\ra ^3 f^{n_k}(t,v)\dd v
    \le \|f_0\|_{L^1_3}
+(1+2K)2^{\fr{7}{2}}b |\bS|\|f_0\|_{L^1_2}^{2}T:=C_{f_0,T}<\infty \eeqa
 and so, by Fatou's lemma,
 \beqa\label{wt(f) L31}
 \|\wt{f}(t,v)\|_{L_3^1}
 \le C_{f_0,T}.
 \eeqa
 Next from (\ref{integrability}) we have for any $t\in [0,T]$
\beqa\label{Q+ infty}
\int_{\R^3} \la v\ra^2 \big(Q^{+}_K(\wt{f})(t,v)+Q^{-}_K(\wt{f})(t,v)\big)\dd v
  \le 2(1+2K)b|\bS| \|\wt{f}\|_{L^1_3}^2
  \le 2(1+2K)b|\bS| C_{f_0,T}^2 <\infty.
\eeqa
 This together with the fact that $0\le Q_{n,K}^{\pm}(\wt{f})\le Q_{K}^{\pm}(\wt{f})$ and Fatou's lemma implies that
\beqa \label{f=f0+int...}
&&\int_0^{T}\dd t\int_{\bR}\Big|\wt{f}(t,v)-f_0(v)
-\int_{0}^{t}Q_K(\wt{f})(\tau,v)\dd\tau \Big|\dd v
\no\\
&&\le \int_0^{T}\dd t\liminf_{k\to \infty}\int_0^t\dd \tau\Big(\int_{\bR}\big|Q_{n_k,K}(\wt{f})(\tau,v)-Q_K(\wt{f})(\tau,v)\big|\dd v
\no\\
&&\quad +\int_{\bR}\big|Q_{n_k,K}(f^{n_k})(\tau,v)-Q_{n_k,K}(\wt{f})(\tau,v)\big|\dd v \Big).
\eeqa
To estimate the first term in the right hand side of the above inequality we compute
\beqa\label{intQnk}
&&\int_{\bR}\big|Q_{n_k,K}(\wt{f})(\tau,v)-Q_K(\wt{f})(\tau,v)\big|\dd v
\no\\
&&\le
2(1+2K)\inttt_{{\bRRS}}\Big(\big|B_{n_k}(v-v_*,\sg)
(\wt{f}\wedge n_k)(\wt{f}_*\wedge n_k)-B_{n_k}(v-v_*,\sg)
\wt{f}{\wt{f}}_*\big|
\no\\
&&\quad +\big|B(v-v_*,\sg)(\wt{f}\wedge n_k)({\wt{f}}_*\wedge n_k)
-B(v-v_*,\sg){\wt{f}}{\wt{f}}_*\big|
\Big) {\rm d}\sg{\rm d}v_*\dd v
\no\\
&&\le 8b(1+2K)\inttt_{{\bRRS}}\la v\ra\la v_*\ra
\big|
(\wt{f}\wedge n_k)({\wt{f}}_*\wedge n_k)-
\wt{f}{\wt{f}}_*\big| {\rm d}\sg{\rm d}v_*\dd v
\no\\
&&\le
16b(1+2K)C_{f_0,T}^2\quad ({\rm uniformly \,\, in\,\,} \tau\in[0,T] ).
 \eeqa
Since
 $
 \la v\ra\la v_*\ra
\big|(\wt{f}\wedge n_k)({\wt{f}}_*\wedge n_k)-
{\wt{f}}{\wt{f}}_*\big|
\le
2 \la v\ra\la v_*\ra {\wt{f}}{\wt{f}}_*,
 $
 it follows from Lebesgue's dominated convergence that
 \beas
 \inttt_{{\bRRS}}\la v\ra\la v_*\ra
\big|(\wt{f}\wedge n_k)({\wt{f}}_*\wedge n_k)-
{\wt{f}}{\wt{f}}_*\big|
{\rm d}\sg{\rm d}v_*\dd v\to 0 \quad {\rm as}\,\, k\to \infty.
 \eeas
 This together with (\ref{intQnk}) implies that
 \beqa
 \int_{\bR}\big|Q_{n_k,K}(\wt{f})(\tau,v)-Q_K(\wt{f})(\tau,v)\big|\dd v \to 0 \quad {\rm as}\,\, k\to \infty.
 \eeqa
 Then from (\ref{intQnk}) and Lebesgue's dominated convergence we obtain
 \beqa
 \lim_{k\to \infty}\int_0^t\dd \tau\int_{\bR}\big|Q_{n_k,K}(\wt{f})(\tau,v)-Q_K(\wt{f})(\tau,v)\big|\dd v=0\qquad \forall\, t\in[0,\infty).
 \eeqa
 Following a similar method we have
 \begin{equation}\label{term2 int}
 \lim_{k\to \infty}\int_0^t\dd \tau\int_{\bR} \big|Q_{n_k,K}(f^{n_k})(\tau,v)-Q_{n_k,K}(\wt{f})(\tau,v)\big|\dd v=0\qquad \forall\, t\in[0,\infty).
 \end{equation}
Combining these with (\ref{f=f0+int...})  we conclude
\beas
\int_{\bR}\dd v\int_0^{T}\Big|\wt{f}(t,v)-f_0(v)-\int_{0}^{t}Q_K(\wt{f})(\tau,v)\dd \tau\Big|\dd t=0.
\eeas
Since $0<T<\infty$ is arbitrary, it follows from Fatou's lemma that
\beqa\label{zero}
\int_{\bR}\dd v\int_0^{\infty}\Big|\wt{f}(t,v)-f_0(v)
-\int_{0}^{t}Q_K(\wt{f})(\tau,v)\dd \tau\Big|\dd t=0.
\eeqa
Let $f(t,v):=|f_0(v)+\int_{0}^{t}Q_K(\wt{f})(\tau,v){\rm d}\tau|,\,\, (t,v)\in[0,\infty)\times{\R}^3$.
We see that $f$ is nonnegative and there is a null set $Z_1\subset {\R}^3$ (independent of $t$) such that
for any fixed $v\in {\mR}^3\setminus Z_1$ the
function $t\mapsto f(t,v)$ is continuous on $[0,\infty)$. This is because the functions
$(\tau,v)\mapsto Q_{K}^{\pm}(\wt{f})(\tau,v)$ belong to $L^1([0, T]\times {\mR}^3)$ for all
$0<T<\infty$, and so by Fubini's theorem there is a null  set $Z_1\subset {\R}^3$ such that
$\int_{0}^{T}|Q_K(\wt{f})(\tau,v)|{\rm d}\tau<\infty$ for all $0<T<\infty$ and all $v\in {\mR}^3\setminus Z_1$.
From (\ref{zero}) and the nonnegativity of $\wt{f}$ we see that
\beqa\label{f=wt(f)}
f(t,v)=\wt{f}(t,v) \quad{\rm for\,\,\, a.e.}\,\, (t,v)\in [0,\infty)\times \R^3.
\eeqa
 Thus
\beas
\int_0^{\infty}\dd t\int_{\R^3}\big|f(t,v)-\wt{f}(t,v)\big|\dd v=0.
\eeas
 This implies, using Fubini's theorem, that there exists a null $\wt{Z}\subset [0,\infty)$ such that
\beas
\int_{\R^3}\big|f(t,v)-\wt{f}(t,v)\big|\dd v=0\qquad \forall\, t\in [0,\infty)\setminus \wt{Z}.
\eeas
This together with  (\ref{wt(f) L31}) implies that for any $0<T<\infty$ and any $t\in [0, T]\setminus \wt{Z}$,
$\|f(t)\|_{L_3^1}=\|\wt{f}(t)\|_{L_3^1}\le C_{f_0,T}<\infty.$
Since $t\mapsto f(t,v)$ is continuous on $[0,\infty)$ for any fixed $v\in \R^3\setminus Z$, it follows from Fatou's lemma and (\ref{small 1}) that
\beqa\label{mono}
\|f(t)\|_{L_3^1}
\le
C_{f_0,T}\quad \forall\, t\in [0, T],\,\,\forall\, 0<T<\infty.
\eeqa
Thus, from (\ref{integrability}) we have
\beqa\label{Qk(f) L21}
\sup\limits_{0\le t\le T} \|Q_K^{\pm}(f)(t,\cdot)\|_{L_2^1}<\infty
\quad {\rm for\quad any}\,\, 0<T<\infty.
\eeqa
This together with (\ref{f=wt(f)}), the boundedness of the mapping $g\mapsto |g|\wedge K$,
the formula (\ref{identity}) of change variables,
and the arbitrariness of $T$  imply that
\beas
Q_K(\wt{f})(t,v)=Q_K(f)(t,v)\quad {\rm a.e.}\qquad (t,v)\in[0,\infty)\times{\mR}^3.
\eeas
Combining this with (\ref{zero}) and (\ref{f=wt(f)}) leads to
$$
f(t,v)=f_0(v)+\int_{0}^{t}Q_K(f)(\tau,v)\dd \tau \qquad {\rm a.e.}\quad (t,v)\in[0,\infty)\times{\mR}^3.
$$
Since by definition of $f$,  $f(t,v)$ is fully measurable on $[0,\infty)\times{\mR}^3$ and
 the function $t\mapsto f(t,v)$ is continuous on $ [0,\infty)$ for almost every $v\in {\mR}^3$, while using Fubini's theorem it is easily seen that the function $ f_0(v)+\int_{0}^{t}Q_K(f)(\tau,v)\dd \tau$ is fully measurable on $[0,\infty)\times{\mR}^3$ and
 $t\mapsto f_0(v)+\int_{0}^{t}Q_K(f)(\tau,v)\dd \tau$
 is continuous on $t\in [0,\infty)$ for almost every
 $v\in {\mR}^3$,
it follows from Lemma \ref{Lemma H} that
there is a null set $Z\subset {\mR}^3$ such that
\beqa\label{f is a mild solution}
f(t,v)=f_0(v)+\int_{0}^{t}Q_{K}(f)(\tau,v){\rm d}\tau \quad \forall\,(t,v)\in [0,\infty)\times (\R^3\setminus Z).
\eeqa
 To prove that $f$ is a conservative mild solution of Eq.(\ref{approximate equation 2}), we now need only to prove the conservation law of $f$. Let $\vp\in C({\R}^3)$ satisfy $|\vp(v)|\le C\la v\ra^2$ for constant $0<C<\infty$.
From (\ref{Qk(f) L21}) and (\ref{f is a mild solution}) we have
  \beas\int_{{\R}^3}\vp(v) f(t,v){\rm d}v=\int_{{\R}^3} \vp(v)f_0(v){\rm d}v
+\int_0^t {\rm d}\tau\int_{{\R}^3}\la v\ra ^2 Q_{K}(f)(\tau,v) \dd v,\quad  t\in [0, \infty)
  \eeas
 and so for a.e. $t\in [0,\infty)$
  \beas
  &&\fr{\dd}{\dd t} \int_{{\R}^3} \vp(v)f(t,v){\rm d}v
  =\fr{1}{2}\inttt_{{\bRRS}}
B(v-v_*,\sg)
ff_*
(1+ f'\wedge K+ f_*'\wedge K)\\
&&\quad\times
\big(\vp(v')+\vp(v_*')-\vp(v)-\vp(v_*)\big){\rm d}\sg{\rm d}v_*{\rm d}v.
  \eeas
  Taking $\vp(v)=1$ implies that $f$ conserves the mass. While
taking $\vp(v)=|v-{\bf a}|^2$ with any constant vector ${\bf a}\in {\R}^3$ we also have
$\vp(v')+\vp(v_*')-\vp(v)-\vp(v_*)\equiv 0$ and thus
  \beas
  \int_{\R^3}|v-{\bf a}|^2f(t,v)\dd v=\int_{\R^3}|v-{\bf a}|^2f_0(v)\dd v \quad \forall\,t\in [0,\infty).
  \eeas
By choosing ${\bf a}=0,(1,0,0),(0,1,0),(0,0,1)$ respectively and using the conservation of mass of $f$ we conclude that $f$ conserves  energy and momentum:
$$\int_{\R^3}|v|^2 f(t,v) \dd v=\int_{\R^3}|v|^2 f_0(v) \dd v,\quad \int_{\R^3}v f(t,v) \dd v=\int_{\R^3}v f_0(v) \dd v\qquad \forall\, t\in [0,\infty).$$
\eepf
\vskip3mm

  \subsection{Some estimates for collision integral of mild solutions of the intermediate equation}

Recall that in order to use mild solutions $f$ of Eq.(\ref{approximate equation 2}) to prove the global in time existence of solutions of Eq.(\ref{Equation}),
one needs only to prove that $f\le K$ for a suitable $0<K<\infty$. To do this we will establish the next two Propositions and use Duhamel's formula to deduce certain pointwise estimates for
$f$ in Proposition \ref{Duhamel}.  Then we will give further propositions which are based on multi-step iterations of the collision gain operator $Q^{+}(\cdot,\cdot)$ and the estimates obtained in Proposition \ref{Duhamel}.
For notational convenience we will use in this subsection the notation $f_t(v)\equiv f(t,v)$.

\begin{proposition}\label{L3}
Let $Q_{K}(\cdot)$ be defined in $(\ref{QK})$ with  $B$ satisfying $(\ref{condition of kernel})$. For any $0\le f_0\in L^1_3(\R^3)\cap L_3^{\infty}(\R^3)$, let $f$
be a mild solution of Eq.$(\ref{approximate equation 2})$ obtained in Proposition \ref{mild solution f} corresponding to the kernel $B$ with the initial datum $f_0$.
  Then
  \beqa\label{L3 estimate}
  \|f(t)\|_{L_3^1}
  \le \max\{1,C_1\}\|f_0\|_{L_3^1}
 \qquad \forall\, t\ge 0
  \eeqa
  where
  \beqa\label{C1 def}
  &&
  C_1=\fr{\big(128(\sqrt{2}-1)(1+2K)b
  +2(\fr{22}{5}\sqrt{2}-\fr{31}{5})a\big)\|f_0\|_{L_2^1}^2}{a (\fr{22}{5}\sqrt{2}-\fr{31}{5}) \|f_0\|_{L^1}\|f_0\|_{L_3^1}}\,.\eeqa

  \end{proposition}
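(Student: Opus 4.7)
The plan is to derive a differential inequality of the form $\frac{d}{dt} M_3(t) \le 0$ whenever $M_3(t) \ge C_1 \|f_0\|_{L^1_3}$, where $M_3(t) := \|f(t)\|_{L^1_3}$; the estimate (\ref{L3 estimate}) will then follow by a standard ODE comparison argument. Because $f$ is only a mild solution of (\ref{approximate equation 2}), the first step is to justify the weak formulation: I would test the mild equation against the truncated weight $\la v\ra^3 \wedge R$ and pass $R\to\infty$, using the moment bound of Proposition \ref{moment propogation} together with the integrability estimate from the proof of Proposition \ref{stable} (to control $Q_K^{\pm}(f)$) to invoke dominated convergence. After symmetrization in $(v,v_*)$, this yields
$$\frac{d}{dt} M_3(t) = \frac{1}{2}\iiint B\, f f_*(1+f'\wedge K + f_*'\wedge K)\,\Delta_3(v,v_*,\sigma)\, d\sigma\, dv_*\, dv,$$
with $\Delta_3 = \la v'\ra^3 + \la v_*'\ra^3 - \la v\ra^3 - \la v_*\ra^3$, valid for almost every $t\ge 0$.

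Next, Corollary \ref{LemmaE} with $s=3,\gamma=1$ decomposes $\Delta_3$ into $4(\sqrt{2}-1)(\la v\ra^2\la v_*\ra + \la v\ra\la v_*\ra^2)$ minus $\tfrac{1}{16}\kappa(\theta)^{3/2}\la v\ra^3$. For the positive contribution, I bound $(1+f'\wedge K+f_*'\wedge K)\le 1+2K$, $B\le b\la v\ra\la v_*\ra$, integrate over $\sigma$, and use conservation of energy $M_2(t) = \|f_0\|_{L^1_2}$ to obtain a bound of the form $c(1+2K)\, b\, \|f_0\|_{L^1_2}\, M_3(t)$. For the negative contribution, I keep $(1+f'\wedge K+\cdots)\ge 1$, use $B\ge a|v-v_*|^{\beta+1}/(1+|v-v_*|^\beta)$, and compute explicitly
$$\int_{S^2} \kappa(\theta)^{3/2}\, d\sigma = \tfrac{\pi (22\sqrt{2}-31)}{5},$$
by splitting $\theta\in[0,\pi]$ at $\pi/2$ and substituting $u=\cos(\theta/2)$ (respectively $\sin(\theta/2)$), reducing to an elementary polynomial integral whose closed form involves $a^4(1+2\sqrt{2})$ with $a=1-\sqrt{2}/2$. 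This is precisely where the constants $(22\sqrt{2}-31)/5$ of the statement originate.

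The crux is to lower bound $\iint \frac{|v-v_*|^{\beta+1}}{1+|v-v_*|^\beta}\la v\ra^3 f f_*\, dv\, dv_*$ by a quantity that absorbs the positive part. The hypothesis of zero mean velocity (which is preserved in time and ensured by the hypotheses of Proposition \ref{mild solution f}) is essential here: applying Jensen's inequality to the convex function $v_*\mapsto |v-v_*|$ and using $\int v_* f_*\, dv_*=0$,
$$\int |v-v_*|\, f_*(v_*)\, dv_* \ge M_0\,|v|\qquad \forall\, v\in \R^3.$$
Coupling this with $|v-v_*|^{\beta+1}/(1+|v-v_*|^\beta) \ge \tfrac{1}{2}|v-v_*|$ on $\{|v-v_*|\ge 1\}$ and the Cauchy--Schwarz interpolation $M_4\ge M_3^2/M_2$ (to handle the tail correction on $\{|v|<1\}$), I expect to reach a lower bound of essentially the form $c' \|f_0\|_{L^1}\, M_3 - c''\|f_0\|_{L^1_2}^2$. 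Substituting both bounds into the differential inequality (the factor $\pi$ in $c_\kappa$ cancels against the $4\pi$ from the $\sigma$ integration in the positive estimate) and rearranging, the critical threshold is exactly $C_1\|f_0\|_{L^1_3}$, and the ODE comparison then yields the claim.

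The main obstacle is this final quantitative step: producing a lower bound on the weighted double integral sharp enough to recover exactly the constants $128(\sqrt{2}-1)(1+2K)b$ and $2(\tfrac{22\sqrt{2}-31}{5})a$ in the numerator of $C_1$, while simultaneously handling the small-$|v-v_*|$ correction (where the kernel degenerates like $|v-v_*|^{\beta+1}$) and the low-velocity tail (where $|v|\la v\ra^3$ loses its $|v|$ factor) in a balanced, quantitatively consistent manner.
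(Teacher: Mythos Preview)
Your overall architecture matches the paper's: derive $\frac{d}{dt}\|f(t)\|_{L^1_3}\le -c_1\|f(t)\|_{L^1_3}^2+c_2\|f(t)\|_{L^1_3}$ via Corollary~\ref{LemmaE}, the explicit value $\int_{\bS}\kappa(\theta)^{3/2}d\sigma=\frac{(22\sqrt{2}-31)\pi}{5}$, and the Cauchy--Schwarz bound $\|f\|_{L^1_4}\ge\|f\|_{L^1_3}^2/\|f\|_{L^1_2}$, then read off the threshold $c_2/c_1=C_1\|f_0\|_{L^1_3}$. Two points need correction.

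\textbf{Zero momentum is not a hypothesis.} Neither Proposition~\ref{mild solution f} nor the present statement assumes $\int vf_0\,dv=0$, so your Jensen step $\int|v-v_*|f_*\,dv_*\ge M_0|v|$ is unjustified here. The paper instead uses the elementary algebraic identity $\frac{|v-v_*|^{\beta+1}}{1+|v-v_*|^\beta}=|v-v_*|-\frac{|v-v_*|}{1+|v-v_*|^\beta}$ together with $|v-v_*|\ge|v|-|v_*|$ and $|v|\la v\ra^3\ge\la v\ra^4-\la v\ra^3$, yielding the lower bound $\|f_0\|_{L^1}\|f\|_{L^1_4}-\|f\|_{L^1_1}\|f\|_{L^1_3}-\|f_0\|_{L^1}\|f\|_{L^1_3}$ (the correction term satisfies $\frac{r}{1+r^\beta}\le 1$). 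After Cauchy--Schwarz this is \emph{quadratic} in $\|f\|_{L^1_3}$, namely $\|f_0\|_{L^1}\|f\|_{L^1_3}^2/\|f_0\|_{L^1_2}-2\|f_0\|_{L^1_2}\|f\|_{L^1_3}$; your stated form ``$c'\|f_0\|_{L^1}M_3-c''\|f_0\|_{L^1_2}^2$'' is linear in $M_3$ and would give only a linear ODE, which does not produce the threshold $C_1\|f_0\|_{L^1_3}$.

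\textbf{Finiteness of $\|f(t)\|_{L^1_4}$.} The interpolation $\|f\|_{L^1_4}\ge\|f\|_{L^1_3}^2/\|f\|_{L^1_2}$ is vacuous unless $\|f(t)\|_{L^1_4}<\infty$, but the hypothesis $f_0\in L^1_3$ does not guarantee this (Proposition~\ref{moment propogation} only propagates moments the data already has). The paper closes this gap by a two-stage argument: first run the whole differential-inequality proof assuming $f_0\in\bigcap_{s\ge 3}L^1_s$, so that (\ref{L1s}) supplies $\|f(t)\|_{L^1_4}<\infty$; then approximate a general $f_0$ by $f_0^n(v)=f_0(v)e^{-|v|^2/n}$, apply the result to the corresponding mild solutions $f^n$, and pass to the limit using the stability estimate (\ref{f-g}) of Proposition~\ref{stable} together with Fatou's lemma and the monotone dependence of $C_1$ on $\|f_0^n\|_{L^1},\|f_0^n\|_{L^1_2}$. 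Your proposal omits this approximation step entirely.
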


  \bepf We first prove that (\ref{L3 estimate}) and
  (\ref{C1 def}) hold true for the case $f_0\in \bigcap_{s\ge 3} L_s^1(\R^3)$.
  For any $s\ge 3$ and $R>0$, define $\vp(v)= \la v\ra^s\wedge R$. Since $\vp\in L^{\infty}(\R^3)$, it follows from Proposition \ref{moment propogation} and the same proof of (\ref{mono}) that
  \beas
  \int_{\R^3} (\la v\ra^s\wedge R)f(t,v) \dd v &\le & \int_{\R^3} (\la v\ra^s\wedge R)\wt{f}(t,v)\dd v
  =
  \lim_{k\to \infty}\int_{\R^3}(\la v\ra^s\wedge R)f^{n_k}(t,v)\dd v
  \\
  &\le & \Big(\|f_0\|_{L^1_s}^{\fr{1}{s-2}}
  +(1+2K)2^{\fr{s}{2}+2}|\bS|b(\|f_0\|_{L^1_2})^{\fr{s-1}{s-2}}
  \fr{t}{s-2}\Big)
  ^{s-2}\qquad \forall\, t\ge 0
  \eeas
  where $\wt{f}$ and $f^{n_k}$ are defined in Proposition \ref{mild solution f}.
  Letting $R\to \infty$ we obtain by Fatou's lemma that
  \beqa\label{L1s}
 \|f(t)\|_{L_s^1}
  \le \Big(\|f_0\|_{L^1_s}^{\fr{1}{s-2}}
  +(1+2K)2^{\fr{s}{2}+2}|\bS|b(\|f_0\|_{L^1_2})^{\fr{s-1}{s-2}}
  \fr{t}{s-2}\Big)
  ^{s-2}.
  \eeqa
    This implies that $\sup\limits_{0\le t\le T}\|f(t)\|_{L_s^1}<\infty$ for all $s\ge 3$ and all $0<T<\infty$.
  Since $|v-v_*|\le \la v\ra \la v_*\ra$ and $\la v'\ra^{s}\le \la v\ra^s\la v_*\ra^s $ it follows that for any $t\in [0,\infty)$
  \beas
  \|Q_K^{+}(f)(t,\cdot)\|_{L_s^1}
  \le (1+2K)b|\bS| \|f(t)\|_{L^1_{s+1}}^2,
  \quad
  \|Q_K^{-}(f)(t,\cdot)\|_{L_s^1}
  \le (1+2K)b |\bS|\|f(t)\|_{L^1_1}\|f(t)\|_{L^1_{s+1}}.
  \eeas
   Thus for any $s\ge 3$ and any $0<T<\infty$ we have
   $\sup\limits_{0\le t\le T}\|Q_K^{\pm}(f)(t)\|_{L_s^1}< \infty$. Then it follows from Fubini's theorem that
  \beqa\label{integrabliity derivitive}
  \|f(t)\|_{L_s^1}=\|f_0\|_{L_s^1}+\int_0^t \dd \tau\int_{\R^3}\la v\ra^s Q_K(f)(\tau,v)\dd v \quad \forall\, t\in [0,\infty).
  \eeqa
 From (\ref{integrabliity derivitive}) with $s=3$  and Corollary \ref{LemmaE}  we have for a.e. $t\in [0,\infty),$
  \beqa \label{I}
  &&\frac{\dd}{\dd t}\|f(t)\|_{L_3^1}\no\\
  &&
  =\frac{1}{2}\iiint_{\R^3\times\R^3\times\SP^2}
  \big(\la v'\ra ^3+\la v_*'\ra ^3-\la v\ra ^3-\la v_*\ra ^3\big)
  B(v-v_*,\si)ff_{*}(1+ f'\land K+ f'_{*}\land K)\dd \si\dd v_{*} \dd v
  \nonumber \\
  &&\le
  4(\sqrt{2}-1)(1+2K)\iiint_{\R^3\times\R^3\times\SP^2} B(v-v_*,\si)\la v\ra ^2\la v_*\ra ff_*
  \dd \si \dd v_* \dd v
  \nonumber \\
  &&\ \ \ \ -\frac{1}{32}\iiint_{\R^3\times\R^3\times\SP^2} B(v-v_*,\si) ff_*(1+ f'\land K+ f'_{*}\land K)[\kappa(\theta)]^{\fr{3}{2}}\la v\ra ^3\dd \si \dd v_* \dd v
  \nonumber \\
  &&:=4(\sqrt{2}-1)(1+2K)I_1-\frac{1}{32}I_2
  \eeqa
  where $\kappa(\theta)=\min\{(1-\cos^2(\theta/2)),\,(1-\sin^2(\theta/2))\}$.
  For $I_1$ and $I_2$ we compute
  \beqa \label{I1}
  I_1&\le&
  b\inttts
  \la v\ra ^3\la v_*\ra^2 ff_*\dd \si \dd v_* \dd v
  =b |\bS|\|f_0\|_{L_2^1}\|f(t)\|_{L_3^1},\\
  I_2&\ge&\label{I2}
  \inttts B(v-v_*,\si) ff_*(1+ f'\land K+ f'_{*}\land K)[\kappa(\theta)]^{\fr{3}{2}}\la v\ra ^3\dd \si \dd v_* \dd v
  \no\\
  &\ge&
  a\int_{\bS}[\kappa(\theta)]^{\fr{3}{2}}\dd \si\intt_{\bRR}
  \fr{|v-v_*|^{\beta+1}}{1+|v-v_*|^{\beta}} ff_*\la v\ra ^3 \dd v_* \dd v
  \no\\
  &\ge&
  a C_0\intt_{\bRR}
  \Big(|v-v_*|-\fr{|v-v_*|}{1+|v-v_*|^{\beta}}\Big)
  ff_*\la v\ra ^3 \dd v_* \dd v
  \no\\
  &\ge&
  a C_0
  (\|f_0\|_{L^1}\|f(t)\|_{L_4^1}-\|f(t)\|_{L_1^1}\|f(t)\|_{L_3^1}
  -\|f_0\|_{L^1}\|f(t)\|_{L_3^1})
  \no\\
  &\ge&
  a C_0
  \Big(\|f_0\|_{L^1}\fr{\|f(t)\|_{L_3^1}^2}{\|f_0\|_{L_2^1}}-2\|f_0\|_{L_2^1}
 \|f(t)\|_{L_3^1}\Big)
  \eeqa
  where we used the conservation of mass and energy of $f$ and $C_0=\int_{\bS}[\kappa(\theta)]^{\fr{3}{2}}\dd \si=\fr{22\sqrt{2}-31}{5}\pi.$
 Combining (\ref{I}), (\ref{I1}) and (\ref{I2}) we obtain for a.e. $t\in [0,\infty),$
  \beas
  \frac{\dd}{\dd t}\|f(t)\|_{L_3^1}\le
  -c_1\|f(t)\|_{L_3^1}^2
  +c_2\|f(t)\|_{L_3^1}
  \eeas
   where
\beas&&c_1=\fr{ (\fr{22}{5}\sqrt{2}-\fr{31}{5})\pi a\|f_0\|_{L^1}}{32\|f_0\|_{L_2^1}},\quad
  c_2=\big(4(\sqrt{2}-1)(1+2K)b|\bS|
  +\frac{1}{16} \big(\fr{22}{5}\sqrt{2}-\fr{31}{5}\big)\pi a\big)\|f_0\|_{L_2^1}.\eeas
  Here we note that by conservation of mass we have $\|f(t)\|_{L^1_s}\ge \|f_0\|_{L^1}>0$
and from (\ref{L1s}) we know that $\fr{1}{\|f(t)\|_{L^1_s}}$ is absolutely continuous on
any bounded interval.  It follows that $t\mapsto \fr{\exp(c_2 t)}{\|f(t)\|_{L_s^1}}$ is also absolutely continuous on
any bounded interval. Thus
\beas
\fr{\dd}{\dd t}\Big(\fr{\exp(c_2 t)}{\|f(t)\|_{L_s^1}}\Big)\ge c_1\exp(c_2 t) \qquad  {\rm a.e.}\quad t\in [0, \infty).
\eeas
  Solving this differential inequality and noticing that $\|f(0)\|_{L_3^1}=\|f_0\|_{L_3^1}$
  we conclude
\beas&&\|f(t)\|_{L_3^1}
  \le\fr{c_2\exp(c_2t)}{c_2+c_1\|f_0\|_{L_3^1}(\exp(c_2 t)-1)}\|f_0\|_{L_3^1}
  \le \max\{1,C_1\}\|f_0\|_{L_3^1}\qquad\forall\, t\ge 0
  \eeas
  where
 $C_1=c_2/(c_1\|f_0\|_{L_3^1})$ is defined in (\ref{C1 def}).

Now for general case let $f_0$ be given in Theorem \ref{main results}. Consider approximation  $f^n_0(v)=f_0(v)e^{-\fr{|v|^2}{n}}(n\in \N)$ and let $f^n$ be the conservative mild solutions of Eq.(\ref{approximate equation 2}) obtained in Proposition {\ref{mild solution f}} with
the initial data $f_0^n$. Since $f^n_0\in \bigcap_{s\ge 3} L_s^1(\R^3)$, we have proved in above that $\|f^n(t)\|_{L^1_3}, \|f^n_0\|_{L^1_3}$ satisfy (\ref{L3 estimate}), (\ref{C1 def}). Note that in the inequality (\ref{L3 estimate}) applied for $f^n, f^n_0$ the coefficient $C_1$ depends only and continuously on $(\|f^n_0\|_{L^1},\|f^n_0\|_{L_2^1},K)$.
It follows from (\ref{mono}) and Proposition \ref{stable} that
\beas
\|f^n(t)-f(t)\|_{L_2^1}\le C_{T,K,n}\|f^n_0-f_0\|_{L^1_2}\le C_{T,K}\|f^n_0-f_0\|_{L^1_2}\to 0\quad (n\to\infty)\quad \forall\, t\in [0, T]
\eeas
where $C_{T, K, n}=C_{T,K}
 (\|f_0\|_{L^{\infty}_3},\|f_0\|_{L^1_3},
 \|f_0^n\|_{L^{\infty}_3},\|f_0^n\|_{L^1_3})\le C_{T, K}=C_{T,K}
 (\|f_0\|_{L^{\infty}_3},\|f_0\|_{L^1_3},
 \|f_0\|_{L^{\infty}_3},\|f_0\|_{L^1_3})$ which is because $0\le f^n_0\le f_0$ and
$C_{T,K}(y_1,y_2,y_3,y_4)$ is monotone non-decreasing with respect to each
$y_1,y_2,y_3,y_4\in[0,\infty)$.

Since $\|f(t)\|_{L^1_3}\le \liminf\limits_{n\to\infty}\|f^n(t)\|_{L^1_3}$ (by Fatou's lemma)
and $\|f^n_0\|_{L^1_k}\to \|f_0\|_{L^1_k}\,(n\to\infty)$ for $0\le k\le 3,$
 it follows by taking $n\to\infty$ to (\ref{L3 estimate}), (\ref{C1 def}) for $f^n$
 that $f$ also satisfies (\ref{L3 estimate}), (\ref{C1 def}).
 \eepf
  \vskip3mm

  \begin{proposition}\label{L estimate}
Let $L_{K}(\cdot)$, $Q_{K}(\cdot)$ be defined in $(\ref{Lft})$, $(\ref{QK})$ respectively with  $B$ satisfying $(\ref{condition of kernel})$. For any $0\le f_0\in L^1_3(\R^3)\cap L_3^{\infty}(\R^3)$ satisfying $\int_{{\R}^3} v f_0(v){\rm d}v=0$, let $f_t(v)\equiv f(t,v)$ with the initial datum $f_0$ be a mild solution of Eq.$(\ref{approximate equation 2})$ obtained in Proposition \ref{mild solution f} corresponding to the kernel $B$. Then we have
  \beqa\label{L(ft)(v)}
   L_{K}(f_t)(v)\ge a
   \fr{(\min\{M_0, M_2\})^{\fr{\beta+1}{2}}}
   {2^{\beta}\|f_t\|_{L_{3}^1}^{\fr{\beta-1}{2}}}\langle v\rangle
    \qquad \forall\, (t,v)\in [0,\infty)\times\R^3
  \eeqa
  where
   $
   M_0=\int_{\R^3}f_0(v) \dd v,\,
   M_2=\int_{\R^3}|v|^2f_0(v) \dd v
   $.
  \end{proposition}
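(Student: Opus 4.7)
The plan is to exploit the zero momentum hypothesis to obtain a sharp pointwise lower bound on the second moment $\int|v-v_*|^2 f_t\,\dd v_*$ and then transfer it, via a single H\"older inequality, to the weighted integral $\int|v-v_*|^{\beta+1}(1+|v-v_*|^\beta)^{-1} f_t\,\dd v_*$. The H\"older exponents will be dictated by matching the target powers of $m:=\min\{M_0,M_2\}$ and $\|f_t\|_{L^1_3}$.

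The starting point combines the hypothesis $\int v f_0\,\dd v=0$ with the conservation of momentum enjoyed by $f$ (Proposition \ref{mild solution f}), giving $\int v_* f_t(v_*)\,\dd v_*=0$ for all $t\ge 0$. Together with the conservation of mass and energy, expanding $|v-v_*|^2=|v|^2-2\langle v,v_*\rangle+|v_*|^2$ then yields
\begin{equation*}
\int_{\R^3}|v-v_*|^2 f_t(v_*)\,\dd v_* \;=\; M_0|v|^2+M_2 \;\ge\; \min\{M_0,M_2\}\langle v\rangle^2 \;=\; m\langle v\rangle^2.
\end{equation*}

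Next I apply H\"older's inequality with conjugate exponents $p=(\beta+1)/2$ and $q=(\beta+1)/(\beta-1)$, factoring $|v-v_*|^2=X\cdot Y$ with $X=\big(|v-v_*|^{\beta+1}/(1+|v-v_*|^\beta)\big)^{2/(\beta+1)}$ and $Y=(1+|v-v_*|^\beta)^{2/(\beta+1)}$, arranged precisely so that $X^p$ reproduces the integrand we want to estimate from below and $Y^q=(1+|v-v_*|^\beta)^{2/(\beta-1)}$. The second resulting factor is controlled by the elementary inequalities $\langle v-v_*\rangle\le\sqrt{2}\langle v\rangle\langle v_*\rangle$ and $1+|v-v_*|^\beta\le 2\langle v-v_*\rangle^\beta$, which combine to give $1+|v-v_*|^\beta\le 2^{\beta}\langle v\rangle^\beta\langle v_*\rangle^\beta$ for $\beta\ge 2$. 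Since $\beta\ge 3$ forces $2\beta/(\beta-1)\le 3$, one obtains
\begin{equation*}
\int(1+|v-v_*|^\beta)^{2/(\beta-1)} f_t\,\dd v_* \;\le\; 2^{2\beta/(\beta-1)}\langle v\rangle^{2\beta/(\beta-1)}\|f_t\|_{L^1_3}.
\end{equation*}

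Raising the H\"older inequality to the power $(\beta+1)/2$ and rearranging, the $\langle v\rangle$-powers conspire exactly: $\langle v\rangle^{\beta+1}$ on one side divided by $(\langle v\rangle^{2\beta/(\beta-1)})^{(\beta-1)/2}=\langle v\rangle^\beta$ on the other leaves a net $\langle v\rangle^1$, producing the pointwise bound
\begin{equation*}
\int \fr{|v-v_*|^{\beta+1}}{1+|v-v_*|^\beta}\,f_t\,\dd v_* \;\ge\; \fr{m^{(\beta+1)/2}}{2^{\beta}\|f_t\|_{L^1_3}^{(\beta-1)/2}}\,\langle v\rangle.
\end{equation*}
The claim then follows upon noting that $1+f'\wedge K+f_*'\wedge K\ge 1$ and the lower bound in (\ref{condition of kernel}) give $L_K(f_t)(v)\ge a\iint|v-v_*|^{\beta+1}(1+|v-v_*|^\beta)^{-1}f_t\,\dd\sigma\,\dd v_*=4\pi a$ times the above integral, so there is in fact a factor of $4\pi$ to spare. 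The only delicate point is the choice of H\"older exponents; once the exponent $p=(\beta+1)/2$ is identified as forced by the target power of $m$, the rest of the scaling is automatic.
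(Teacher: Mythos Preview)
Your proof is correct and follows essentially the same route as the paper: both use the conservation of mass, momentum, and energy to get $\int|v-v_*|^2 f_t\,\dd v_*=M_0|v|^2+M_2\ge m\langle v\rangle^2$, then apply H\"older with the same exponents $p=(\beta+1)/2$, $q=(\beta+1)/(\beta-1)$, bound the complementary factor via $(1+|v-v_*|^\beta)^{2/(\beta-1)}\le 2^{2\beta/(\beta-1)}\langle v\rangle^{2\beta/(\beta-1)}\langle v_*\rangle^{2\beta/(\beta-1)}$ together with $2\beta/(\beta-1)\le 3$, and finally invoke the lower bound on $B$. Your observation that a factor $|\mathbb{S}^2|=4\pi$ is thrown away is also correct; the paper does the same without comment.
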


  \bepf Using H\"{o}lder's inequality we have for any $v\in \R^3$
   \beas
   &&|v|^2M_0+M_2=\int_{\R^3}|v-v_*|^2f_t(v_*)\dd v_*
   \\
   &&\le
   \Big(\int_{\R^3}\fr{|v-v_*|}{1+|v-v_*|^{-\beta}}f_t(v_*)\dd v_*\Big)^{\fr{2}{\beta+1}}
   \Big(\int_{\R^3}|v-v_*|^{\fr{2\beta}{\beta-1}}(1+|v-v_*|^{-\beta})^{\fr{2}{\beta-1}}f_t(v_*)\dd v_*\Big)^{\fr{\beta-1}{\beta+1}}
   \\
   &&\le
   \Big(\int_{\R^3}\fr{|v-v_*|^{\beta+1}}{1+|v-v_*|^{\beta}}f_t(v_*)\dd v_*\Big)^{\fr{2}{\beta+1}}
   \Big(\int_{\R^3}(1+|v-v_*|^{\fr{2\beta}{\beta-1}})f_t(v_*)\dd v_*\Big)^{\fr{\beta-1}{\beta+1}}.
   \eeas
   From this inequality and the definition of $L_{K}(f_t)(v)$ we deduce that
   for any $(t,v)\in [0,\infty)\times \R^3$
   \beas
   L_{K}(f_t)(v) & = &\iint_{\R^3\times\SP^2}B(v-v_*,\si)f_t(v_*)(1+ f_t(v')\land K+ f_t(v_*')\land K)\dd \si\dd v_{*}
   \\
   &\ge & a
   \int_{\R^3}\fr{|v-v_*|^{\beta+1}}{1+|v-v_*|^{\beta}}f_t(v_*)\dd v_*
   \ge a\fr{(\int_{\R^3}|v-v_*|^2f_t(v_*)\dd v_*)^{\fr{\beta+1}{2}}}
   {\big(\int_{\R^3}(1+|v-v_*|^{\fr{2\beta}{\beta-1}})f_t(v_*)\dd v_*\big)^{\fr{\beta-1}{2}}}
   \\
   &\ge& a\fr{(\min\{M_0, M_2\}(|v|^2+1))^{\fr{\beta+1}{2}}}
   {2^{\beta}\|f_t\|_{L_{\fr{2\beta}{\beta-1}}^1}^{\fr{\beta-1}{2}}\langle v\rangle^{\beta}}
   \ge a
   \fr{(\min\{M_0, M_2\})^{\fr{\beta+1}{2}}}
   {2^{\beta}\|f_t\|_{L_{3}^1}^{\fr{\beta-1}{2}}}\langle v\rangle
\eeas
where we used the assumption $\beta \ge 3$ and the conservation of mass, momentum ($\int_{{\R}^3} v f_0(v){\rm d}v=0$) and energy of $f$. \eepf
\vskip3mm

  \begin{proposition}\label{Duhamel}
  Let $Q^{+}(\cdot,\cdot)$, $Q_{K}(\cdot)$ be defined in $(\ref{Q+ bilinear})$, $(\ref{QK})$ respectively with  $B$ satisfying $(\ref{condition of kernel})$.
  For any $0\le f_0\in L^1_3(\R^3)\cap L_3^{\infty}(\R^3)$ satisfying $\int_{{\R}^3} v f_0(v){\rm d}v=0$, let $f_t(v)\equiv f(t,v)$ be a mild solution of Eq.$(\ref{approximate equation 2})$ obtained in Proposition \ref{mild solution f} corresponding to the kernel $B$ with the initial datum $f_0$. Let
  \beas E^{t}_{\tau}(v)&=& e^{-\int_{\tau}^tL_{K}(f_{\tau_1})(v)\dd \tau_1},\quad 0\le \tau\le t<\infty,\quad v\in \R^3,
  \\
  {\wt{E}}_{\tau}^{t}(v)&=&\exp
  \Big(-a
  \fr{(\min\{M_0, M_2\})^{\fr{\beta+1}{2}}}
   {2^{\beta}{C_2}^{\fr{\beta-1}{2}}}
   (t-\tau)\la v\ra\Big),\quad 0\le \tau\le t<\infty,\quad v\in \R^3,
   \\
   \overline{E}_\tau^t&=&\exp\Big(-a
  \fr{(\min\{M_0, M_2\})^{\fr{\beta+1}{2}}}
   {2^{\beta}{C_2}^{\fr{\beta-1}{2}}}(t-\tau)\Big),
   \quad 0\le \tau\le t<\infty, \quad v\in \R^3
   \eeas
  where
  $
   M_0=\int_{\R^3}f_0(v) \dd v,\,
   M_2=\int_{\R^3}|v|^2f_0(v) \dd v,$
   and $C_2=\max\{1,C_1\}\|f_0\|_{L_3^1}$ is the right hand side of $(\ref{L3 estimate})$ in Proposition \ref{L3}. Then for all $(t,v)\in [0,\infty)\times(\R^3\setminus Z)$
$($where $Z\subset \R^3$ is a null set as mentioned in Proposition \ref{mild solution f}$)$
   we have
  \beqa\label{f(t,v)}
  &&f_t(v)
  \le
  E_0^{t}(v)f_0(v)+(1+2K)\int_0^t {\wt{E}}_{\tau}^{t}(v)Q^+(f_\tau,f_\tau)(v)\dd \tau
  \\\label{f(t,v) 2}
  &&\,\,\,\quad \quad \le
  E_0^{t}(v)f_0(v)+(1+2K)\int_0^t
  \overline{E}_{\tau}^{t}Q^+(f_\tau,f_\tau)(v)\dd \tau,
  \eeqa and
  \beqa\label{E1}
  && E^{t}_{\tau}(v)\le  \wt{E}^{t}_{\tau}(v)
  \le \overline{E}_\tau^t
  ,\quad  0\le \tau\le t<\infty, \, v\in {\R}^3.\eeqa
  In particular
\beqa\label{E2} E_0^{t}(v)
  \le\overline{E}_0^t= \exp\Big(-a
  \fr{(\min\{M_0, M_2\})^{\fr{\beta+1}{2}}}
   {2^{\beta}{C_2}^{\fr{\beta-1}{2}}} t\Big),
   \quad (t,v)\in [0,\infty)\times{\R}^3.
  \eeqa
  \end{proposition}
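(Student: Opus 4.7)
The plan is to derive a Duhamel representation for $f$ from its mild solution formulation, and then bound the resulting exponential factor via the lower bound on $L_{K}(f_\tau)(v)$ provided by Proposition \ref{L estimate} together with the $L^1_3$ moment control of Proposition \ref{L3}.

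First I fix $v\in{\R}^3\setminus Z$ and observe that the mild solution identity of Proposition \ref{mild solution f} plus the bounds
$$Q^{\pm}_K(f)(t,v)\le (1+2K)\,b\iint_{{\bRS}} |v-v_*|(f'f_*'+ff_*)\,\dd\sg\dd v_*\le C(v)\,\bigl(1+\|f_0\|^2_{L^1_2}\bigr),$$
where one uses Proposition \ref{proposition2} and conservation of mass and energy, imply that $t\mapsto f(t,v)$ is absolutely continuous on every $[0,T]$ with derivative $Q_K(f)(t,v)=Q^{+}_K(f)(t,v)-f(t,v)L_K(f_t)(v)$ for a.e.\ $t$. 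A similar estimate shows that, for fixed $v$, $t\mapsto L_K(f_t)(v)$ is bounded on $[0,T]$, so $t\mapsto \int_0^t L_K(f_{\tau})(v)\dd\tau$ is Lipschitz, hence the weight $W(t,v):=\exp\bigl(\int_0^t L_K(f_\tau)(v)\dd\tau\bigr)$ is absolutely continuous in $t$.

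Next I apply Lemma \ref{Lemma7} to the product $g(t):=f(t,v)W(t,v)$, which is absolutely continuous in $t$, to obtain
$$\tfrac{\dd}{\dd t}\bigl(f(t,v)W(t,v)\bigr)=\bigl[Q^{+}_K(f)(t,v)-f(t,v)L_K(f_t)(v)\bigr]W(t,v)+f(t,v)L_K(f_t)(v)W(t,v)=Q^{+}_K(f)(t,v)W(t,v)$$
for a.e.\ $t\in[0,T]$. The key estimate
$$Q^{+}_K(f)(t,v)\le(1+2K)Q^{+}(f_t,f_t)(v),$$
which follows directly from $f\wedge K\le K$ in the definition (\ref{QK+}), converts this into a differential inequality. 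Integrating from $0$ to $t$ and dividing by $W(t,v)$ yields
$$f(t,v)\le f_0(v)E_0^{t}(v)+(1+2K)\int_0^t \frac{W(\tau,v)}{W(t,v)}\,Q^{+}(f_\tau,f_\tau)(v)\,\dd\tau=f_0(v)E_0^{t}(v)+(1+2K)\int_0^t E_\tau^{t}(v)Q^{+}(f_\tau,f_\tau)(v)\,\dd\tau.$$

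To upgrade $E_\tau^{t}(v)$ to $\wt{E}_\tau^{t}(v)$ and then to $\overline{E}_\tau^{t}$, I invoke Proposition \ref{L estimate}, which gives $L_K(f_{\tau_1})(v)\ge a\,(\min\{M_0,M_2\})^{(\beta+1)/2}\,2^{-\beta}\|f_{\tau_1}\|_{L^1_3}^{-(\beta-1)/2}\la v\ra$, and then use Proposition \ref{L3} to replace $\|f_{\tau_1}\|_{L^1_3}$ by its upper bound $C_2$. Integrating this lower bound in $\tau_1$ from $\tau$ to $t$ and taking exponentials of the negative yields $E_\tau^{t}(v)\le\wt{E}_\tau^{t}(v)$, and the trivial estimate $\la v\ra\ge 1$ gives $\wt{E}_\tau^{t}(v)\le\overline{E}_\tau^{t}$. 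Substituting these bounds into the Duhamel inequality above delivers (\ref{f(t,v)})--(\ref{f(t,v) 2}), (\ref{E1}), and (\ref{E2}) as consequences.

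The main technical obstacle is the first paragraph: showing that for each fixed $v$ outside the null set $Z$, both $f(t,v)$ and the integrating factor $W(t,v)$ are genuinely absolutely continuous in $t$, so that Lemma \ref{Lemma7} applies and the chain rule computation producing $\tfrac{\dd}{\dd t}(fW)=Q^{+}_K(f)W$ is rigorous. Once this is in place, the rest of the argument is a clean Duhamel formula followed by plugging in the two preceding propositions.
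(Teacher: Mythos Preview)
Your proposal is correct and follows essentially the same route as the paper: derive Duhamel's formula by multiplying through by the integrating factor $W(t,v)=\exp\bigl(\int_0^t L_K(f_\tau)(v)\,\dd\tau\bigr)$, using absolute continuity in $t$ for fixed $v\in\R^3\setminus Z$, and then bound $E^t_\tau(v)$ via Propositions \ref{L estimate} and \ref{L3}. The only cosmetic difference is that the paper writes the Duhamel identity as an equality first and then invokes $Q_K^+(f)\le(1+2K)Q^+(f,f)$ together with (\ref{E1}), whereas you fold the inequality in slightly earlier; also, the product rule for two absolutely continuous functions is what you need here rather than Lemma \ref{Lemma7} per se, but this is standard.
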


  \bepf Inequalities (\ref{E1}) and (\ref{E2}) follow from Propositions \ref{L3} and \ref{L estimate}.
For every $v\in {\R}^3\setminus Z$, the function
$t\mapsto L_{K}(f_t)(v)$ is bounded on $[0,\infty)$ so that $t\mapsto \int_0^t L_{K}(f_\tau)(v)\dd \tau$
is Lipschitz continuous hence $t\mapsto e^{\int_0^t L_{K}(f_\tau)(v)\dd \tau}$
is Lipschitz on every bounded interval, it follows that
the function $t\mapsto e^{\int_0^t L_{K}(f_\tau)(v)\dd \tau}f(t,v)$ is absolutely continuous on
every bounded interval. Thus it holds Duhamel's formula:
$$f_t(v)e^{\int_0^t L_{K}(f_\tau)(v)\dd \tau}=f_0(v)+\int_0^t Q_K^+(f_\tau)(v)e^{\int_0^\tau L_{K}(f_{\tau_1})(v)\dd \tau_1}\dd \tau,
  \quad (t,v)\in [0,\infty)\times(\R^3\setminus Z)$$
which is rewritten with $E_0^{t}(v), E_{\tau}^{t}(v)$  as
\beqa\label{E3}f_t(v)=E_0^{t}(v)f_0(v)+\int_0^t E_{\tau}^{t}(v) Q_K^{+}(f_\tau)(v)\dd \tau, \quad (t,v)\in [0,\infty)\times(\R^3\setminus Z).\eeqa
Then (\ref{f(t,v)}), (\ref{f(t,v) 2}) follow from (\ref{E3}) and (\ref{E1}).
\eepf
  \vskip3mm

  From (\ref{f(t,v)}) we see that a uniform pointwise estimate for $f$ can be obtained from
  an appropriate pointwise estimate for $Q^+(f,f)$.  We will establish such an estimate for $Q^+(f,f)$ in Proposition \ref{continuity}. It will be seen that a successful method to do this is to use multi-step iterations of the collision gain operator $Q^{+}(\cdot,\cdot)$, which we put into Propositions \ref{estimate 3}$-$\ref{estimate 2}.

  \begin{proposition}\label{estimate 3}
Let $Q^{+}(\cdot,\cdot)$, $Q_{K}(\cdot)$ be defined in $(\ref{Q+ bilinear})$, $(\ref{QK})$ respectively with  $B$ satisfying $(\ref{condition of kernel})$. For any $0\le f_0\in L^1_3(\R^3)\cap L_3^{\infty}(\R^3)$ satisfying $\int_{{\R}^3} v f_0(v){\rm d}v=0$, let $f_t(v)\equiv f(t,v)$ be the conservative mild solution of Eq.$(\ref{approximate equation 2})$ obtained in Proposition \ref{mild solution f} corresponding to the kernel $B$ with the initial datum $f_0$.
Then for all $t\in [0,\infty), v\in \R^3\setminus Z$
 \beqa\label{Q+f0ft}
  Q^+(f_t,f_0)(v)
  &\le &
 2^5b\pi\|f_0\|_{L^{\infty}}\|f_0\|_{L_1^1}\la v\ra
 \no\\
 &+&(1+2K)\cdot 2^{5+\fr{2}{3}}b^2\pi^{\fr{4}{3}}
   \|f_0\|_{L^1}^{\fr{8}{3}}\|f_0\|_{L_2^1}^{\fr{1}{3}}
   \fr{2^{\beta}{C_2}^{\fr{\beta-1}{2}}}
  {a(\min\{M_0, M_2\})^{\fr{\beta+1}{2}}}
 \eeqa
 where $Z\subset \R^3$ is a null set given in Proposition \ref{mild solution f},
  $
   M_0=\int_{\R^3}f_0(v) \dd v,\,
   M_2=\int_{\R^3}|v|^2f_0(v) \dd v,$ and
    $C_2=\max\{1,C_1\}\|f_0\|_{L_3^1}$ is the right hand side of $(\ref{L3 estimate})$ in Proposition \ref{L3}.

 \end{proposition}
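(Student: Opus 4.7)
The strategy is to use the Duhamel-type upper bound for $f_t$ from Proposition \ref{Duhamel}, substitute it into the first slot of the bilinear operator $Q^{+}(f_t, f_0)$, and then combine the elementary pointwise bound of Proposition \ref{proposition 2.3} with the two-step iteration bound of Proposition \ref{most important}(a).

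First I would start from inequality (\ref{f(t,v) 2}) of Proposition \ref{Duhamel}. Since $E_0^{t}(v)\le 1$ we have, for $(t,v)\in[0,\infty)\times(\R^3\setminus Z)$,
\begin{eqnarray*}
f_t(v)\le f_0(v)+(1+2K)\int_0^{t}\overline{E}_\tau^{t}\, Q^{+}(f_\tau,f_\tau)(v)\, d\tau.
\end{eqnarray*}
Plugging this estimate into the first argument of $Q^{+}(f_t,f_0)(v)$ and using bilinearity gives
\begin{eqnarray*}
Q^{+}(f_t,f_0)(v)\le Q^{+}(f_0,f_0)(v)+(1+2K)\int_0^{t}\overline{E}_\tau^{t}\, Q^{+}\bigl(Q^{+}(f_\tau,f_\tau),f_0\bigr)(v)\, d\tau.
\end{eqnarray*}

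The first term is immediately bounded by $2^{5}b\pi\|f_0\|_{L^{\infty}}\|f_0\|_{L^1_1}\la v\ra$ via inequality (\ref{Q1}) of Proposition \ref{proposition 2.3} (recalling $8b|\bS|=32b\pi=2^{5}b\pi$), which produces the first summand on the right of (\ref{Q+f0ft}). For the integrand of the second term, I would use the symmetry identity (\ref{exchange}) to rewrite $Q^{+}(Q^{+}(f_\tau,f_\tau),f_0)=Q^{+}(f_0,Q^{+}(f_\tau,f_\tau))$, and then apply Proposition \ref{most important}(a) with the assignment $f=f_0$, $g=h=f_\tau$. This yields the uniform-in-$v$ bound
\begin{eqnarray*}
Q^{+}\bigl(f_0,Q^{+}(f_\tau,f_\tau)\bigr)(v)\le 2^{5+2/3}\pi^{4/3}b^2\|f_0\|_{L^1}^{1/3}\|f_0\|_{L^2}^{2/3}\|f_\tau\|_{L^1}^{2}.
\end{eqnarray*}
Using the conservation of mass $\|f_\tau\|_{L^1}=\|f_0\|_{L^1}$ (which is built into the mild solution from Proposition \ref{mild solution f}) and a standard interpolation converting $\|f_0\|_{L^1}^{1/3}\|f_0\|_{L^2}^{2/3}$ into the combination $\|f_0\|_{L^1}^{2/3}\|f_0\|_{L^1_2}^{1/3}$ produces the claimed polynomial dependence $\|f_0\|_{L^1}^{8/3}\|f_0\|_{L^1_2}^{1/3}$.

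Finally, the decay factor $\overline{E}_\tau^{t}$ is a pure exponential in $t-\tau$ with rate $\lambda:=a(\min\{M_0,M_2\})^{(\beta+1)/2}/(2^{\beta}C_2^{(\beta-1)/2})$, so
\begin{eqnarray*}
\int_0^{t}\overline{E}_\tau^{t}\, d\tau\le \frac{1}{\lambda}=\frac{2^{\beta}C_2^{(\beta-1)/2}}{a(\min\{M_0,M_2\})^{(\beta+1)/2}},
\end{eqnarray*}
where $C_2$ is exactly the right-hand side of (\ref{L3 estimate}) coming from Proposition \ref{L3}. Multiplying everything together produces the second summand in (\ref{Q+f0ft}).

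The main obstacle is the conversion at the end of the second paragraph: Proposition \ref{most important}(a) naturally delivers an $L^2$-norm factor on $f_0$, whereas the target estimate is phrased in terms of $L^1$ and $L^1_2$. Handling this cleanly requires exploiting either the $L^{\infty}$ hypothesis on $f_0$ through $\|f_0\|_{L^2}^{2}\le\|f_0\|_{L^{\infty}}\|f_0\|_{L^1}$ or a weighted interpolation involving $\la v\ra^{2}$; the rest of the argument is a straightforward bilinear substitution followed by bookkeeping of constants.
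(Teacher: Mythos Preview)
Your approach is correct and essentially identical to the paper's: Duhamel bound (\ref{f(t,v) 2}) in the first slot, then (\ref{Q1}) for $Q^{+}(f_0,f_0)$, symmetry (\ref{exchange}) plus Proposition~\ref{most important}(a) for the iterated term, mass conservation, and finally $\int_0^t\overline{E}_\tau^t\,d\tau\le 1/\lambda$.

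On the one point you flag as an obstacle: the paper does \emph{not} use any weighted interpolation to reach $\|f_0\|_{L^1_2}^{1/3}$. It uses exactly the elementary bound you mention first, $\|f_0\|_{L^2}\le\|f_0\|_{L^\infty}^{1/2}\|f_0\|_{L^1}^{1/2}$, which turns $\|f_0\|_{L^1}^{7/3}\|f_0\|_{L^2}^{2/3}$ into $\|f_0\|_{L^1}^{8/3}\|f_0\|_{L^\infty}^{1/3}$. The factor $\|f_0\|_{L_2^1}^{1/3}$ printed in (\ref{Q+f0ft}) appears to be a typo for $\|f_0\|_{L^\infty}^{1/3}$; this is consistent both with the paper's own proof and with how the bound is quoted downstream in Proposition~\ref{continuity}. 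So you should not look for a genuine $L^2\to L^1_2$ interpolation here---just use the $L^\infty$--$L^1$ splitting and move on.
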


 \bepf From (\ref{f(t,v) 2}), (\ref{exchange}), (\ref{Q1}) and (\ref{Q+(ft,Q+(fs,fs))}) we have for all $(t,v)\in [0,\infty)\times (\R^3\setminus Z)$
   \beas
 Q^+(f_t,f_0)(v)
 & \le & Q^+(f_0,f_0)(v)+(1+2K)\int_0^t \overline{E}_{\tau}^{t} Q^+(Q^+(f_\tau,f_\tau),f_0)(v)\dd \tau
   \no\\
   &\le &
   8b|\bS|\|f_0\|_{L^{\infty}}\|f_0\|_{L_1^1}\la v\ra
   +(1+2K)\int_0^t \overline{E}_{\tau}^{t} 16b^2|\mathbb S^1||\bS|^{\fr{1}{3}}\|f_\tau\|^2_{L^1}\|f_0\|^{\fr{1}{3}}_{L^1}
  \|f_0\|^{\fr{2}{3}}_{L^2}\dd \tau
   \no\\
   &\le &
   8b|\bS|\|f_0\|_{L^{\infty}}\|f_0\|_{L_1^1}\la v\ra
   +(1+2K)16b^2|\mathbb S^1||\bS|^{\fr{1}{3}}
   \|f_0\|_{L^1}^{\fr{7}{3}}\|f_0\|_{L^2}^{\fr{2}{3}}\int_0^t \overline{E}_{\tau}^{t}\dd \tau
   \no\\
   &\le &
   8b|\bS|\|f_0\|_{L^{\infty}}\|f_0\|_{L_1^1}\la v\ra
   +(1+2K)16b^2|\mathbb S^1||\bS|^{\fr{1}{3}}
   \|f_0\|_{L^1}^{\fr{8}{3}}\|f_0\|_{L^1_2}^{\fr{1}{3}}
   \fr{2^{\beta}{C_2}^{\fr{\beta-1}{2}}}
  {a(\min\{M_0, M_2\})^{\fr{\beta+1}{2}}}
   \eeas
   where we used the conservation of mass and energy of $f$ and $\|f_0\|_{L^2}\le \|f_0\|_{L^{\infty}}^{\fr{1}{2}}\|f_0\|_{L^1}^{\fr{1}{2}}$.
   \eepf
\vskip3mm

   \begin{proposition}\label{estimate 1}
 Let $Q^{+}(\cdot,\cdot)$, $Q_{K}(\cdot)$ be defined in $(\ref{Q+ bilinear})$, $(\ref{QK})$ respectively with  $B$ satisfying $(\ref{condition of kernel})$. For any $0\le f_0\in L^1_3(\R^3)\cap L_3^{\infty}(\R^3)$ satisfying $\int_{{\R}^3} v f_0(v){\rm d}v=0$, let $f_t(v)\equiv f(t,v)$ be a conservative  mild solution of Eq.$(\ref{approximate equation 2})$ obtained in Proposition \ref{mild solution f} corresponding to the kernel $B$ with the initial datum $f_0$.
  Then for all $t,\tau \in [0,\infty)$ and $v\in \R^3\setminus Z$
  \beqa\label{Q+(Q+(ft,ft),Q+(fs,fs))}
  Q^+\big(Q^+(f_t,f_t),Q^+(f_\tau,f_\tau)\big)(v)
  &\le&
  2^{8+\fr{1}{2}}b^3\pi^{\fr{7}{3}}\|f_0\|_{L^1}^{\fr{7}{3}}\|f_0\|_{L_2^1}^{\fr{4}{3}}
 \|f_0\|_{L^{\infty}}^{\fr{1}{3}}\no\\
  &+ & (1+2K)2^{12}b^4\pi^{3}
  \|f_0\|_{L^1}^{\fr{23}{6}}\|f_0\|_{L_2^1}^{\fr{7}{6}}
  \fr{2^{\beta}{C_2}^{\fr{\beta-1}{2}}}
  {a(\min\{M_0, M_2\})^{\fr{\beta+1}{2}}}\qquad
  \eeqa
  where $Z\subset \R^3$ is a null set given in Proposition \ref{mild solution f},
  $
   M_0=\int_{\R^3}f_0(v) \dd v,\,
   M_2=\int_{\R^3}|v|^2f_0(v) \dd v,$ and
    $C_2=\max\{1,C_1\}\|f_0\|_{L_3^1}$ is the right hand side of $(\ref{L3 estimate})$ in Proposition \ref{L3}.
  \end{proposition}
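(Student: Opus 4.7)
The plan is to start from the pointwise Duhamel bound (\ref{f(t,v) 2}) in its simplified form
$f_t(v) \le f_0(v) + (1+2K)\int_0^t \overline{E}_\sigma^t Q^+(f_\sigma,f_\sigma)(v)\dd\sigma$
(using $E_0^t(v)\le 1$), substitute it into each of the two copies of $f_t$ appearing in the inner $Q^+(f_t,f_t)$, and then use the bilinearity and symmetry (\ref{exchange}) of $Q^+$ together with the nested iteration estimate (\ref{Q+(Q+(ft,Q+(fr,fr)),Q+(fs,fs))}) of Proposition \ref{most important}(b) to control the resulting pieces.

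Substituting in just one copy of $f_t$ and using symmetry gives
$$Q^+(Q^+(f_t,f_t),Q^+(f_\tau,f_\tau))(v) \le Q^+(Q^+(f_0,f_t),Q^+(f_\tau,f_\tau))(v) + (1+2K)\int_0^t \overline{E}_\sigma^t Q^+\bigl(Q^+(f_t,Q^+(f_\sigma,f_\sigma)),Q^+(f_\tau,f_\tau)\bigr)(v)\dd\sigma.$$
In the first summand I apply Duhamel once more to the remaining $f_t$ inside $Q^+(f_0,f_t)$, producing a \emph{clean} term $Q^+(Q^+(f_0,f_0),Q^+(f_\tau,f_\tau))$ plus an additional time integral whose integrand has the same shape as the one displayed above but with $h=f_0$ in place of $h=f_t$. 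So after the two substitutions I need to control three summands: one clean term and two nested integrals.

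For the clean term I apply (\ref{Q+(ft,Q+(fs,fs))}) with its $f$ taken to be $Q^+(f_0,f_0)$ and $g=h=f_\tau$, and estimate $\|Q^+(f_0,f_0)\|_{L^1}$ using (\ref{Q+(f0,ft)L1}) and $\|Q^+(f_0,f_0)\|_{L^2}$ using (\ref{Q+f0f0}). The factor $\|f_0\|_{L^\infty}^{1/3}$ in the target inequality arises precisely from $\|f_0\|_{L^2}^{2/3}\le \|f_0\|_{L^\infty}^{1/3}\|f_0\|_{L^1}^{1/3}$, and using the crude monotonicity $\|f_0\|_{L^1_1}^{2/3}\le\|f_0\|_{L^1_2}^{2/3}$ together with mass conservation $\|f_\tau\|_{L^1}=\|f_0\|_{L^1}$ aligns the remaining norms with $\|f_0\|_{L^1}^{7/3}\|f_0\|_{L^1_2}^{4/3}$; a direct count of the prefactors $2^{5+2/3}\pi^{4/3}b^2\cdot(4\pi b)^{1/3}\cdot(2^{3+1/4}\pi b)^{2/3}$ gives exactly $2^{8+1/2}b^3\pi^{7/3}$.

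For each of the two remaining integrals, whose integrands are of the nested shape $Q^+(Q^+(h,Q^+(f_\sigma,f_\sigma)),Q^+(f_\tau,f_\tau))$ with $h\in\{f_0,f_t\}$, I apply (\ref{Q+(Q+(ft,Q+(fr,fr)),Q+(fs,fs))}). Crucially, this estimate only involves $L^1$ and weighted $L^1$ norms of its arguments, so no unavailable $L^\infty$ control is needed: conservation of mass and energy yields $\|f_t\|_{L^1}=\|f_0\|_{L^1}$, $\|f_t\|_{L^1_2}\le\|f_0\|_{L^1_2}$ (and similarly for $f_\sigma$, $f_\tau$), and the interpolations $\|f\|_{L^1_1}\le\|f\|_{L^1}^{1/2}\|f\|_{L^1_2}^{1/2}$ and $\|f\|_{L^1_{1/2}}\le\|f\|_{L^1}^{3/4}\|f\|_{L^1_2}^{1/4}$ reduce every weighted $L^1$-norm to $\|f_0\|_{L^1}$ and $\|f_0\|_{L^1_2}$. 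Summing exponents gives $\|f_0\|_{L^1}^{23/6}\|f_0\|_{L^1_2}^{7/6}$ for each integrand; bounding the time integrals by $\int_0^t\overline{E}_\sigma^t\dd\sigma \le 2^\beta C_2^{(\beta-1)/2}/(a(\min\{M_0,M_2\})^{(\beta+1)/2})$ and adding the two identical contributions promotes the constant $2^{11}$ in (\ref{Q+(Q+(ft,Q+(fr,fr)),Q+(fs,fs))}) to the target $2^{12}$. The main obstacle the plan has to skirt is that $\|f_t\|_{L^\infty}$ is \emph{not} available a priori — controlling it uniformly in time is the whole point of Theorem \ref{main results} — and the essential idea is that the double Duhamel substitution eliminates both copies of $f_t$ from the only place where an $L^\infty$ norm is forced (the $L^2$-bound (\ref{Q+f0f0})), while (\ref{Q+(Q+(ft,Q+(fr,fr)),Q+(fs,fs))}) absorbs the remaining $f_t$ through $L^1_2$-conservation alone.
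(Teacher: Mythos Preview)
Your proposal is correct and follows essentially the same approach as the paper's proof: the same two-fold Duhamel substitution into the inner $Q^+(f_t,f_t)$, the same three-term decomposition, the application of (\ref{Q+(ft,Q+(fs,fs))}) together with (\ref{Q+(f0,ft)L1}) and (\ref{Q+f0f0}) to the clean term, and (\ref{Q+(Q+(ft,Q+(fr,fr)),Q+(fs,fs))}) with conservation of mass and energy to the two time-integral terms, with $\int_0^t\overline{E}_\sigma^t\dd\sigma$ bounded via Proposition \ref{Duhamel}. Your constant tracking and your observation that the whole point is to avoid needing $\|f_t\|_{L^\infty}$ are both on target.
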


  \bepf According to Proposition \ref{Duhamel} we have  for any $t,\tau\in [0,\infty)$ and any $v\in \R^3\setminus Z$
  \beqa\label{Q(ftft,fsfs)}
  &&Q^+\big(Q^+(f_t,f_t),Q^+(f_\tau,f_\tau)\big)(v)
  \le Q^+\big(Q^+(f_t,f_0),Q^+(f_\tau,f_\tau)\big)(v)\no\\
  && +
 (1+2K)\int_0^t\overline{E}_{\tau_1}^{t}Q^+
 \big(Q^+(f_t,Q^+(f_{\tau_1},f_{\tau_1})),Q^+(f_\tau,f_\tau)\big)(v)\dd \tau_1.\quad\quad\quad
  \eeqa
   First we estimate the second term in the right hand side of (\ref{Q(ftft,fsfs)}). Due to  (\ref{Q+(Q+(ft,Q+(fr,fr)),Q+(fs,fs))}), H\"{o}lder's inequality and the conservation of mass and energy  of $f$ we have
 \beqa\label{Q(ft,frfr,fsfs)}
 &&
 \int_0^t \overline{E}_{\tau_1}^{t} Q^+\big(Q^+(f_t,Q^+(f_{\tau_1},f_{\tau_1})),Q^+(f_\tau,f_\tau)\big)(v)\dd \tau_1
  \no\\
  &&\le 2^{7}b^4|\mathbb S^1|^2|\bS|
  \|f_0\|_{L^1}^{\fr{23}{6}}\|f_0\|_{L_2^1}^{\fr{7}{6}}\int_0^t \overline{E}_{\tau_1}^{t} \dd \tau_1
  \no\\
  &&\le 2^{7}b^4|\mathbb S^1|^2|\bS|
  \|f_0\|_{L^1}^{\fr{23}{6}}\|f_0\|_{L_2^1}^{\fr{7}{6}}
  \fr{2^{\beta}{C_2}^{\fr{\beta-1}{2}}}
  {a(\min\{M_0, M_2\})^{\fr{\beta+1}{2}}}.
  \eeqa
  For the first term in the right hand side of (\ref{Q(ftft,fsfs)}) we deduce  from (\ref{exchange}), (\ref{Q+(f0,ft)L1})$-$(\ref{Q+(Q+(ft,Q+(fr,fr)),Q+(fs,fs))}) that
  \beqa\label{Q+(Q+(ft,f0),Q+(fs,fs))}
  &&Q^+\big(Q^+(f_t,f_0),Q^+(f_\tau,f_\tau)\big)(v)\no\\
  &&\le Q^+\big(Q^+(f_0,f_0),Q^+(f_\tau,f_\tau)\big)(v)\no\\
  && +
 (1+2K)\int_0^t\overline{E}_{\tau_1}^{t}Q^+
 \big(Q^+(f_0,Q^+(f_{\tau_1},f_{\tau_1})),Q^+(f_\tau,f_\tau)\big)(v)\dd \tau_1\no\\
  &&\le
  16b^2|\mathbb S^1||\bS|^{\fr{1}{3}}\|f_0\|^2_{L^1}
  (b|\bS|\|f_0\|_{L_2^1}^2)^{\fr{1}{3}}
  \big(2^{\fr{5}{4}}b|\bS|\|f_0\|_{L^{\infty}}^{\fr{1}{2}}
  \|f_0\|_{L^1}^{\fr{1}{2}}\|f_0\|_{L_2^1}\big)^{\fr{2}{3}}
  \no\\
  &&+(1+2K)\int_0^t\overline{E}_{\tau_1}^{t}Q^+
 \big(Q^+(f_0,Q^+(f_{\tau_1},f_{\tau_1})),Q^+(f_\tau,f_\tau)\big)(v)\dd \tau_1\no\\
 &&\le
 2^{4+\fr{5}{6}}b^3|\mathbb S^1||\bS|^{\fr{4}{3}}
 \|f_0\|_{L^1}^{\fr{7}{3}}\|f_0\|_{L_2^1}^{\fr{4}{3}}
 \|f_0\|_{L^{\infty}}^{\fr{1}{3}}\no\\
 &&+(1+2K)2^{7}b^4|\mathbb S^1|^2|\bS|
  \|f_0\|_{L^1}^{\fr{23}{6}}\|f_0\|_{L_2^1}^{\fr{7}{6}}
  \fr{2^{\beta}{C_2}^{\fr{\beta-1}{2}}}
  {a(\min\{M_0, M_2\})^{\fr{\beta+1}{2}}}
  \eeqa
  where we used the conservation of mass and energy of $f$.
Combining (\ref{Q(ftft,fsfs)}), (\ref{Q(ft,frfr,fsfs)}) and (\ref{Q+(Q+(ft,f0),Q+(fs,fs))})
 gives (\ref{Q+(Q+(ft,ft),Q+(fs,fs))}).
 \eepf
  \vskip3mm

  \begin{proposition}\label{estimate 2}
  Let $Q^{+}(\cdot,\cdot)$, $Q_{K}(\cdot)$ be defined in $(\ref{Q+ bilinear})$, $(\ref{QK})$ respectively with  $B$ satisfying $(\ref{condition of kernel})$. For any $0\le f_0\in L^1_3(\R^3)\cap L_3^{\infty}(\R^3)$ satisfying $\int_{{\R}^3} v f_0(v){\rm d}v=0$, let $f_t(v)\equiv f(t,v)$ be a mild solution of Eq.$(\ref{approximate equation 2})$ obtained in Proposition \ref{mild solution f} corresponding to the kernel $B$ with the initial datum $f_0$.
  Then for any  $t,\tau \in [0,\infty)$ and $v\in \R^3\setminus Z$
  \beqa\label{2}
  &&Q^+(f_t,Q^+(f_\tau,f_\tau))(v)
  \no\\
  &&\le
  2^{5+\fr{2}{3}}b^2\pi^{\fr{4}{3}}
  \|f_0\|_{L^1}^{\fr{8}{3}}\|f_0\|_{L^{\infty}}^{\fr{1}{3}}
  +(1+2K)\fr{2^{\beta}{C_2}^{\fr{\beta-1}{2}}}
  {a(\min\{M_0, M_2\})^{\fr{\beta+1}{2}}}
  \Bigg(
  2^{8+\fr{1}{2}}b^3\pi^{\fr{7}{3}}\|f_0\|_{L^1}^{\fr{7}{3}}\|f_0\|_{L_2^1}^{\fr{4}{3}}
 \|f_0\|_{L^{\infty}}^{\fr{1}{3}}\no\\
  &&\quad + (1+2K)2^{12}b^4\pi^{3}
  \|f_0\|_{L^1}^{\fr{23}{6}}\|f_0\|_{L_2^1}^{\fr{7}{6}}
  \fr{2^{\beta}{C_2}^{\fr{\beta-1}{2}}}
  {a(\min\{M_0, M_2\})^{\fr{\beta+1}{2}}}
  \Bigg)
  \eeqa
 where $Z\subset \R^3$ is a null set given in Proposition \ref{mild solution f},
  $
   M_0=\int_{\R^3}f_0(v) \dd v,\,
   M_2=\int_{\R^3}|v|^2f_0(v) \dd v,$ and
    $C_2=\max\{1,C_1\}\|f_0\|_{L_3^1}$ is the right hand side of $(\ref{L3 estimate})$ in Proposition \ref{L3}.
  \end{proposition}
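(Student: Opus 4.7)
The plan is to apply Duhamel's formula (\ref{f(t,v) 2}) from Proposition \ref{Duhamel} to the first argument $f_t$ of $Q^+(f_t,Q^+(f_\tau,f_\tau))$, exploiting the bilinearity and monotonicity of $Q^+$ on nonnegative functions to split the estimate into a ``boundary'' piece (coming from the $E_0^t(v)f_0(v)$ term) and a ``time-integral'' piece (coming from the Duhamel integral of $Q^+(f_{\tau_1},f_{\tau_1})$). This is the same Q$^+$-iteration philosophy used in Propositions \ref{estimate 3} and \ref{estimate 1}, one level deeper.

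More precisely, I would first use (\ref{f(t,v) 2}) together with $E_0^t(v)\le 1$ and the monotonicity of $Q^+$ to write, for all $v\in \R^3\setminus Z$,
\begin{eqnarray*}
Q^+\bigl(f_t,Q^+(f_\tau,f_\tau)\bigr)(v)
&\le& Q^+\bigl(f_0,Q^+(f_\tau,f_\tau)\bigr)(v)\\
&&+\,(1+2K)\int_0^t \overline{E}_{\tau_1}^{t}\,
Q^+\bigl(Q^+(f_{\tau_1},f_{\tau_1}),\,Q^+(f_\tau,f_\tau)\bigr)(v)\,\dd\tau_1.
\end{eqnarray*}
For the first term I would invoke (\ref{Q+(ft,Q+(fs,fs))}) with $f\leftarrow f_0$, $g,h\leftarrow f_\tau$, then use the interpolation $\|f_0\|_{L^2}\le \|f_0\|_{L^{\infty}}^{1/2}\|f_0\|_{L^1}^{1/2}$ and the conservation of mass $\|f_\tau\|_{L^1}=\|f_0\|_{L^1}$ to collect the exponents: $\tfrac{1}{3}+\tfrac{1}{3}+2=\tfrac{8}{3}$ for $\|f_0\|_{L^1}$ and $\tfrac{1}{3}$ for $\|f_0\|_{L^{\infty}}$, yielding exactly the first constant $2^{5+\frac{2}{3}}b^2\pi^{\frac{4}{3}}\|f_0\|_{L^1}^{8/3}\|f_0\|_{L^{\infty}}^{1/3}$ in (\ref{2}).

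For the time-integral term I would apply Proposition \ref{estimate 1} to the integrand: the bound (\ref{Q+(Q+(ft,ft),Q+(fs,fs))}) is uniform in the time parameters (it only depends on norms of $f_0$ and on the mass/energy), so it pulls out of the $\tau_1$-integral. The remaining factor is $\int_0^t \overline{E}_{\tau_1}^{t}\dd\tau_1$, which, by the explicit form of $\overline{E}_\tau^t$ in Proposition \ref{Duhamel}, is bounded by $\dfrac{2^{\beta}C_2^{(\beta-1)/2}}{a(\min\{M_0,M_2\})^{(\beta+1)/2}}$. Multiplying out and adding to the boundary piece yields exactly (\ref{2}).

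There is no real obstacle here; the only technical point worth checking is the bookkeeping of exponents when combining $\|\cdot\|_{L^1}$-conservation with the $\|f_0\|_{L^2}\le\|f_0\|_{L^{\infty}}^{1/2}\|f_0\|_{L^1}^{1/2}$ interpolation, so that the first summand in (\ref{2}) comes out with precisely the powers $\tfrac{8}{3}$ and $\tfrac{1}{3}$ advertised. All the harder analytic work---the trilinear $L^{\infty}$-bound on $Q^+(f,Q^+(g,h))$, the cascade estimate in Proposition \ref{estimate 1}, and the pointwise lower bound on $L_K(f_t)$ that makes $\int_0^t\overline{E}_{\tau_1}^t\dd\tau_1$ bounded uniformly in $t$---has already been done upstream, so the proof is essentially a one-line Duhamel split followed by substitution of the two previously established estimates.
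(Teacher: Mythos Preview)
Your proposal is correct and follows essentially the same route as the paper: Duhamel-split $f_t$ via (\ref{f(t,v) 2}), bound the boundary piece $Q^+(f_0,Q^+(f_\tau,f_\tau))$ by (\ref{Q+(ft,Q+(fs,fs))}) together with $\|f_0\|_{L^2}\le \|f_0\|_{L^\infty}^{1/2}\|f_0\|_{L^1}^{1/2}$ and mass conservation, and bound the time-integral piece by Proposition~\ref{estimate 1} plus $\int_0^t\overline{E}_{\tau_1}^t\dd\tau_1\le 2^{\beta}C_2^{(\beta-1)/2}\big/\big(a(\min\{M_0,M_2\})^{(\beta+1)/2}\big)$. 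Your exponent bookkeeping for the first term ($\tfrac13+\tfrac13+2=\tfrac83$ and $\tfrac13$) is exactly right.
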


  \bepf From (\ref{exchange}), (\ref{Q+(ft,Q+(fs,fs))}), (\ref{f(t,v) 2}) and (\ref{Q+(Q+(ft,ft),Q+(fs,fs))}) we have for any $t,\tau \in [0,\infty)$ and any $v\in \R^3\setminus Z$
  \beas
  &&Q^+(f_t,Q^+(f_\tau,f_\tau))(v)
  \no\\
  &&\le Q^+\Big(E_0^{t}f_0+(1+2K)\int_0^t \overline{E}_{\tau_1}^{t}Q^+(f_{\tau_1},f_{\tau_1})\dd \tau_1,Q^+(f_\tau,f_\tau)\Big)(v)
  \no\\
  &&\le Q^+(f_0,Q^+(f_\tau,f_\tau))(v)
  +(1+2K)\int_0^t \overline{E}_{\tau_1}^{t}Q^+\big(Q^+(f_{\tau_1},f_{\tau_1}),Q^+(f_\tau,f_\tau)\big)\dd \tau_1
  \no\\
  &&\le
  16b^2|\mathbb S^1||\bS|^{\fr{1}{3}}
  \|f_0\|_{L^1}^{\fr{8}{3}}\|f_0\|_{L^{\infty}}^{\fr{1}{3}}
  +(1+2K)\fr{2^{\beta}{C_2}^{\fr{\beta-1}{2}}}
  {a(\min\{M_0, M_2\})^{\fr{\beta+1}{2}}}
  \Bigg(
  2^{4+\fr{5}{6}}b^3|\mathbb S^1||\bS|^{\fr{4}{3}}
 \|f_0\|_{L^1}^{\fr{7}{3}}\|f_0\|_{L_2^1}^{\fr{4}{3}}
 \|f_0\|_{L^{\infty}}^{\fr{1}{3}}\no\\
  &&\quad + (1+2K)2^{8}b^4|\mathbb S^1|^2|\bS|
  \|f_0\|_{L^1}^{\fr{23}{6}}\|f_0\|_{L_2^1}^{\fr{7}{6}}
  \fr{2^{\beta}{C_2}^{\fr{\beta-1}{2}}}
  {a(\min\{M_0, M_2\})^{\fr{\beta+1}{2}}}
  \Bigg)
  \eeas
  where we used the conservation of mass and energy of $f$.
This gives (\ref{2}).
\eepf
\vskip3mm

   \begin{proposition}\label{continuity}
 Let $Q^{+}(\cdot,\cdot)$, $Q_{K}(\cdot)$ be defined in $(\ref{Q+ bilinear})$, $(\ref{QK})$ respectively with  $B$ satisfying $(\ref{condition of kernel})$. For any $0\le f_0\in L^1_3(\R^3)\cap L_3^{\infty}(\R^3)$ satisfying $\int_{{\R}^3} v f_0(v){\rm d}v=0$, let $f_t(v)\equiv f(t,v)$ be a mild solution of Eq.$(\ref{approximate equation 2})$ obtained in Proposition \ref{mild solution f} corresponding to the kernel $B$ with the initial datum $f_0$.
Then for any $(t,v)\in [0,\infty)\times(\R^3\setminus Z)$ we have
   \beqa\label{Q+(ft,ft)}
   Q^{+}(f_t,f_t)(v)
   \le A(f_0)\la v\ra+P(f_0)\eeqa
   where
   \beas
   && A(f_0)=2^5b\pi\|f_0\|_{L^{\infty}}\|f_0\|_{L_1^1},\\
   &&
   P(f_0)=(1+2K) 2^{6+\fr{1}{3}}b^2\pi^{\fr{4}{3}}
   \|f_0\|_{L^1}^{\fr{8}{3}}\|f_0\|_{L^{\infty}}^{\fr{1}{3}}
   \fr{2^{\beta}{C_2}^{\fr{\beta-1}{2}}}
  {a(\min\{M_0, M_2\})^{\fr{\beta+1}{2}}}
  \\
   &&\quad +\fr{2^{\beta}{C_2}^{\fr{\beta-1}{2}}}
  {a(\min\{M_0, M_2\})^{\fr{\beta+1}{2}}}\Bigg\{(1+2K)
   2^{6+\fr{1}{2}}b^2\pi^{\fr{4}{3}}
   \|f_0\|_{L^1}^{\fr{8}{3}}\|f_0\|_{L^{\infty}}^{\fr{1}{3}}
  +(1+2K)^2
  \fr{2^{\beta}{C_2}^{\fr{\beta-1}{2}}}
  {a(\min\{M_0, M_2\})^{\fr{\beta+1}{2}}}\no\\
  &&\quad\times\bigg[
  2^{8+\fr{1}{2}}b^3\pi^{\fr{7}{3}}\|f_0\|_{L^1}^{\fr{7}{3}}\|f_0\|_{L_2^1}^{\fr{4}{3}}
  \|f_0\|_{L^{\infty}}^{\fr{1}{3}} 
  +(1+2K)2^{12}b^4\pi^{3}
  \|f_0\|_{L^1}^{\fr{23}{6}}\|f_0\|_{L_2^1}^{\fr{7}{6}}
  \fr{2^{\beta}{C_2}^{\fr{\beta-1}{2}}}
  {a(\min\{M_0, M_2\})^{\fr{\beta+1}{2}}}
  \bigg]\Bigg\}
  \eeas
 where $Z\subset \R^3$ is a null set given in Proposition \ref{mild solution f},
  $
   M_0=\int_{\R^3}f_0(v) \dd v,\,
   M_2=\int_{\R^3}|v|^2f_0(v) \dd v,$ and
    $C_2=\max\{1,C_1\}\|f_0\|_{L_3^1}$ is the right hand side of $(\ref{L3 estimate})$ in Proposition \ref{L3}.
  \end{proposition}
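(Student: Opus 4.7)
The plan is to combine the Duhamel-type pointwise bound from Proposition \ref{Duhamel} with the two iterated-gain estimates of Propositions \ref{estimate 3} and \ref{estimate 2}. First, I would substitute inequality (\ref{f(t,v)}) into the second argument of $Q^+(f_t,f_t)(v)$. Using the bilinearity and nonnegativity/monotonicity of $Q^+$ in each slot, the trivial bound $E_0^t\le 1$, and the pointwise estimate $\wt E_\tau^t(v_*')\le\overline E_\tau^t$ (valid since $\la v_*'\ra\ge 1$ and $\overline E_\tau^t$ is a scalar independent of the integration variables), followed by Fubini to interchange $Q^+(f_t,\cdot)$ with $\int_0^t\dd\tau_1$ (all integrands are nonnegative), one arrives at the decomposition
\beas
Q^+(f_t,f_t)(v)\le Q^+(f_t,f_0)(v)+(1+2K)\int_0^t\overline E_{\tau_1}^t\,Q^+\!\big(f_t,Q^+(f_{\tau_1},f_{\tau_1})\big)(v)\,\dd\tau_1.
\eeas

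Second, the first term is the only carrier of $v$-dependence. Proposition \ref{estimate 3} bounds it in the form $(\text{const})\cdot\la v\ra+(\text{const})\cdot\mathcal K$, where $\mathcal K:=2^\beta C_2^{(\beta-1)/2}/[a(\min\{M_0,M_2\})^{(\beta+1)/2}]$; the $\la v\ra$-coefficient is exactly $A(f_0)=2^5 b\pi\|f_0\|_{L^\infty}\|f_0\|_{L^1_1}$. For the bulk integral, Proposition \ref{estimate 2} supplies a bound on $Q^+(f_t,Q^+(f_{\tau_1},f_{\tau_1}))(v)$ that is uniform in $t$, $\tau_1$, and $v$, and a direct computation gives $\int_0^t\overline E_{\tau_1}^t\,\dd\tau_1\le\mathcal K$. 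Summing the two constant-in-$v$ contributions (the one from Proposition \ref{estimate 3} and the product of the Proposition \ref{estimate 2} bound with $(1+2K)\mathcal K$) reproduces $P(f_0)$ in the stated form.

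Conceptually this proof is algebraic: all the substantive analytic work --- the loss-operator lower bound of Proposition \ref{L estimate}, the $L^1_3$ moment propagation of Proposition \ref{L3}, and the $Q^+$-iteration machinery of Proposition \ref{most important} --- has already been absorbed into Propositions \ref{estimate 3} and \ref{estimate 2}. The step that I expect to require the most care is the decomposition displayed above: separating the exponential decay factor $\wt E_\tau^t(v_*')$ from the inner $Q^+$ by replacing it with the scalar $\overline E_\tau^t$ is the key move that lets the $\tau_1$-integration factor out and yields the finite time-integrated bound; if one tried instead to keep $\wt E_\tau^t(v_*')$ inside the $v_*'$-integral the gain would be velocity-coupled and the subsequent appeal to Proposition \ref{estimate 2} would no longer apply cleanly.
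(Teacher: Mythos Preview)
Your proposal is correct and matches the paper's proof essentially line for line: substitute the Duhamel bound into one slot of $Q^+(f_t,f_t)$, split into $Q^+(f_t,f_0)$ plus a time-integrated iterated gain term, invoke Propositions \ref{estimate 3} and \ref{estimate 2}, and use $\int_0^t\overline E_\tau^t\,\dd\tau\le\mathcal K$. The only cosmetic difference is that the paper substitutes (\ref{f(t,v) 2}) (which already carries the scalar $\overline E_\tau^t$) into the first argument rather than (\ref{f(t,v)}) into the second, so it never needs to articulate the $\wt E_\tau^t(v_*')\le\overline E_\tau^t$ step you highlight; by the symmetry (\ref{exchange}) the two routes are identical.
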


   \bepf This is just a calculation:
  according to Propositions \ref{Duhamel}$-$\ref{estimate 2},
   we have for all $(t,v)\in [0,\infty)\times(\R^3\setminus Z)$
\beqa\label{Q+(ft,ft)}
   &&Q^{+}(f_t,f_t)(v)\le Q^{+}\Big( E_0^{t}f_0+(1+2K)\int_{0}^{t}\overline{E}_{\tau}^{t}Q^{+}(f_\tau,f_\tau)\dd \tau, \,f_t \Big)(v)
   \no\\
   &&\le Q^{+}(f_0,f_t)(v)+(1+2K)\int_{0}^{t}\overline{E}_{\tau}^{t}
   Q^{+}(Q^{+}(f_\tau,f_\tau),f_t)(v)\dd \tau
   \no\\
   &&\le
   8b|\bS|\|f_0\|_{L^{\infty}}\|f_0\|_{L_1^1}\la v\ra
   +(1+2K)16b^2|\mathbb S^1||\bS|^{\fr{1}{3}}
   \|f_0\|_{L^1}^{\fr{8}{3}}\|f_0\|_{L^{\infty}}^{\fr{1}{3}}
   \fr{2^{\beta}{C_2}^{\fr{\beta-1}{2}}}
  {a(\min\{M_0, M_2\})^{\fr{\beta+1}{2}}}
   \no\\
   &&\quad +\int_{0}^{t}\overline{E}_{\tau}^{t}\dd \tau\Bigg\{(1+2K)
   16b^2|\mathbb S^1||\bS|^{\fr{1}{3}}
   \|f_0\|_{L^1}^{\fr{8}{3}}\|f_0\|_{L^{\infty}}^{\fr{1}{3}}
  +(1+2K)^2
  \fr{2^{\beta}{C_2}^{\fr{\beta-1}{2}}}
  {a(\min\{M_0, M_2\})^{\fr{\beta+1}{2}}}\no\\
  &&\times\bigg[
  2^{4+\fr{5}{6}}b^3|\mathbb S^1||\bS|^{\fr{4}{3}}
 \|f_0\|_{L^1}^{\fr{7}{3}}\|f_0\|_{L_2^1}^{\fr{4}{3}}
 \|f_0\|_{L^{\infty}}^{\fr{1}{3}}
 + (1+2K)2^{8}b^4|\mathbb S^1|^2|\bS|
  \|f_0\|_{L^1}^{\fr{23}{6}}\|f_0\|_{L_2^1}^{\fr{7}{6}}
  \fr{2^{\beta}{C_2}^{\fr{\beta-1}{2}}}
  {a(\min\{M_0, M_2\})^{\fr{\beta+1}{2}}}
  \bigg]\Bigg\}
  \no\\
  &&\le
  8b|\bS|\|f_0\|_{L^{\infty}}\|f_0\|_{L_1^1}\la v\ra
   +(1+2K)16b^2|\mathbb S^1||\bS|^{\fr{1}{3}}
   \|f_0\|_{L^1}^{\fr{8}{3}}\|f_0\|_{L^{\infty}}^{\fr{1}{3}}
   \fr{2^{\beta}{C_2}^{\fr{\beta-1}{2}}}
  {a(\min\{M_0, M_2\})^{\fr{\beta+1}{2}}}
   \no\\
   &&\quad +\fr{2^{\beta}{C_2}^{\fr{\beta-1}{2}}}
  {a(\min\{M_0, M_2\})^{\fr{\beta+1}{2}}}\Bigg\{(1+2K)
   16b^2|\mathbb S^1||\bS|^{\fr{1}{3}}
   \|f_0\|_{L^1}^{\fr{8}{3}}\|f_0\|_{L^{\infty}}^{\fr{1}{3}}
  +(1+2K)^2
  \fr{2^{\beta}{C_2}^{\fr{\beta-1}{2}}}
  {a(\min\{M_0, M_2\})^{\fr{\beta+1}{2}}}
  \no\\
  &&\quad\times\bigg[
  2^{4+\fr{5}{6}}b^3|\mathbb S^1||\bS|^{\fr{4}{3}}
 \|f_0\|_{L^1}^{\fr{7}{3}}\|f_0\|_{L_2^1}^{\fr{4}{3}}
 \|f_0\|_{L^{\infty}}^{\fr{1}{3}}
  + (1+2K)2^{8}b^4|\mathbb S^1|^2|\bS|
  \|f_0\|_{L^1}^{\fr{23}{6}}\|f_0\|_{L_2^1}^{\fr{7}{6}}
  \fr{2^{\beta}{C_2}^{\fr{\beta-1}{2}}}
  {a(\min\{M_0, M_2\})^{\fr{\beta+1}{2}}}
  \bigg]\Bigg\}\no\\
  &&=A(f_0)\la v\ra +P(f_0).\no\eeqa
  \eepf
\vskip3mm

\section{Proof of Theorem \ref{main results}}

Having made sufficient preparations in previous sections we can now turn to
the

{\bf Proof of Theorem \ref{main results}.} To be clear we divide the proof into four steps.

 {\bf Step 1}.  We prove that the mild solution $f$ of Eq.(\ref{approximate equation 2}) obtained in
  Proposition {\ref{mild solution f} is a mild solution of Eq.(\ref{Equation}) by proving $f\le K$ with a suitable constant $K>\|f_0\|_{L^{\infty}}$.  Let $Z\subset \R^3$ be the null set appeared in Proposition \ref{mild solution f} and
  let
  $$C_0=\Big(\fr{22}{5}\sqrt{2}-\fr{31}{5}\Big)\pi,\quad C_2=\{1,C_1\}\|f_0\|_{L_3^1},\quad  C_{3}=\fr{2^{\beta}{C_2}^{\fr{\beta-1}{2}}}
  {a(\min\{M_0, M_2\})^{\fr{\beta+1}{2}}}$$
  where $C_1$ is given in (\ref{C1 def}) in Proposition \ref{L3}.
   Using Propositions \ref{Duhamel} and \ref{continuity} we compute for all $(t,v)\in [0,\infty)\times(\R^3\setminus Z)$
  \beqa\label{estimate}
  f(t,v)
  &\le&
  f_0(v)+(1+2K)\int_0^t \wt{E}^{t}_{\tau}(v)A(f_0)\la v\ra + \overline{E}_{\tau}^{t}P(f_0)\dd \tau
  \no\\
  &\le&
  f_0(v)
  +(1+2K)C_{3}
  \Bigg\{8b|\bS|\|f_0\|_{L^{\infty}}\|f_0\|_{L_1^1}
  +(1+2K)16b^2|\mathbb S^1||\bS|^{\fr{1}{3}}
   \|f_0\|_{L^1}^{\fr{8}{3}}\|f_0\|_{L^{\infty}}^{\fr{1}{3}}
   C_{3}
   \no\\
   &&+C_3\bigg[(1+2K)
   16b^2|\mathbb S^1||\bS|^{\fr{1}{3}}
   \|f_0\|_{L^1}^{\fr{8}{3}}\|f_0\|_{L^{\infty}}^{\fr{1}{3}}
  +(1+2K)^2C_{3}\no\\
  &&\times\bigg(
  2^{4+\fr{5}{6}}b^3|\mathbb S^1||\bS|^{\fr{4}{3}}
 \|f_0\|_{L^1}^{\fr{7}{3}}\|f_0\|_{L_2^1}^{\fr{4}{3}}
 \|f_0\|_{L^{\infty}}^{\fr{1}{3}}
   + (1+2K)2^{8}b^4|\mathbb S^1|^2|\bS|
  \|f_0\|_{L^1}^{\fr{23}{6}}\|f_0\|_{L_2^1}^{\fr{7}{6}}C_{3}
  \bigg)\bigg]\Bigg\}
  \no\\
  &\le& f_0(v)+ 2^8|\mathbb S^1|^2|\bS|(1+2K)^4 C_4
 \eeqa
where
  \beqa\label{CC4}&&C_4= b C_3(M_0+M_2)\|f_0\|_{L^{\infty}}
  + (b C_3)^2 {M_0}^{\fr{8}{3}}\|f_0\|_{L^{\infty}}^{\fr{1}{3}}
  +(b C_3)^3{M_0}^{\fr{7}{3}}(M_0+M_2)^{\fr{4}{3}}
  \|f_0\|_{L^{\infty}}^{\fr{1}{3}}
  \no\\
  &&
  \qquad\,\,\,
  +(b C_3)^4
  {M_0}^{\fr{23}{6}}(M_0+M_2)^{\fr{7}{6}}.\eeqa
  From (\ref{estimate}) we see that in order to prove $f(t,v)\le K$ for all $(t,v)\in [0,\infty)\times (\R^3\setminus Z)$, it suffices to prove that
  $\|f_0\|_{L^{\infty}}+2^8|\mathbb S^1|^{2}|\bS|(1+2K)^4 C_4\le K$, i.e. to prove that
  \beqa\label{to prove}
  &&   C_4
  \le \fr{K-\|f_0\|_{L^{\infty}}}
  {2^8 |\mathbb S^1|^{2}|\bS| (1+2K)^4}.
  \eeqa
 Denoting
 $\rho_0:=M_2/M_0$,
 we will prove (\ref{to prove}) by discussing the following four cases.

{\bf Case 1:} $ C_1 \ge 1$ and $\rho_0\ge 1$.
 In this case we have
  \beas
  bC_{3}
  \le
  C(a,b)\rho_0^{\beta}\cdot \fr{1}{M_2}
  \eeas
  where
  \beas
  C(a,b)=\fr{2^{2\beta-1}(2^6(1+2K)|\bS|)^{\fr{\beta-1}{2}}}
  {C_0^{\fr{\beta-1}{2}}}\Big(\fr{b}{a}\Big)^{\fr{\beta+1}{2}}
  \eeas
  and we have used $0<C_0<1, 0<a\le b<\infty$.
 By definition of $C_4$ in (\ref{CC4}), $\rho_0\ge 1$ and $C(a,b)> 1$,  this gives
 \beqa\label{C1}
  && C_4+\fr{\|f_0\|_{L^{\infty}}}
  {2^8 |\mathbb S^1|^{2}|\bS| (1+2K)^4}\le
  C(a,b)\rho_0^{\beta-1}
   (1+\rho_0)\|f_0\|_{L^{\infty}}
   +
 C(a,b)^2\rho_0^{2(\beta-1)} M_0^{\fr{2}{3}}
  \|f_0\|_{L^{\infty}}^{\fr{1}{3}}\no\\
  &&\quad\,\,+
  +C(a,b)^3\rho_0^{3(\beta-1)}
  (1+\rho_0)^{\fr{4}{3}}
  M_0^{\fr{2}{3}} \|f_0\|_{L^{\infty}}^{\fr{1}{3}}
  +
  C(a,b)^4\rho_0^{4(\beta-1)}(1+\rho_0)M_0+\|f_0\|_{L^\infty}
  \no\\
  &&\quad\,\,
  \le
  4\rho_0^{4\beta-3}C(a,b)^4
  \big(
   M_0+\|f_0\|_{L^{\infty}}
  \big)
 \eeqa
 where we used the inequality $x^{p}y^{1-p}\le px+(1-p)y$ for all $x,y>0$ and $0\le p\le 1$.
Combining  this with (\ref{C1}) we see that in order to prove (\ref{to prove})  it suffices to prove that
 \beqa\label{CASE1}
 &&\rho_0^{4\beta-3} (M_0+ \|f_0\|_{L^{\infty}}
  )
  \le
  \fr{C_0^{2(\beta-1)}K}
  {2^{20\beta-6}
  |\mathbb S^1|^{2}|\bS|^{2\beta-1}
  (1+2K)^{2(\beta+1)}}\Big(\fr{a}{b}\Big)^{2\beta+2}.
 \eeqa

 {\bf Case 2:}
 $ C_1 \ge 1$ and
 $\rho_0< 1$.
For this case we have
 \beas
 b C_{3}
 \le C(a,b)\fr{1}{{\rho_0}^{\beta}}\fr{1}{M_0}.
 \eeas
 An argument similar to the one used in {\bf Case1} shows that
 \beqa\label{C2}
  &&C_4+\fr{\|f_0\|_{L^{\infty}}}
  {2^8 |\mathbb S^1|^{2}|\bS| (1+2K)^4}\le
   4C(a,b)^4\fr{1}{{\rho_0}^{4\beta}}(
   M_0+ \|f_0\|_{L^{\infty}}) \quad\quad
 \eeqa
Following the same argument as in {\bf Case 1} we see that in this case in order to prove (\ref{to prove})  it suffices to prove that
 \beqa\label{CASE2}
 &&
  \fr{1}{{\rho_0}^{4\beta}}(
   M_0+ \|f_0\|_{L^{\infty}})
  \le
  \fr{C_0^{2(\beta-1)}K}
  {2^{20\beta-6}
  |\mathbb S^1|^{2}|\bS|^{2\beta-1}
  (1+2K)^{2(\beta+1)}}\Big(\fr{a}{b}\Big)^{2\beta+2}.
 \eeqa

 {\bf Case 3:}
 $ C_1< 1 $ and $\rho_0\ge1$.
 In this case we have
 \beas
 bC_{3}\le
 2^{\beta}
 \Big(\fr{\|f_0\|_{L_3^1}}{M_0}\Big)^{\fr{\beta-1}{2}}\fr{b}{a}\cdot
 \fr{1}{M_0}
 :=\wt{C}(a,b, M_0,\|f_0\|_{L_3^1})\fr{1}{M_0},
 \eeas
 Inserting this into the expression of $C_4$ and noticing that
 $\wt{C}(a,b, M_0,\|f_0\|_{L_3^1})\ge 1$  we compute
 \beqa\label{C3}
 && C_4+\fr{\|f_0\|_{L^{\infty}}}
  {2^8 |\mathbb S^1|^{2}|\bS| (1+2K)^4}\le
   4\rho_0\wt{C}(a,b, M_0,\|f_0\|_{L_3^1})^4(
   M_0+ \|f_0\|_{L^{\infty}}).
  \eeqa
Thus, to prove (\ref{to prove}), it suffices to prove that
  \beqa\label{MM}   M_0+ \|f_0\|_{L^{\infty}}
   \le
   \fr{K}
  { 2^{10} |\mathbb S^1|^{2}|\bS|\rho_0\wt{C}(a,b,M_0,\|f_0\|_{L_3^1})^4(1+2K)^4
   }.
  \eeqa
 While from the definition of $\wt{C}(a,b, M_0,\|f_0\|_{L_3^1})$ we see that in order to prove (\ref{MM})
 it suffices to prove that
  \beqa\label{CASE3}
  (M_0+ \|f_0\|_{L^{\infty}})
 \Big(\fr{\|f_0\|_{L_3^1}}{M_0}\Big)^{2\beta-1}
   \le
   \fr{K}
  {2^{4\beta+10}|\mathbb S^1|^{2}|\bS|(1+2K)^4}\Big(\fr{a}{b}\Big)^4.
  \eeqa

  {\bf Case 4:}
 $ C_1< 1 $ and $\rho_0< 1$.
 For this case we have
 \beas
 bC_{3}\le
 2^{\beta}
 \Big(\fr{\|f_0\|_{L_3^1}}{M_2}\Big)^{\fr{\beta-1}{2}}\fr{b}{a}\cdot
 \fr{1}{M_2}
 =:\overline{C}(a,b,M_2,\|f_0\|_{L_3^1})\fr{1}{M_2}.
 \eeas
 An argument similar to the one used in {\bf Case3} shows that
 \beqa\label{C4}
 &&
 C_4+\fr{\|f_0\|_{L^{\infty}}}
  {2^8 |\mathbb S^1|^{2}|\bS| (1+2K)^4}\le
   4\fr{1}{{\rho_0}^4}\overline{C}(a,b,M_2,\|f_0\|_{L_3^1})^4
  (
   M_0+ \|f_0\|_{L^{\infty}}
  ).
  \eeqa
  Following the same argument as in {\bf Case 3}, we see that in order to prove (\ref{to prove}) it suffices to prove
  \beqa\label{CASE4}
  (M_0+ \|f_0\|_{L^{\infty}})
 \Big(\fr{\|f_0\|_{L_3^1}}{M_2}\Big)^{2(\beta+1)}
   \le
   \fr{K}
  {2^{4\beta+10}|\mathbb S^1|^{2}|\bS|(1+2K)^4}\Big(\fr{a}{b}\Big)^4.
  \eeqa

 Now we summarize these four cases.
  From (\ref{CASE1}), (\ref{CASE2}), (\ref{CASE3}) and (\ref{CASE4}) we see that in order to prove (\ref{to prove}) it suffices to prove
 \beas
 (\|f_0\|_{L^1}+ \|f_0\|_{L^{\infty}})
  \Big(\fr{\|f_0\|_{L_3^1}}{\min\{M_0,M_2\}}\Big)^{4\beta}
  \le
  \fr{C_0^{2(\beta-1)}K}
  {2^{20\beta-6}
  |\mathbb S^1|^{2}|\bS|^{2\beta-1}
  (1+2K)^{2(\beta+1)}}\Big(\fr{a}{b}\Big)^{2\beta+2}.
 \eeas
 Maximizing the right hand side of it with respect to $K$ gives that a good choice of $K$ is $K=\fr{1}{4\beta+2}$. Thus to prove proved (\ref{to prove}) with $K=\fr{1}{4\beta+2}$, we need only to prove that
  \beas
 (\|f_0\|_{L^1}+ \|f_0\|_{L^{\infty}})
  \Big(\fr{\|f_0\|_{L_3^1}}{\min\{M_0,M_2\}}\Big)^{4\beta}
  \le \fr{1}
  {2^{35\beta-11}}
  \fr{(4\beta+2)^{2\beta+1}}
  {(4\beta+4)^{2\beta+2}}\Big(\fr{a}{b}\Big)^{2(\beta+1)},
  \eeas
but this is just the given condition (\ref{condition}).
Thus we have proved that
$f(t,v)\le K=\fr{1}{4\beta+2}$ for all $(t,v)\in [0,\infty)\times (\R^3\setminus Z)$.
Then by elementary calculation we have $Q_K(f)(t,v)=Q(f)(t,v)$ for ${\rm a.e}$ $(t,v)\in [0,\infty)\times \R^3$. This implies that $f(t,v)=f_0(v)+\int_0^t Q(f)(\tau,v)\dd \tau$ for ${\rm a.e.}$ $(t,v)\in [0,\infty)\times \R^3$. Since for almost every  $v\in \Og,\,
t\mapsto f(t,v),\, t\mapsto \int_0^t Q(f)(\tau,v)\dd \tau$ are  continuous on $[0,\infty)$, it follows from Lemma \ref{Lemma H} that there exists a null set (still denote it as $Z$) $Z\subset \R^3$, such that
for all $(t,v)\in [0,\infty)\times(\R^3\setminus Z)$, $f(t,v)=f_0(v)+\int_0^t Q(f)(\tau,v)\dd \tau$, i.e.
    \beqa\label{1.4 mild solution}
    f(t,v)=f_0(v)+\int_0^t\dd \tau \iint\limits_{\R^3\times\SP^2}B(v-v_*,\si)\big[f'f'_{*}(1+ f+ f_{*})-ff_{*}(1+ f'+ f'_{*})\big]\dd \si\dd v_{*}.
    \eeqa
This proves that $f$ is a mild solution of Eq.({\ref{Equation}).
Moreover combining (\ref{C1}), (\ref{C2}), (\ref{C3}) and (\ref{C4}) leads to
    \beqa\label{about infty}
    &&f(t,v)
    \le
   2^{35\beta}
    \Big(\fr{b}{a}\Big)^{2(\beta+1)}
    \fr{(2\beta+2)^{2\beta-2}}{(2\beta+3)^{2\beta-2}}
    \Big(\fr{\|f_0\|_{L_3^1}}{\min\{M_0,M_2\}}\Big)^{4\beta}
   (\|f_0\|_{L^1}+ \|f_0\|_{L^{\infty}})=:C_{L^{\infty}}(f_0)\qquad
    \eeqa
     for all $(t,v)\in [0,\infty)\times(\R^3\setminus Z)$.

{\bf Step 2.} We will use Lemma \ref{lemma9} to prove the high temperature condition (\ref{Kinetic high change}).
Define
$
\phi(r):=\fr{1}{4\pi \|f_0\|_{L^{\infty}}}\int_{\SP^2}f_0(r\si)\dd \si
$
for $r\in [0,\infty).$
Then $0\le \phi(r)\le 1$ for all $r\in [0,\infty)$ and
\beas
\fr{M_0}{4\pi \|f_0\|_{L^{\infty}}}=\int_0^{\infty}r^2 \phi(r)\dd r,
\quad
\fr{M_2}{4\pi \|f_0\|_{L^{\infty}}}=\int_0^{\infty}r^4 \phi(r)\dd r.
\eeas
  According to the inequality (\ref{Tc estimate}), it follows from (\ref{rate of Kinetic temperature}) and (\ref{condition}) that
\beas
\fr{\overline{T}}{\overline{T}_{c}}
\ge
\fr{2\pi[\zeta(3/2)]^{\fr{5}{3}}}{3\zeta(5/2)}\fr{3^{\fr{5}{3}}}{5(4\pi)^{\fr{2}{3}}
\|f_0\|_{L^{\infty}}^{\fr{2}{3}}}
\ge
2^{\fr{70\beta}{3}-\fr{19}{3}}
\fr{(4\beta+4)^{\fr{4\beta+4}{3}}}{(4\beta+2)^{\fr{4\beta+2}{3}}}
\Big(\fr{b}{a}\Big)^{\fr{4\beta+4}{3}}
>>1,
\eeas
which proves (\ref{Kinetic high change}).

{\bf Step 3.} We prove that, after a modification on a $v$-null set, the bounded mild solution $f$ of Eq.(\ref{approximate equation 2}) obtained in {\bf Step 1} is a solution of Eq.(\ref{Equation}).
To do this we first derive a similar version of Duhamel's formula.
For every  $v\in {\R}^3\setminus Z$ (with $mes(Z)=0$) and every $t\ge 0$
 \beas
 &&f(t,v)=f_0(v)+\int_0^t \dd \tau\iint_{\bRS}B(v-v_*,\si)
 \big[f'f_*'(1+f_*)+ff'f_*'-ff_*(1+f'+f_*')\big]\dd \si \dd v_*
 \\
 &&\qquad\quad=f_0(v)+\int_0^t \wt{Q}^{+}(f)(\tau,v)\dd \tau-\int_0^tf(\tau,v)\wt{L}(f)(\tau,v)\dd \tau
 \eeas
 where
 \beqa\label{wtQ}&& \wt{Q}^{+}(f)(t,v)=\iint_{\bRS}B(v-v_*,\si)
 f'f_*'(1+f_*)\dd \si \dd v_*, \\
 \label{wtL}
 &&\wt{L}(f)(t,v)=\iint_{\bRS}B(v-v_*,\si)\big[f_*(1+f'+f_*')-f'f_*'\big]\dd \si \dd v_*.
 \eeqa
 Thus for every $v\in {\R}^3\setminus Z$ and for almost every $t\in [0,\infty)$
\beas
\fr{\p f(t,v)}{\p t}=\wt{Q}^{+}(f)(t,v)-f(t,v)\wt{L}(f)(t,v).
\eeas
As before,  for every $v\in {\R}^3\setminus Z$ the function
$t\mapsto \wt{L}(f)(t,v)$ is bounded and the function $t\mapsto \int_0^t\wt{L}(f)(\tau,v)\dd \tau$
is Lipschitz continuous hence $t\mapsto e^{\int_0^t\wt{L}(f)(\tau,v)\dd \tau}$
is Lipschitz on every bounded interval. It follows that
the function $t\mapsto e^{\int_0^t\wt{L}(f)(\tau,v)\dd \tau}f(t,v)$ is absolutely continuous on
every bounded interval and thus it holds  Duhamel's formula:
\beqa\label{Duhamel1}
f(t,v)=f_0(v)e^{-\int_0^t\wt{L}(f)(\tau,v)\dd \tau}+\int_0^te^{-\int_\tau^t\wt{L}(f)(\tau_1,v)\dd \tau_1}
\wt{Q}^{+}(f)(\tau,v)\dd \tau
\eeqa
for all $(t,v)\in [0,\infty)\times(\R^3\setminus Z)$.

To prove the continuity of the solution
 $f$ we also need the following proposition concerning the continuity of the collision integrals
 (\ref{wtQ}), (\ref{wtL}):

 \begin{proposition}\label{continuity about t,v}
 Let $B$ be the collision kernel satisfying $(\ref{condition of kernel})$,  $Q$ the collision operators defined in $(\ref{Q(f)})$. Let $0\le f_0\in L^1_3(\R^3)\cap L_3^{\infty}(\R^3)\cap C({\R}^3)$ satisfy $\int_{{\R}^3} v f_0(v){\rm d}v=0$ and  $(\ref{condition})$, and
 let $f$ with the initial datum $f_0$ be a mild solution of Eq.$(\ref{Equation})$ obtained in
 {\bf Step 1} of the proof of Theorem 1.1. Then
 $(t,v)\mapsto \wt{Q}^{+}(f)(t,v),\, (t,v)\mapsto \wt{L}(f)(t,v)$
are both continuous on $[0,\infty)\times {\R}^3$.
\end{proposition}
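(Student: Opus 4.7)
The proof plan is to first upgrade the mild solution $f$ of \textbf{Step 1} (initially defined only off a $v$-null set $Z$) to a representative continuous on $[0,\infty)\times\R^3$, and then obtain the continuity of $\wt{Q}^{+}(f)$ and $\wt{L}(f)$ by dominated convergence applied to (\ref{wtQ})-(\ref{wtL}). Preliminary ingredients: from \textbf{Step 1} of the proof of Theorem \ref{main results} we have the uniform bound $f\le K=\frac{1}{4\beta+2}$ on $[0,\infty)\times(\R^3\setminus Z)$, and Proposition \ref{L3} gives $\sup_{t\ge 0}\|f(t)\|_{L^1_3}\le C_2<\infty$. Combining these with $B(v-v_*,\sigma)\le b|v-v_*|\le b\la v\ra\la v_*\ra$ and Proposition \ref{proposition2} (applied directly to the quadratic piece $\iint B f'f_*'\dd\sigma\dd v_*$, and to the cubic piece $\iint Bf'f_*'f_*\dd\sigma\dd v_*$ after invoking $f_*\le K$) yields the uniform pointwise majoration
$$\wt{Q}^{+}(f)(t,v)+\wt{L}(f)(t,v)\le C(f_0)\la v\ra\qquad\forall\,(t,v)\in[0,\infty)\times\R^3,$$
as well as explicit integrable majorants for the integrands in (\ref{wtQ})-(\ref{wtL}) on $v$-bounded sets.

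The main obstacle is a circularity: continuity of $v\mapsto\wt{Q}^{+}(f)(t,v)$ requires evaluating $f$ pointwise at the continuously-varying points $v'(v,v_*,\sigma)$ and $v_*'(v,v_*,\sigma)$, which is legitimate only after $f$ itself has been shown continuous in $v$; but $f$'s continuity is, through the Duhamel formula (\ref{Duhamel1}), downstream of that of $\wt{Q}^{+}$ and $\wt{L}$. I would break this circle by a Picard iteration seeded at the continuous datum $f_0$: set $\bar{f}^{(0)}(t,v):=f_0(v)$ and inductively
$$\bar{f}^{(k+1)}(t,v):=f_0(v)e^{-\int_0^t\wt{L}(\bar{f}^{(k)})(\tau,v)\dd\tau}+\int_0^t e^{-\int_\tau^t\wt{L}(\bar{f}^{(k)})(\tau_1,v)\dd\tau_1}\,\wt{Q}^{+}(\bar{f}^{(k)})(\tau,v)\dd\tau.$$
Given continuity and a uniform $L^\infty$-bound on $\bar{f}^{(k)}$, a standard tail truncation in $v_*$ (to reduce to a bounded domain where dominated convergence applies, with the tail controlled uniformly in $v$ on compacts by the uniform $L^1_3$-bound) proves joint continuity of $(t,v)\mapsto\wt{Q}^{+}(\bar{f}^{(k)})(t,v)$ and $(t,v)\mapsto\wt{L}(\bar{f}^{(k)})(t,v)$ on $[0,\infty)\times\R^3$, hence of $\bar{f}^{(k+1)}$.

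To close the argument, the stability estimates underlying Proposition \ref{stable}, applied on a short time interval $[0,\Delta]$ where the iteration $\bar{f}^{(k)}\mapsto\bar{f}^{(k+1)}$ becomes a contraction in $L^\infty_tL^1_{2,v}$, yield convergence (uniform on compacts in $v$) to a continuous limit $\bar{f}$ on $[0,\Delta]\times\R^3$ satisfying (\ref{Duhamel1}); the uniqueness portion of Proposition \ref{stable} identifies $\bar{f}$ with $f$ almost everywhere, so after modifying $f$ on a null set it is continuous on $[0,\Delta]\times\R^3$. Iterating across successive intervals $[j\Delta,(j+1)\Delta]$ (the uniform-in-time bounds allow a common time-step $\Delta$) produces a continuous representative of $f$ on all of $[0,\infty)\times\R^3$. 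A final direct dominated-convergence argument in (\ref{wtQ})-(\ref{wtL}), with $f$ now continuous and uniformly bounded by $K$, then establishes the joint continuity of $\wt{Q}^{+}(f)$ and $\wt{L}(f)$ on $[0,\infty)\times\R^3$ claimed in the proposition.
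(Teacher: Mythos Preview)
Your plan has a genuine gap at the step where you pass continuity from the Picard iterates $\bar f^{(k)}$ to the limit: a contraction in $L^\infty_tL^1_{2,v}$ yields $L^1$-convergence only, which does \emph{not} imply uniform convergence on compact $v$-sets, so continuity of each $\bar f^{(k)}$ says nothing about continuity of $\bar f$. To rescue this you would need pointwise (in $v$) control of $|\wt Q^{+}(\bar f^{(k)})-\wt Q^{+}(\bar f^{(j)})|$ and $|\wt L(\bar f^{(k)})-\wt L(\bar f^{(j)})|$ in terms of $\|\bar f^{(k)}-\bar f^{(j)}\|_{L^1_2}$, and that is exactly the nontrivial estimate you are trying to avoid. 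There is a second, related gap in your ``tail truncation by the uniform $L^1_3$-bound'': the integrand of $\wt Q^{+}$ involves $f(v')f(v_*')$, not $f(v_*)$, so knowing $\int\la v_*\ra^3 f(v_*)\dd v_*<\infty$ does not by itself make $\iint_{|v_*|>R}B\,f'f_*'(1+f_*)\,\dd\sigma\dd v_*$ small; you need either a weighted $L^\infty$ bound on $f$ together with the energy identity $|v'|^2+|v_*'|^2=|v|^2+|v_*|^2$, or the change of variables of Proposition~\ref{proposition infty} to transfer the $v_*$-localisation to $v'$ or $v_*'$. Finally, the claim that Proposition~\ref{stable} furnishes a contraction for the Duhamel iteration map is not justified: that proposition compares two \emph{solutions}, not one iterate with the next, and $\wt L$ can be negative (it contains the term $-f'f_*'$), so the exponential factor in your recursion is not a priori bounded without propagating uniform $L^\infty$ bounds through the iterates --- which you assume but do not establish.

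The paper sidesteps the circularity you correctly identified, but in the opposite direction: it proves continuity of $\wt Q^{+}(f)$ and $\wt L(f)$ \emph{first}, directly and without any pointwise continuity of $f$, and only afterwards reads off continuity of $f$ from the Duhamel formula. The mechanism is to bound, for $|v|<R$,
\[
\big|\wt Q^{+}(f)(t,v)-\wt Q^{+}(f)(s,v)\big|\le C_{f_0,R}\,\|f(t)-f(s)\|_{L^1_2}^{1/5},
\]
obtained by an angular splitting $\{\sin(\theta/2)\ge\delta\}\cup\{\sin(\theta/2)<\delta\}$: on the first set Proposition~\ref{proposition infty} turns $\int\cdots f(v')\,\dd v_*$ into $\int\cdots f(v_*)\,\dd v_*$ at the price of $\delta^{-4}$, reducing everything to the $L^1$-difference $\|f(t)-f(s)\|_{L^1_1}$; on the second set the uniform $L^\infty$ and $L^1_2$ bounds give an $O(\delta)$ error; optimising in $\delta$ yields the $1/5$-H\"older estimate. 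Time continuity then follows from the elementary Lipschitz bound $\|f(t)-f(s)\|_{L^1_2}\le C_{f_0}|t-s|$. For $v$-continuity the paper exploits translation invariance --- $g(t,v):=f(t,v+h)$ is again a mild solution with datum $f_0(\cdot+h)$ --- and the stability estimate (\ref{Theorem proposition}) of Proposition~\ref{stable} to get $\sup_{t\le T}\|f(t,\cdot+h)-f(t,\cdot)\|_{L^1_2}\le C_T\|f_0(\cdot+h)-f_0\|_{L^1_2}\to 0$, and the same $\delta$-splitting converts this into $|\wt Q^{+}(f)(t,v+h)-\wt Q^{+}(f)(t,v)|\to 0$ uniformly on compacts. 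No iteration, no bootstrap, and no pointwise regularity of $f$ is used.
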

\vskip2mm

 A proof of this proposition will be given later. Right now we continue the proof of Theorem \ref{main results}. Let us define
\beqa\label{Duhamel2}
g(t,v)=f_0(v)e^{-\int_0^t\wt{L}(f)(\tau,v)\dd \tau}+\int_0^te^{-\int_\tau^t\wt{L}(f)(\tau_1,v)\dd \tau_1}\wt{Q}^{+}(f)(\tau,v)\dd \tau,\quad (t,v)\in[0,\infty)\times{\mR}^3.
\eeqa
We see from  Proposition \ref{continuity about t,v} that $0\le g\in C([0,\infty)\times\R^3)$.
Comparing (\ref{Duhamel2}) with (\ref{Duhamel1})
 we have $g(t,v)=f(t,v)$ for any $(t,v)\in [0,\infty)\times(\R^3\setminus Z)$ and $\sup\limits_{0\le t\le T}\|Q^{\pm}(g)(t)\|_{L^1}<\infty$, $\sup\limits_{0\le t\le T}\|Q^{\pm}(f)(t)\|_{L^1}<\infty$ for any $T>0$. Then we conclude with simple calculation that $Q(g)(t,v)=Q(f)(t,v)$ for a.e. $(t,v)\in [0,\infty)\times \R^3$.
This implies that
$
g(t,v)=f_0(v)+\int_{0}^{t}Q(g)(\tau,v){\rm d}\tau
$
for a.e. $(t,v)\in [0,\infty)\times \R^3$.
Since $\sup\limits_{0\le t\le T}|Q^{\pm}(g)(t,v)|<\infty$ for any $T\in [0,\infty)$ and any $v\in \R^3$,
it follows that
$t\mapsto \int_{0}^{t}Q(g)(\tau,v){\rm d}\tau$ is continuous on $[0,\infty)$. Combining this with $g\in C([0,\infty)\times \R^3)$ and Lemma \ref{Lemma H} implies that there exists a null set $Z_0\subset \R^3$ such that for all $(t,v)\in [0,\infty)\times(\R^3\setminus Z_0)$,  $g(t,v)=f_0(v)+\int_0^t Q(g)(\tau,v)\dd \tau$. From Proposition \ref{continuity about t,v} and $g\in C([0,\infty)\times \R^3)$ we see that $Q(g)=\wt{Q}^{+}(g)-g\wt{L}(g)\in C([0,\infty)\times \R^3)$.
This together with $g\in C([0,\infty)\times \R^3)$ and the fact that $\sup\limits_{0\le t\le T}|Q^{\pm}(g)(t,v)|<\infty$ for any $T\in [0,\infty)$ imply that
\beas
g(t,v)=f_0(v)+\int_{0}^{t}Q(g)(\tau,v){\rm d}\tau \quad \forall (t,v)\in [0,\infty)\times \R^3.
\eeas
Next from (\ref{about infty}) and
$\sup\limits_{t\in [0,\infty)}\|g(t)\|_{L_3^1}=\sup\limits_{t\in [0,\infty)}\|f(t)\|_{L_3^1}<\infty$,
we can make a similar calculation as done in the proof of Proposition \ref{continuity about t,v} to obtain that for every fixed $v\in \R^3$, the function $t\mapsto Q(g)(t,v)$ is continuous on $[0,\infty)$. It follows that
\beas
\fr{\p}{\p t}g(t,v)=Q(g)(t,v)\qquad \forall\,(t,v)\in [0,\infty)\times{\R}^3.
\eeas
Then by the continuity of $g$ and (\ref{about infty}) it is easily seen that $g$ satisfies the $L^{\infty}$ estimate (\ref{estimate infty}).
 Thus from Definition \ref{definition of solution} we see that $g$
 is a  solution of Eq.(\ref{Equation}) with initial datum $0\le g(0,\cdot)=f_0\in L^1_3({\bR})\cap L_{2}^{\infty}({\bR}) \cap C(\R^3)$.
To keep the same notation of solution as in the theorem, we now rewrite $g$ as $f$
and thus we have proved the global in time existence of a classical solution $0\le f\in L^{\infty}([0,\infty); L^1_3({\bR})\cap L^{\infty}({\bR}) \cap C(\R^3))$ of Eq.(\ref{Equation}) with the initial datum $0\le f_0\in L^1_3({\bR})\cap L^{\infty}_3({\bR}) \cap C(\R^3)$.

{\bf Step 4.} We prove that the classical solution $f$ obtained in {\bf Step 3} is unique
in $L^{\infty}([0,\infty); L^1_3({\bR})\cap L^{\infty}({\bR}) \cap C(\R^3))$.
But this follows easily from (\ref{Theorem proposition}).

To finish the proof of Theorem \ref{main results}, we now need only to finish the

{\bf Proof of Proposition {\ref{continuity about t,v}}.}
 In the following we denote $C_{*,*,...}$
to be any finite and positive constants that depend only on their arguments $*,*,...$, and they may have
different values in different lines.\, We divide the proof into four steps.

{\bf Step 1.} Let $C_{L^\infty}=C_{L^\infty}(f_0)$ be defined in (\ref{about infty}). Then, $\sup\limits_{t\le 0}\|f(t)\|_{L^{\infty}}\le C_{L^\infty}$.
Therefore
\beqa\label{Icase0}
&&
\|f(s)-f(t)\|_{L^1_2}\le \int_{s}^{t}{\rm d}\tau
\int_{{\R}^3}\big|Q(f)(\tau,v)\big|(1+|v|^2){\rm d}v
\no\\
&&\le  2(1+2C_{L^\infty})\int_{s}^{t}{\rm d}\tau
\int_{{\R}^3\times {\R}^3}|v-v_*|f(\tau,v)f(\tau,v_*)(1+|v|^2+|v_*|^2){\rm d}v{\rm d}v_*
\no\\
&&\le 2(1+2C_{L^\infty})\int_{s}^{t}\|f(\tau)\|_{L^1_3}^2{\rm d}\tau
\le  C_{f_0}|t-s|\qquad \forall\, 0\le s<t<\infty\eeqa
where we used $\sup\limits_{t\ge 0}\|f(t)\|_{L^1_3}\le \max\{1,C_1\}\|f_0\|_{L_3^1}$ (see
Proposition \ref{L3}).

{\bf Step 2.} Fix any $1\le R<\infty$.
Let $v\in \R^3$ satisfy $|v|<R $. For any $0<\delta <\fr{1}{2}$ and any $t,s\in [0,\infty)$, we compute
\beqa\label{Icase1}
&&\big|\wt{Q}^{+}(f)(t,v)-\wt{Q}^{+}(f)(s,v)\big|
\no\\
&&\le
(1+C_{L^{\infty}})\iint_{\bRS}B(v-v_*,\si)\Big(f(s, v_*')|f(s, v')-f(t, v')|+ f(t,v')|f(t, v_*')-f(s, v_*')|\no\\
&&\quad +
f(t, v')f(t, v_*')|f(s, v_*)-f(t,v_*)|\Big)\dd \si \dd v_*
\no\\
&&\le
b(1+C_{L^{\infty}}) \iint_{\bRS}|v-v_*|\Big(f(s, v_*')|f(s,v')-f(t,v')|+ f(t,v')|
f(t,v_*')-f(s,v_*')|\no\\
&& \quad +
f(t,v')f(t,v_*')|f(s,v_*)-f(t,v_*)|\Big)\dd \si \dd v_*
\no\\
&&\le
b(1+C_{L^{\infty}})\Big(2C_{L^{\infty}}\dt^{-4}4\pi\la v\ra \|f(s)-f(t)\|_{L^1_1}
+2C_0 C_{L^{\infty}}\la v\ra (\|f_0\|_{L^1}\|f_0\|_{L^1_2})^{1/2}\dt \no\\
&&\quad +C_{L^{\infty}}^2 4\pi \la v\ra \|f(s)-f(t)\|_{L^1_1}
\Big)
\no\\
&&\le C_{f_0, R}\Big(\dt^{-4}\|f(s)-f(t)\|_{L^1_2}+ \dt +\|f(s)-f(t)\|_{L^1_2}
\Big)
\eeqa
where we have used Proposition \ref{proposition infty} to obtain
\beas
&&\iint_{\bRS}|v-v_*|f(s, v_*')|f(s, v')-f(t, v')|\dd \si \dd v_*
=\iint_{\bRS}|v-v_*|f(s,v')|f(s, v_*')-f(t, v_*')|\dd \si \dd v_*
\\
&&=\iint_{{\R}^3\times{\mathbb S}^2}1_{\{\sin(\theta/2)\ge \dt\}}|v-v_*||f(s,v')-f(t,v')|
f(s, v_*'){\rm d}\si{\rm d}v_*\no\\
&&\quad +\iint_{{\R}^3\times{\mathbb S}^2}1_{\{\sin(\theta/2)< \dt\}}|v-v_*||f(s,v')-f(t,v')|
f(s, v_*'){\rm d}\si{\rm d}v_*
\\
&&\le C_{L^{\infty}}\iint_{{\R}^3\times{\mathbb S}^2}1_{\{\sin(\theta/2)\ge \dt\}}|v-v_*|
|f(s,v')-f(t,v')|{\rm d}\si{\rm d}v_*
\no\\
&&\quad +C_{L^{\infty}}\iint_{{\R}^3\times{\mathbb S}^2}1_{\{\sin(\theta/2)< \dt\}}|v-v_*|
f(s, v_*'){\rm d}\si{\rm d}v_*
\\
&&=C_{L^{\infty}}\iint_{{\R}^3\times{\mathbb S}^2}\fr{1}{\sin^3(\theta/2)}1_{\{\sin(\theta/2)\ge \dt\}}\fr{|v-v_*|}{\sin(\theta/2)}
|f(s,v_*)-f(t,v_*)|{\rm d}\si{\rm d}v_*
\\
&&\quad +C_{L^{\infty}}\iint_{{\R}^3\times{\mathbb S}^2}\fr{1}{\cos^3(\theta/2)}1_{\{\sin(\theta/2)< \dt\}}|v-v_*|
f_*(s){\rm d}\si{\rm d}v_*
\\
&&=C_{L^{\infty}}\dt^{-4}\iint_{{\R}^3\times{\mathbb S}^2}|v-v_*|
|f(s,v_*)-f(t,v_*)|{\rm d}\si{\rm d}v_*
+C_0 C_{L^{\infty}} \iint_{{\R}^3\times{\mathbb S}^2}1_{\{\sin(\theta/2)< \dt\}}|v-v_*|
f(s,v_*){\rm d}\si{\rm d}v_*
\\
&&\le C_{L^{\infty}}\dt^{-4}4\pi\la v\ra \|f(s)-f(t)\|_{L^1_1}
+C_0 C_{L^{\infty}}\la v\ra (\|f_0\|_{L^1}\|f_0\|_{L^1_2})^{1/2}\dt
\eeas
and
\beas
&&\iint_{\bRS}|v-v_*|f(t, v')f(t, v_*')|f(s, v_*)-f(t, v_*)|\dd \si \dd v_*
\le C_{L^{\infty}}^2 4\pi \la v\ra \|f(s)-f(t)\|_{L^1_1}.
\eeas
Minimizing the right hand side of (\ref{Icase1}) with respect to $\delta >0$ gives
$$
\sup_{|v|<R}\big|\wt{Q}^{+}(f)(t,v)-\wt{Q}^{+}(f)(s,v)\big|
\le C_{f_0, R}(\|f(s)-f(t)\|_{L^1_2})^{1/5}\qquad \forall\, s,t\in [0,\infty).$$
Following the similar argument we also have
\beqa\label{Icase2}
\sup_{|v|<R}\big|\wt{L}(f)(t,v)-\wt{L}(f)(s,v)\big|
\le C_{f_0, R}(\|f(s)-f(t)\|_{L^1_2})^{1/5}\qquad \forall\, s,t\in [0,\infty).\eeqa
From (\ref{Icase0}), (\ref{Icase1}) and (\ref{Icase2}) we can see that for any
$v\in \R^3$ with $|v|<R $, the functions  $t\mapsto\wt{Q}^{+}(f)(t,v)$ and $t\mapsto\wt{L}(f)(t,v)$ are
both uniformly continuous on $[0,\infty)$.

{\bf Step 3.} Fix any $0<T<\infty, t\in [0,T]$. For any $h\in \R^3$ with $|h|<1$,
since $B$ satisfies (\ref{about v-v*}), it is immediately verified that the velocity-translation $g(t,v):=f(t,v+h)$ is a mild solution to Eq.(\ref{Equation}) with
the initial datum $g_0(v)=f_0(v+h)$.
For $0<\delta <\fr{1}{2}$ and $|v|<R$, using Proposition \ref{proposition infty} and (\ref{Theorem proposition}) and doing a similar calculation as in {\bf Step 2} we have
\beas&&
\sup_{t\in [0,T], |v|<R}\big|\wt{Q}^{+}(f)(t,v+h)-\wt{Q}^{+}(f)(t,v)\big|=\sup_{t\in [0,T], |v|<R}\big|\wt{Q}^{+}(g)(t,v)-\wt{Q}^{+}(f)(t,v)\big|\\
&&
\le b(1+C_{L^{\infty}})\big(C_{f_0,R,T}\dt^{-4} \|f_0(\cdot+h)-f_0\|_{L^1_2}
+C_{f_0,R}\dt+C_{f_0, R, T}\|f_0(\cdot+h)-f_0\|_{L^1_2}
\big).\eeas
Minimizing the right-hand with respect to $\delta>0$ gives
\beqa\label{Icase3}
\sup_{t\in [0,T], |v|<R}\big|\wt{Q}^{+}(f)(t,v+h)-\wt{Q}^{+}(f)(t,v)\big|
\le C_{f_0, R, T}(\|f_0(\cdot+h)-f_0\|_{L^1_2})^{1/5}\qquad \forall\, |h|<1.
\eeqa
Following a similar argument we also have
\beqa\label{Icase4}
\sup_{t\in [0,T], |v|<R}\big|\wt{L}(f)(t,v+h)-\wt{L}(f)(t,v)\big|
\le C_{f_0, R, T}(\|f_0(\cdot+h)-f_0\|_{L^1_2})^{1/5}\qquad \forall\, |h|<1.
\eeqa
  From (\ref{go to zero}), (\ref{Icase3}) and (\ref{Icase4})
we see that for any  fixed  $T\in (0,\infty)$ and any $t\in[0, T]$,
the functions $v\mapsto \wt{Q}^{+}(f)(t,v)$ and $v\mapsto \wt{L}(f)(t,v)$ are
both uniformly continuous in $|v|<R$.

{\bf Step 4.} Finally we prove that $(t,v)\mapsto \wt{Q}^{+}(f)(t,v)$ is continuous on $[0,\infty)\times \R^3.$
Fix any $(t,v)\in [0,\infty)\times \R^3$. We have
\beqa\label{COT}
\big|{Q}^{+}(f)(s,u)-{Q}^{+}(f)(t,v)\big|
\le \big|{Q}^{+}(f)(s,u)-{Q}^{+}(f)(s,v)\big|
+\big|{Q}^{+}(f)(s,v)-{Q}^{+}(f)(t,v)\big|.
\eeqa
From {\bf Step 2} and {\bf Step 3} (taking for instance $R=1+|v|$) we see that
for $\forall \vep>0$ there exists
$0<\dt=\dt_{t,v}<1$ such that if $|s-t|<\dt$ (with $s\ge 0$) and $|u-v|< \dt$,
then $\big|\wt{Q}^{+}(f)(s,v)-\wt{Q}^{+}(f)(t,v)\big|<\vep/2$ and
$\big|\wt{Q}^{+}(f)(s,u)-\wt{Q}^{+}(f)(s,v)\big|<\vep/2$. From (\ref{COT}) we
conclude that $\wt{Q}^{+}(f)(\cdot,\cdot)$ is continuous at $(t,v)$.

This finishes the proof of Proposition {\ref{continuity about t,v}} and thus the proof of Theorem \ref{main results} is completed.$\hfill\Box$
\vskip4mm

{\bf Acknowledgment}. This work was supported by National Natural Science
Foundation of China Grant No. 11171173.

 \end{document}